\NeedsTeXFormat{LaTeX2e} 
\documentclass[a4paper,11pt]{amsart}

\usepackage[T1]{fontenc}
\usepackage[utf8x]{inputenc}
\usepackage{lmodern}
\usepackage{enumerate}
\usepackage{url}
\usepackage{comment}
\usepackage{amsfonts,amsmath,amstext,amsbsy,amssymb,amsbsy,
  amsopn,amsthm,amscd} 
\usepackage[T1]{fontenc}
\usepackage{hyperref}

\RequirePackage[dvipsnames]{xcolor} % [dvipsnames]
\definecolor{halfgray}{gray}{0.55}%chapter numbers will be semi
                                %transparent .5 .55 .6 .0
\definecolor{webgreen}{rgb}{0,0.5,0}
\definecolor{webbrown}{rgb}{.6,0,0} 
\definecolor{red}{rgb}{1,0,0} 
\hypersetup{%
  colorlinks=true, linktocpage=true, pdfstartpage=3,
  pdfstartview=FitV,%
  breaklinks=true, pdfpagemode=UseNone, pageanchor=true,
  pdfpagemode=UseOutlines,%
  plainpages=false, bookmarksnumbered, bookmarksopen=true,
  bookmarksopenlevel=1,%
  hypertexnames=true,
  pdfhighlight=/O,%hyperfootnotes=true,%nesting=true,%frenchlinks,%
  urlcolor=webbrown, linkcolor=RoyalBlue,
  citecolor=webgreen, %pagecolor=RoyalBlue,%
  % uncomment the following line if you want to have black links
  % (e.g., for printing)
  % urlcolor=Black, linkcolor=Black,
  % citecolor=Black, %pagecolor=Black,%
  pdftitle={Invariant Distributions for homogeneous flows},%
  pdfauthor={L. Flaminio, G. Forni, F. Rodriguez Hertz},%
  pdfsubject={2000 MAthematical Subject Classification: Primary:
    37-XX, 37C15, 37C40},%
  pdfkeywords={Cohomological Equations, Homogeneous flows},%
  pdfcreator={pdfLaTeX},%
  pdfproducer={LaTeX with hyperref}%
}
\usepackage{mathptmx}\usepackage{microtype}
\usepackage{tikz-cd}
\usepackage{wrapfig}  
\newif\iftikz

\ifcsname macrostupidino\endcsname%
\tikzfalse%
\else%
\tikztrue%
\fi%

\newtheorem{theorem}{Theorem}[section]
\newtheorem{lemma}[theorem]{Lemma}
\newtheorem{corollary}[theorem]{Corollary}
\newtheorem{proposition}[theorem]{Proposition}
\theoremstyle{definition}
\newtheorem{problem}{Problem}[section]

\newtheorem{conjecture}{Conjecture}[section]
\newtheorem{definition}[theorem]{Definition}

\newtheorem{remark}[theorem]{Remark}
\newcommand{\field}[1]{\mathbb{#1}}
\newcommand{\R}{\field{R}}
\newcommand{\N}{\field{N}}
\def\C{\field{C}}
\newcommand{\Z}{\field{Z}}
\newcommand{\Q}{\field{Q}}
\newcommand{\T}{\field{T}}
\newcommand{\ad}{\mathrm a\mathrm d}
\newcommand{\Ad}{\mathrm A\mathrm d}
\renewcommand{\>}{{\rangle}}
\newcommand{\<}{{\langle}}
\renewcommand{\|}{\,\Vert\,}
\renewcommand{\sl}{\mathfrak s \mathfrak l}
\newcommand{\SL}{\mathrm S \mathrm L}

\def\D{\hbox{d}}

\def\Aff{\operatorname{Aff}} 
\def\Aut{\operatorname{Aut}}
\def\Ggroup{\mathcal A} 
 % typical element of \Ggroup
\def\Gsubgp{\mathcal N} % normal subgroup of \Ggroup stable by \Hgroup
\def\Dlattice{\mathcal D} % quasi-lattice in \Gsubgp
\def\Hgroup{\mathcal H} % morally group of (inner) automorphims of \Ggroup
 % typical element of \Hgroup
 % given element of \aff{\Gsubgp\rtimes \Aut(\Gsubgp)}
 % one-param gp \psi
 % one-param gp \psi
\def\hatf{f} 
\def\Gphi{\widehat{\Gsubgp}} 

\def\biglattice{\widehat{\Dlattice}}

\def\Levi{\mathcal L}
\def\flow{\phi}
\def\map{\psi}

\def\diffeo{f}
\def\finitegp{\mathcal F}
\def\susp#1{\Sigma #1}
\def\nilpt{N}
\def\finitegp{\mathcal F}

\title[Invariant Distributions for homogenoeus actions]%
{Invariant Distributions for homogeneous flows \\ and affine transformations}
\author{L.~Flaminio}
\address[L. Flaminio]{Unit\'e Mixte de Recherche CNRS 8524 \\
  Unit\'e de Formation et Re\-cher\-che de Math\'ematiques\\
  Universit\'e de Lille\\
  F59655 Villeneuve d'Asq CEDEX\\
  FRANCE} 
\email{livio.flaminio@math.univ-lille1.fr} 
\author{G.~Forni}
\address[G.~Forni]{Department of Mathematics\\
University of Maryland\\ College Park, MD~20742,~U.S.A} 
\email{gforni@math.umd.edu} 
\author{F.~Rodriguez Hertz}
\address[F.~Rodriguez Hertz]{Mathematics Department\\
Penn State University\\
University Park\\ 
State College, PA~16802, U.S.A.} 
\email{hertz@math.psu.edu} 

\begin{document}

\begin{abstract}
  \begin{sloppypar}
    We prove that every homogeneous flow on a finite-volume
    homogeneous manifold has countably many independent invariant
    distributions unless it is conjugate to a linear flow on a
    torus. We also prove that the same conclusion holds for every
    affine transformation of a homogenous space which is not conjugate
    to a toral translation. As a part of the proof, we have that any
    smooth partially hyperbolic flow on any compact manifold has
    countably many distinct minimal sets, hence countably many
    distinct ergodic probability measures. As a consequence, the Katok
    and Greenfield-Wallach conjectures hold in all of the above cases.
  \end{sloppypar}
\end{abstract}
\maketitle

\section{Introduction}
\label{sec:1}
A smooth flow $\flow_t$ generated by a smooth vector field $X$ on a
compact manifold~$M$ is called \emph{stable} if the range of the Lie
derivative $\mathcal L_X: C^\infty(M) \to C^\infty(M)$ is closed and
it is called \emph{cohomology-free} or \emph{rigid} if it is stable
and the range of the Lie derivative operator has codimension one.  For
a smooth diffeomorphism $f$ on $M$, stability and rigidity are
analogously defined by considering the range of the operator $f^*-I :
C^\infty(M) \to C^\infty(M)$.  The properties of stability and
rigidity are easily seen to be invariant under smooth conjugacies, in
the case of either flows or diffeomorphisms, and under smooth
reparametrizations, in the case of flows.

The \emph{Katok (or Katok-Hurder) conjecture} \cite{K01}, \cite{K03},
\cite{H85} for flows states that every cohomology-free smooth flow is
smoothly conjugate to a linear flow on a torus with Diophantine
frequencies.  It is not hard to prove that all coho\-mology-free flows
are volume preserving and uniquely ergodic (see for instance
\cite{Forni}). An analogous conjecture can be stated for smooth
diffeomorphisms.

We also recall that the Katok conjecture for flows is equivalent to
the \emph{Greenfield-Wallach conjecture}~\cite{GW} stating that every globally
hypoelliptic vector field is smoothly conjugate to a Diophantine
linear flow (see \cite{Forni}). A smooth vector field $X$ is
called~\emph{globally hypoelliptic} if any $0$-dimensional current $U$
on $M$ is smooth under the condition that the current $\mathcal L_X U$
is smooth. Greenfield and Wallach in~\cite{GW} proved this conjecture
for smooth flows on compact surfaces, for homogeneous flows in
dimension $3$ and for homogeneous flows on \emph{compact} Lie groups
in all dimensions.  (The equivalence of the Katok and
Greenfield-Wallach conjectures was essentially proved already
in~\cite{CC00} as noted by the third author of this paper.  The
details of the proof can be found in~\cite{Forni}).

The best general result to date in the direction of a proof is the
joint paper of the third author \cite{HH} where it is proved that
every cohomology-free vector field has a factor smoothly conjugate to
Diophantine linear flow on a torus of the dimension equal to the first
Betti number of the manifold $M$. This result has been developed
independently by several authors \cite{Forni}, \cite{Kocsard},
\cite{Matsumoto} to give a complete proof of the conjecture in
dimension~$3$ and by the first author in the joint paper
\cite{FlaminioPaternain} to prove that every cohomology-free flow can
be embedded continuously as a linear flow in a possibly non-separated
Abelian group.

From the definition, it is clear that there are two main mechanisms
which may prevent a smooth flow from being cohomology-free: it can
happen that the flow is not stable or it can happen that the closure
of its range has codimension higher than one (or both).

Examples of stable flows or diffeomorphisms with range of infinite
codimension have been known for a long time: geodesic flows on
manifolds of negative curvature and in general transitive Anosov flows
and diffeomorphisms are perhaps the oldest. In this case, by the
Livsic theorem~\cite{Liv}, the joint kernel of all invariant measures
carried by periodic orbits coincides with the closure of the range of
the Lie derivative operator in the H\"older category. The Livsic
theorem has been generalized to the smooth or analytic category in
several cases~\cite{CEG},~\cite{GK}, \cite{LMM}.  For partially
hyperbolic diffeomorphisms A.~Katok and A.~Kono\-nenko \cite{MR1386840}
introduced a family of continuous functionals on $C^\alpha$ functions,
the Periodic Cycle Functionals (PCF's).  They showed that under a
local accessibility condition a $C^\alpha$ function is a coboundary,
modulo constants, if and only if it belongs to the kernel of all
PCF's.  A.~Wilkinson~\cite{Wilkinson} generalized this result to
accessible partially hyperbolic systems. Hence these systems are
$C^\alpha$ stable. To the best of our knowledge there is no proof in
the literature that PCF's span an infinite dimensional space or even
that they are always non trivial. By an elegant argument suggested by
A.~Katok in a personal communication it is possible to derive that
PCF's span an infinite dimensional space of distributions under the
condition that the system is not uniquely ergodic and that finite
linear combinations of PCF's are never measures. In Section~2 we show
that by Wilkinson's work \cite{Wilkinson} under the accessibility
condition Theorem~\ref{expandingminimal} implies as an easy corollary
that the Periodic Cycle Functionals span an infinite dimensional space
of continuous functionals on $C^\alpha(M)$ (see
Corollary~\ref{pcf_maps}) for all partially hyperbolic $C^1$
diffeomorphisms.  A similar statement holds for flows (see
Corollary~\ref{pcf_flows}). The proof depends on an extension of
Wilkinson's theorem on solutions of the cohomological equation to
flows. To the best of our knowledge such an extension is not in the
literature, however it follows quite easily from Wilkinson's results
for maps (see Theorem~\ref{cohomeq_PHflows}).

We also note that generalizations of Livsic theorem in the \emph{non
  accessible} case are due to Veech~\cite{Veech} and
Dolgopyat~\cite{Dolgopyat}.

Among non-hyperbolic systems, in fact among systems of parabolic type,
linear toral skew-shifts \cite{K01}, translation flows \cite{Forni97},
\cite{MMY}, horocycle flows \cite{FF1} and nilflows \cite{FF} are in
general \emph{uniquely ergodic} and stable, but have range of
countable codimension in the smooth category. Translation flows are a
special case as the range closure is finite dimensional in all spaces
of finitely differentiable functions \cite{Forni97}.

Flows and diffeomorphisms on compact manifolds with range closure of
codimension one in the space of smooth functions are
called~\emph{distributionally uniquely ergodic} (DUE) (see for
instance~\cite{AFK}). The motivation for this terminology comes from
the fact that DUE flows or diffeomorphisms can also be defined by the
condition that they are uniquely ergodic and that the space of all
invariant distributions is spanned by the unique invariant probability
measure. We recall that an \emph{invariant distribution} for a flow or
a diffeomorphism is a distribution (in the sense of S.~Sobolev and
L.~Schwartz) that is invariant under the natural action by
push-forward under the flow or diffeomorphism on the space of
distributions.  In the case of flows an equivalent definition requires
that the Lie derivative of the distribution along the flow vanishes in
the sense of distributions.

Linear flows on tori with Liouvillean frequencies are examples of
non-stable DUE flows; it is remarkable that distributional unique
ergodicity may coexists with a ``chaotic'' property such as
weak-mixing. In fact B.~Fayad~\cite{Fayad} has constructed examples of
mixing smooth time-changes of Liouvillean linear flows on tori.

Until recently there were no examples of DUE systems, except for toral
systems derived from \emph{linear} Liouvillean systems. In the past
few years several new examples of this kind have been found by
A.~Avila and collaborators.  Avila and A.~Kocsard \cite{AvilaKocsard}
have proved that all smooth circle diffeomorphisms with irrational
rotation number are DUE\@. Recently Avila, Fayad and Kocsard~\cite{AFK}
have constructed examples of DUE flows on certain higher dimensional
compact manifolds, not diffeomorphic to tori, which admit a
non-singular smooth circle action (hence Conjecture 6.1 of
\cite{Forni} does not hold).

\smallskip The goal of this paper is to prove that DUE examples do not
appear among non-toral \emph{homogeneous flows}, so that a non-toral
homogeneous flow always fails to be cohomology-free already because
the closure of its range has codimension higher than one. In fact, we
prove that for any homogeneous flow on a \emph{finite-volume}
homogeneous manifold $M$, except for the case of flows smoothly
isomorphic to linear toral flows, the closure of the range of the Lie
derivative operator on the space of smooth functions has countable
codimension, or, in other terms, the space of invariant distributions
for the flow has countable dimension. In particular, the Katok and
Greenfield-Wallach conjectures hold for general homogeneous flows on
finite-volume homogeneous manifolds.  Our main result can be stated as
follows.

\begin{theorem}
  \label{thm:Main}
  Let $G/D$ a connected finite volume homogeneous space. A homogeneous
  flow $(G/D, \flow_\R)$ is either smoothly isomorphic to a linear
  flow on a torus or it has countably many independent invariant
  distributions of bounded order (at most $\text{dim}(G/D) -1$ in the
  Sobolev sense, at most $1/2$ in the H\"older sense).
\end{theorem}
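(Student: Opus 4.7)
The plan is to argue by cases on the Jordan decomposition $X=X_s+X_n$ of the infinitesimal generator of the flow, with $\ad X_s$ semisimple, $\ad X_n$ nilpotent, and $[X_s,X_n]=0$, the cases being distinguished by the spectrum of $\Ad(g_t)=e^{t\,\ad X}$ on $\mathfrak g$.

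First I would treat the \emph{partially hyperbolic} case, in which $\ad X_s$ has a non-zero real eigenvalue, so $\Ad(g_t)$ has some eigenvalue off the unit circle. The generalized eigenspace decomposition of $\Ad(g_t)$ on $\mathfrak g$ then descends to a continuous $g_t$-invariant splitting of the tangent bundle of $G/D$ into stable, center, and unstable subbundles, making the flow uniformly partially hyperbolic. After the usual reduction to a compact invariant piece in the finite-volume (non-compact) situation, the result announced in the abstract---that every smooth partially hyperbolic flow on a compact manifold has countably many distinct minimal sets---yields countably many pairwise disjoint minimal sets, each supporting at least one ergodic invariant probability measure. These ergodic measures are mutually singular and hence linearly independent as invariant distributions of order zero.

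Next I would handle the \emph{quasi-unipotent} case, in which all eigenvalues of $\Ad(g_t)$ lie on the unit circle. Then $\exp(tX_s)$ has relatively compact image in $G$, with closure a torus $T\subset G$, while $u_t:=\exp(tX_n)$ is a unipotent one-parameter subgroup commuting with $T$. If $X_n\neq 0$, the unipotent flow $u_t$ on $G/D$ is non-trivial; decomposing $L^2(G/D)$ into $G$-irreducibles and extracting in each non-trivial summand a non-zero space of $u_t$-invariant distributional vectors, as in the scaling and Sobolev-space analyses of \cite{FF1} and \cite{FF}, produces countably many independent $u_t$-invariant distributions, which are a fortiori $g_t$-invariant. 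If $X_n=0$, the flow $g_t=\exp(tX_s)$ is isometric and its closure $\bar T$ is a compact abelian group of transformations of $G/D$: either $G/D$ coincides with a single $\bar T$-orbit, in which case $G/D$ is a torus and $g_t$ is linear on it (the exceptional alternative), or the orbit space is non-trivial and carries infinitely many linearly independent continuous functions whose pullbacks are smooth $g_t$-invariant functions, hence independent invariant distributions.

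The principal obstacle is the quasi-unipotent case with $X_n\neq 0$ on a general finite-volume homogeneous space whose group $G$ has simultaneously semisimple and unipotent directions. Here no single prior result is broad enough, and one must combine the $\sl(2,\R)$-type analysis of \cite{FF1} with the nilpotent analysis of \cite{FF}, while controlling the restriction of unitary irreducibles of $G$ to the subgroup generated by $X_s$ and $X_n$ and ensuring that infinitely many of them contribute a non-trivial space of $u_t$-invariant distributions to $L^2(G/D)$. A secondary technical point is the passage in the partially hyperbolic case from the finite-volume setting to a compact invariant piece on which the abstract's partially hyperbolic result applies directly.
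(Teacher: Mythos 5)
Your high-level case split (quasi-unipotent vs.\ partially hyperbolic, via the Jordan decomposition of $X$) does match the paper's, but two of the steps you wave at are precisely where the real work lies, and your proposed route through them would not go through.

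First, the ``usual reduction to a compact invariant piece'' in the non-compact finite-volume partially hyperbolic case does not exist. The result that a partially hyperbolic flow on a compact manifold has infinitely many minimal sets (Theorem~\ref{expandingminimal}) does not extend to finite-volume spaces by any soft argument, and the paper explicitly says it cannot prove that extension. What it does instead is split via the Levi decomposition: the solvable quotient is compact by Mostow's theorem (Theorem~\ref{thm:mostow}), so Theorem~\ref{expandingminimal} applies there; the semi-simple quotient is handled by an entirely different mechanism, the Kleinbock--Margulis theorem (Theorem~\ref{thm:kleinbock-margulis}) on Hausdorff dimension of bounded orbits avoiding a proper closed invariant set, which supplies infinitely many disjoint compact minimal sets directly (Corollary~\ref{cor:kleinbock-margulis}, Proposition~\ref{prop:5:2}). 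Calling this a ``secondary technical point'' understates it: without Kleinbock--Margulis the partially hyperbolic case is open.

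Second, for the quasi-unipotent case with $X_n\neq 0$ you correctly identify the obstacle but not the key structural tool that removes it. You propose to decompose $L^2(G/D)$ into $G$-irreducibles and analyze the restriction to the subgroup generated by $X_s$ and $X_n$; the paper instead never works with $G$-irreducibles for a general $G$. It first reduces, by the Levi decomposition and the Starkov--Brezin--Moore theorem (Theorem~\ref{thm:3:2}) together with Lemma~\ref{lem:3:3}, to an ergodic flow on either a semi-simple quotient $L/q(D)$ or a solvmanifold $R/R\cap D$. In the semi-simple quotient, the Jacobson--Morozov Lemma produces an $\sl_2(\R)$-triple commuting with the torus $T=\overline{\exp(\R X_s)}$; one then restricts to the infinite-dimensional subspace of $T\cdot Z(S)$-invariant $L^2$ functions, which decomposes into $\mathrm{PSL}_2(\R)$-irreducibles, each carrying a $u^t$-invariant distribution by~\cite{FF1} (Proposition~\ref{prop:4:2}). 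In the solvable quotient, Lemma~\ref{lem:4:3} shows the group is of class (I), so by Auslander--Starkov (Theorem~\ref{thm:auslander-starkov}) the flow is smoothly conjugate to a nilflow, and~\cite{FF} applies (Proposition~\ref{prop:4:4}). Without these two reductions the restriction problem you flag is genuinely hard, and your sketch leaves it unsolved. (A smaller point: the ergodicity reduction via Remark~\ref{rem:3:1}, based on the closedness of ergodic components, is also missing from your outline but is needed to make the $X_n=0$ sub-case rigorous.)
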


We can also prove an analogous theorem for affine diffeomorphisms.

\begin{theorem}
  \label{thm:Main_Affine}
  Let $G/D$ a connected finite volume homogeneous space.  An affine
  diffeomorphism $(G/D, \map)$ is either smoothly isomorphic to an
  ergodic translation on a torus or it has countably many independent
  invariant distributions of bounded order (at most $\text{dim}(G/D)$
  in the Sobolev sense, at most $1/2$ in the H\"older sense).
\end{theorem}

An important feature of our argument is that in the case of
\emph{partially hyperbolic} flows we prove the stronger and more
general result that any partially hyperbolic flow or diffeomorphism on
any compact manifold, not necessarily homogeneous, has infinitely many
distinct minimal sets (see Theorem~\ref{expandingminimal}). In
particular, we have a proof of the Katok and Greenfield-Wallach
conjectures in this case.  We are not able to generalize this result
to the finite-volume case. However, we can still prove that a
partially hyperbolic \emph{homogeneous} flow or an \emph{affine}
diffeomorphisms on a finite-volume manifold has countably many ergodic
probability measures (see Proposition~\ref{prop:5:2}).

In the non partially hyperbolic homogeneous case, that is, in the
quasi-unipotent case, by the Levi decomposition we are able to reduce
the problem to flows on semi-simple and solvable manifolds.  The
semi-simple case is reduced to the case of $\SL_2(\R)$ by an
application of the Jacobson--Morozov's Lemma which states that any
nilpotent element of a semi-simple Lie algebra can be embedded in an
$\sl_2(\R)$-triple. The solvable case can be reduced to the nilpotent
case for which our main result was already proved by the first two
authors in~\cite{FF}.  In both these cases the construction of
invariant distributions is based on the theory of unitary
representations for the relevant Lie group (Bargmann's classification
for $\SL(2,\R)$ and Kirillov's theory for nilpotent Lie groups).

The paper is organized as follows. In section~\ref{sec:2} we deal with
partially hyperbolic flows on compact manifolds.  In
section~\ref{sec:3} give the background on homogeneous flows that
allows us to reduce the analysis to the solvable and semi-simple
cases.  A further reduction is to consider quasi-unipotent flows
(sect.~\ref{sec:4}) and partially hyperbolic flows (sect.~\ref{sec:5})
on finite-volume non-compact manifolds; then the main theorem follows
easily (sect.~\ref{sec:6}).  Finally, in section~\ref{sec:7} we state
a general conjecture on the stability of homogeneous flows and a
couple of more general related open problems.

\smallskip {\bf Acknowledgments.}  The authors wish to thank A.~Katok
for several comments on the first version of this paper which led to a
significant improvement of the exposition and of the results.

L.~Flaminio was supported in part by the Labex~CEMPI
(ANR-11-LABX-07). G.~Forni was supported by NSF grant
DMS~1201534. F.~Rodriguez Hertz was supported by NSF grant
DMS~1201326.  L.~Flaminio would also like to thank the Department
Mathematics of the University of Maryland, College Park, for its
hospitality during the preparation of this paper.

%%% Local Variables: 
%%% mode: latex
%%% TeX-master: "InvDist_affine"
%%% End: 

\section{Partially hyperbolic flows and diffeomorphisms on compact
  manifolds}
\label{sec:2}

The goal of this section is to prove the following theorem.

\begin{theorem}
  \label{expandingminimal}
  Let $M$ be a compact connected manifold, $\flow_t$, $t\in\R$ or
  $t\in \Z$, an $\R$-action (a flow) or a $\Z$-action (a
  diffeomorphism) on $M$ and assume $\flow_t$ leaves invariant a
  foliation $\mathcal F$ with smooth leaves and continuous tangent
  bundle, e.g.\ the unstable foliation of a partially hyperbolic
  flow. Assume also that the action $\flow_t$ expands the norm of the
  vectors tangent to $\mathcal F$ uniformly. Then there are infinitely
  many different $\flow_t$-minimal sets.
\end{theorem}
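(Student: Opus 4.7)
I would argue by contradiction, supposing that $(\phi^t)$ admits only finitely many minimal sets $K_1,\ldots,K_N$. The strategy is to use the uniform expansion on $\mathcal F$ to produce a continuous family of orbits emanating from a given minimal set, and then to extract infinitely many distinct minimal sets by passing to $\omega$-limits, contradicting the finiteness assumption.

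The first key fact is that uniform expansion in forward time means exponential contraction in backward time along $\mathcal F$: for $x\in M$ and $y\in W^u_\epsilon(x)$ one has $d^{\mathcal F}(\phi^{-t}x,\phi^{-t}y)\le e^{-\lambda t}\epsilon\to 0$ as $t\to\infty$. Hence if $x$ lies in a minimal set $K$ and $y\in W^u_\epsilon(x)$, any backward-orbit cluster point of $x$ is also one of $y$, giving $\alpha(y)\supset\alpha(x)=K$ (using minimality of $K$). Fix such a pair with $K=K_1$ and $x_0\in K_1$, together with a nontrivial local leaf $W^u_\epsilon(x_0)$, which exists because $\mathcal F$ has smooth leaves of positive dimension with continuous tangent bundle.

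For each $y\in W^u_\epsilon(x_0)$ the forward $\omega$-limit $\omega(y)$ is a nonempty closed $\phi^t$-invariant set and thus contains some minimal set $K_{j(y)}\in\{K_1,\ldots,K_N\}$. The plan is to show that, as $y$ varies on $W^u_\epsilon(x_0)$, the set $\omega(y)$ cannot remain confined to any fixed finite collection. I would attempt this in two ways: either via a Baire-category or measure-theoretic argument on $W^u_\epsilon(x_0)$, partitioning the local leaf according to the (finitely many possible) values of $j(y)$ and using the expansion together with the continuity of $T\mathcal F$ to show that for a generic $y$ the orbit closure $\overline{\{\phi^t y : t\ge 0\}}$ is forced to contain a minimal set distinct from the $K_i$; or, alternatively, via the classical iteration $\phi^T(W^u_\epsilon(x_0))$, which produces arbitrarily long $\mathcal F$-pieces inside the compact manifold $M$, so that by topological/geometric considerations these pieces must accumulate at points outside $\bigcup_i K_i$ and provide new minimal sets in their forward orbit closures.

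The main obstacle is the asymmetry between the \emph{backward}-time control given by uniform expansion and the \emph{forward}-time information ($\omega$-limits) needed to extract new minimal sets: a direct shadowing or closing argument is unavailable because only one-sided hyperbolicity is hypothesized. A likely way around this is to convert uniform expansion into positive topological entropy via a volume-growth argument applied to $\phi^t$ restricted to local leaves of $\mathcal F$ (whose $\dim\mathcal F$-dimensional volume grows at least like $e^{\lambda t\cdot \dim\mathcal F}$), and then invoke a Katok-type horseshoe theorem for the time-one map $\phi^1$ to produce infinitely many hyperbolic periodic orbits, each of which is a minimal set. The technical subtleties lie in verifying the smoothness and nonuniform hyperbolicity hypotheses needed for Katok's theorem in the present generality, since only an expanding foliation (and not full partial hyperbolicity) is assumed.
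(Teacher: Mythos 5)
Your proposal identifies the right tension (backward-time control via contraction versus forward-time information needed for $\omega$-limits), but neither of your two proposed routes closes the argument, and the second one cannot work under the stated hypotheses. The Baire/measure-theoretic route is only a hope: you partition $W^u_\epsilon(x_0)$ by which minimal set $\omega(y)$ contains, but the conclusion you need --- that for generic $y$ the set $\omega(y)$ is actually \emph{disjoint} from every $K_i$ --- is exactly the content of the theorem, and no mechanism is offered for deriving a contradiction from the partition. Similarly, the ``long $\mathcal F$-pieces must accumulate outside $\bigcup_i K_i$'' observation is true but insufficient: a point of the leaf accumulating at $z\notin\bigcup_i K_i$ does not mean that the forward orbit of $z$ (or of some nearby leaf point) stays away from the $K_i$, which is what you would need to locate a new minimal set. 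The Katok-horseshoe route fails more fundamentally: the theorem assumes only the existence of a uniformly expanding invariant foliation with continuous tangent bundle; there is no hypothesis at all on the complementary directions, so there is no reason for a hyperbolic ergodic measure to exist. Even in the partially hyperbolic example mentioned in the statement, the central bundle may carry zero Lyapunov exponents, so Katok's closing/horseshoe theory does not apply. Moreover, positive topological entropy by itself does not yield infinitely many \emph{minimal} sets.

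The paper's proof takes a direct, constructive route rather than arguing by contradiction. It proves a ``keep-away'' lemma: given any finite set of points $p_1,\dots,p_k$ on distinct orbits and any open $W$, there is $q\in W$ and $\epsilon>0$ with $d(\phi^t q, p_i)\ge\epsilon$ for all $t\ge 0$ and all $i$. The mechanism is an inductive construction of nested leaf-discs $D_{n+1}\subset D_n\subset\mathcal F_r(x_0)$: when a forward iterate of a leaf-disc first touches a small product neighborhood $V_{\delta,r}(p_i)$, uniform expansion guarantees it has grown enough to contain a sub-disc lying in the annulus $V_{\delta,4r}(p_i)\setminus V_{\delta,2r}(p_i)$, and a continuity (disjointness) estimate then forces orbits from this sub-disc to avoid $V_{\delta,r}(p_i)$ for at least one more time unit; iterating gives avoidance for all forward time. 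Applying the lemma to representatives of $k$ already-found minimal sets and taking a minimal subset of $\omega(q)$ produces the $(k+1)$-st. Note that this argument only ever uses expansion along the foliation, with no hyperbolicity hypothesis transverse to it, which is precisely what makes it work in the stated generality and what your horseshoe approach cannot accommodate.
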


The existence of at least one non trivial (i.e.\ different from the
whole manifold) minimal sets goes back G.~Margulis (see
\cite{Starkov}, \cite{MR96k:22022}, \cite{Kleinbock-Shah-Starkov})

and Dani \cite{MR794799}, \cite{MR870710}. A similar idea was already
used by R.~Ma\~n\'e in \cite{MR516217} and more recently by A.~Starkov
\cite{Starkov}, and F. \& J.~Rodriguez Hertz and R.~Ures \cite[Lemma
A.4.2 (Keep-away Lemma)]{MR2390288}, in different contexts.

\smallskip Theorem~\ref{expandingminimal} for diffeomorphisms can be
derived from the result for flows by passing to a suspension and for
flows it will follow almost immediately from the next lemma.

\begin{lemma}
  \label{induct}
  Let $\flow_t$ be a flow like in Theorem~\ref{expandingminimal}. For
  any $k$-tuple $p_1,\dots, p_k\in M$ of points in different orbits
  and for any open set $W\subset M$, there exist $\epsilon>0$ and
  $q\in W$ such that $d(\flow_t(q),p_i)\geq\epsilon$ for all $t\geq 0$
  and for all $i=1,\dots,k $.
\end{lemma}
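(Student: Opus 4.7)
The plan is to pick a small $\mathcal F$-plaque $D$ inside $W$ and, using the uniform expansion of $\phi^t$ along $\mathcal F$, iteratively refine $D$ to a nested sequence of sub-plaques whose intersection contains the desired point $q$. First I would fix $q_0 \in W$, let $d = \dim \mathcal F$, $\lambda > 0$ be the uniform expansion rate, and take a smooth embedded plaque $D \subset W$ of some radius $r_0$ in the $\mathcal F$-leaf through $q_0$. It then suffices to find $q \in D$ with $p_i \notin \overline{\phi^{\R^+}(q)}$ for each $i = 1, \dots, k$; the uniform $\epsilon > 0$ will follow automatically from the compactness of the orbit closure inside $M$.

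\textbf{Main construction.} I would construct by induction on $n \in \N$ a decreasing sequence of open $\mathcal F$-plaques $D = D_0 \supset \overline{D_1} \supset D_1 \supset \overline{D_2} \supset D_2 \supset \cdots$ and increasing times $T_n \to +\infty$ such that, for a fixed $\epsilon > 0$ chosen at the outset,
\[
\phi^t(D_n) \cap B(p_i, \epsilon) = \emptyset \quad \text{for every } t \in [0, T_n] \text{ and } i = 1, \dots, k.
\]
Any point $q \in \bigcap_{n \geq 0} \overline{D_n}$ (nonempty by compactness of $M$) then solves the problem. The inductive step from $D_n$ to $D_{n+1}$ consists in removing the bad set
\[
\mathcal B_n := \{ q \in D_n : \phi^t(q) \in B(p_i, \epsilon) \text{ for some } i \text{ and } t \in [T_n, T_n+1]\}
\]
from $D_n$; the role of the uniform expansion is to show that $\mathcal B_n$ is thin enough inside $D_n$ that its complement still contains an open sub-plaque with closure in $D_n$. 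Quantitatively, the contraction $\|D\phi^{-t}|_{\mathcal F}\| \leq e^{-\lambda t}$ forces each preimage $\phi^{-t}(B(p_i, \epsilon)) \cap D_n$ to be contained in a tubular $\mathcal F$-strip of transverse width $O(\epsilon e^{-\lambda t})$; integrating over $t \in [T_n, T_n+1]$ and summing over the $k$ points bounds the total transverse leaf-measure of $\mathcal B_n$ in $D_n$ by a term of order $k \epsilon e^{-\lambda T_n}$, arbitrarily small for large $T_n$.

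\textbf{Main obstacle.} The hardest point will be to choose the sub-plaques $D_n$ so that their radii do not shrink to zero, so that $\bigcap_n \overline{D_n}$ is nonempty. This forces one to pick $\epsilon$ small at the outset so that the geometric-series losses coming from the successive $\mathcal B_n$ are summable and bounded by a fixed fraction of $r_0$. Even more delicate is to control the multiplicity with which the long plaque $\phi^t(D)$ may re-enter the small ball $B(p_i, \epsilon)$ between times $T_n$ and $T_n + 1$: the naive bound $O(\epsilon e^{-\lambda t})$ on the width of a single preimage strip must be combined with a count of such re-entries, itself controlled by the continuous transversality between $\mathcal F$ and the flow direction (a consequence of the expansion, since the flow direction is not expanded). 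This balance between exponential contraction and re-entry multiplicity is the core of the keep-away argument and is the common technical thread in the works of Margulis, Dani, Ma{\~n}{\'e}, Starkov, and F.\ \& J.\ Rodriguez Hertz--Ures cited just before the statement of the lemma.
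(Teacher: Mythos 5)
Your approach is different from the paper's, and the gap you flag at the end — controlling the re-entry multiplicity of $\phi^t(D_n)$ in $B(p_i,\epsilon)$ — is not a secondary technicality to be cleaned up; it is exactly the obstruction that the paper's argument is designed to avoid, and your proposal does not actually resolve it. Concretely: your bound on the leaf measure of $\mathcal B_n$ is obtained by pretending that the preimage $\phi^{-t}(B(p_i,\epsilon))\cap D_n$ is a single set of diameter $O(\epsilon e^{-\lambda t})$, but the enormously stretched plaque $\phi^t(D_n)$ may sweep through $B(p_i,\epsilon)$ many times (and, in fact, an unbounded number of times as $t$ grows, precisely because the flow is mixing/recurrent on leaves), so the true measure of $\mathcal B_n$ is (diameter)${}^{d}$ times a multiplicity you have not bounded. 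Invoking ``continuous transversality between $\mathcal F$ and the flow direction'' does not give a time-uniform multiplicity bound, in particular near $p_i$ if $p_i$ is a fixed or slow point. Without that bound, the geometric-series argument that keeps the radii of $D_n$ from shrinking to zero collapses, and so does the proof.

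The paper takes a route that sidesteps the multiplicity problem entirely, and I'd encourage you to compare. Instead of summing the measure of all bad times in a window, the paper only looks at the \emph{first} time $\tau_n$ at which the current plaque $\mathcal F_r(\phi^t(x_n))$ touches the transverse neighbourhood $V_{\delta,r}(p_i)$. Two facts are then used: (a) the disjointness/separation condition forces $\tau_n\ge 1$, so by the normalization (time rescaled so that one unit of time expands the leaf metric by a factor $\ge 4$) the image $\phi^{\tau_n}(\mathcal F_r(x_n))$ contains a leaf disc of radius $4r$ around $\hat x_n=\phi^{\tau_n}(x_n)$; and (b) the obstacle $V_{\delta,2r}(p_i)$ has leafwise radius only $2r$, so inside that big disc there is room to recenter at $x_{n+1}$ so that $\mathcal F_r(x_{n+1})\subset V_{\delta,4r}(p_i)\setminus V_{\delta,2r}(p_i)$, still inside $\phi^{\tau_n}(\mathcal F_r(x_n))$. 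Condition (i) then guarantees that the new plaque cannot hit $V_{\delta,r}(p_i)$ for the next unit of time, so the next $\tau_{n+1}\ge 1$ and the expansion resets the argument. Thus the $D_n=\phi^{-T_n}\mathcal F_r(x_n)$ are nested closed leaf discs of fixed original radius $r$ whose intersection gives $q$; nothing is removed, no measure is estimated, and no count of returns is needed. The price is a careful choice of the constants $r,\delta,r_0,r_1,r_2,\delta_0$, the normalization assumption, and an explicit treatment of the case where some $p_i$ is periodic with period $\le 1$, all of which the paper handles via conditions (i)--(iii).

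In short: your set-up (nested plaques, termination of the construction giving the desired $q$) is fine, but the core estimate you would need — a time-uniform multiplicity bound for re-entries — is missing and not easy to supply. The paper's ``first hitting time plus recentering'' trick is not an alternative implementation of your measure estimate; it is a fundamentally more economical argument that avoids having to prove it.
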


Let us show how Theorem~\ref{expandingminimal} follows from
Lemma~\ref{induct}.

\begin{proof}[Proof of Theorem~\ref{expandingminimal}]
  Since $M$ is compact there is a minimal set $K$. Assume now by
  induction that there are $K_1,\dots K_k$ different minimal set, then
  we will show that there is a minimal set $K_{k+1}$ disjoint from the
  previous ones. Let $p_i\in K_i$, $i=1,\dots, k$ be $k$ points and
  take $q$ and $\epsilon>0$ from Lemma~\ref{induct}. Since
  $d(\flow_t(q),p_i)\geq\epsilon$ for any $t\geq 0$ and for
  $i=1,\dots, k$ we have that for any $i=1,\dots, k$,
  $p_i\notin\omega(q)$, the omega-limit set of $q$. Since the $K_i$'s
  are minimal, this implies that $K_i\cap \omega(q)=\emptyset$ for
  $i=1,\dots, k$. Take now a minimal subset of $\omega(q)$ and call it
  $K_{k+1}$.
\end{proof}

Let $F=T\mathcal F$ be the tangent bundle to the foliation with fiber
at $x\in M$ given by $F(x)=T_x\mathcal F(x)$. We denote by $d$ the
distance on $M$ induced by some Riemannian metric. Let $X$ be the
generator of the flow $\flow_t$.  Let also
$E(x)=(F(x)\oplus\<X(x)\>)^{\perp}$ be the orthogonal bundle and
$\mathcal E_r(x)=\exp_x(B^E_{r}(x))$ be the image of the $r$ ball in
$E(x)$ by the exponential map.  Let $f$ be the dimension of the
foliation $\mathcal F$ and $m$ the dimension of $M$. For $r \le r_0$
the disjoint union $\mathcal E:=\sqcup_{x\in M} \mathcal E_r(x)$ is a
$(m-f-1)$-dimensional continuous disc bundle over~$M$. Denote with
$d_{\mathcal F}$ and $d_{\mathcal E}$ the distances along the leaves
of $\mathcal F$ and $\mathcal E$, and let
\[
\mathcal F_r(x)=\{ y\in \mathcal F(x) \mid d_{\mathcal F}(y,x) \leq
r\} \subset \mathcal F(x)
\]
be the $f$-dimensional closed disc centered at $x$ and of radius
$r>0$.  Clearly $d\le d_{\mathcal F}$ and $d\le d_{\mathcal E}$.

We may assume that the Riemannian metric on $M$ is adapted so that
$\mathcal F_r(x) \subset\flow_{-t} \mathcal F_r(\flow_t x)$ for all
$x\in M$ and $r,t\ge 0$. In fact if $g$ is a Riemannian metric such
that $\| (\flow_{t})_* v \|_g \ge C \lambda^t \|v\|_g$ for all $v \in
F(x)$, all $x\in M$ and all $t\ge 0$, (where $\lambda >1$), then
setting $\hat g = \int^{T_0}_0 (\flow_t)^* g \, \hbox{d} t$, with
$T_0= - \log_\lambda (C/2)$, we have that, for all $v \in F(x)$ and
$x\in M$, the function $\| (\flow_{t})_* v\|_{\hat g}$ is strictly
increasing with $t$.

We may choose $r_1< r_0$ such that if $r \le r_1$ then, for all $x\in
M$, we have $d_{\mathcal F}(y,z) \le 2 d(y,z)$ for any $y,z\in
\mathcal F_r(x)$ and $d_{\mathcal E}(y,z) \le 2 d(y,z)$ for any
$y,z\in \mathcal E_r(x)$.

For $x\in M$, let
\[
V_{\delta,r}(x)=\bigcup_{z\in \mathcal E_{\delta}(x)}\mathcal F_r(z).
\]
There exists $ r_2 \le r_1$ such that, if~$r$ and~$\delta$ are both
less than~$r_2$, then $V_{\delta, 4 r}(x)$ is homeomorphic to a disc
of dimension $(m-1)$ transverse to the flow.

\smallskip\noindent{\bf Normalization assumption}: After a constant
rescaling of $X$ we may assume that given any $x\in M$,
$z,y\in\mathcal F(x)$ and $t\geq 1$ we have $d_{\mathcal
  F}(\flow_t(z),\flow_t(y))\geq 4\,d_{\mathcal F}(z,y)$. Henceforth,
in this section, we shall tacitly make this assumption.

\begin{proof}[Proof of Lemma~\ref{induct}]
  Let $p_1,\dots, p_k\in M$ be points belonging to different orbits
  and let $W\subset M$ be an open set.  We shall find $r>0$ and a
  point $x_0 \in W$ with $ \mathcal F_r(x_0)\subset W$ and then
  construct, by induction, a sequence of points $x_n\in M$ and of
  iterates $\tau_n\geq 1$ satisfying, for some $\delta >0$, the
  following conditions
  \begin{equation}
    \label{eq:1}\tag{$A_{n+1}$}
    \flow_{-\tau_n}(\mathcal F_r(x_{n+1}))\subset \mathcal
    F_r(x_n), \qquad  \text{\ for all\ } n\ge 0,
  \end{equation}
  and
  \begin{equation}
    \label{eq:2}\tag{$B_n$} 
    \flow_{T_n}x\in \mathcal
    F_r(x_n)\implies \flow_t(x)\notin\hbox{$\bigcup_i$}V_{\delta,
      2r}(p_i)\,, \quad \text{\ for all\ } t
    \in [0, T_{n+1}).
  \end{equation} 
  Here we have set $T_n:=\sum_{k=0}^{n-1}\tau_k$. Then defining $D_n:=
  \flow_{-T_n}\mathcal F_r(x_n)$ we have $D_{n+1}\subset D_n\subset
  \mathcal F_r(x_0)$ and any point $q\in\bigcap_n D_n\subset \mathcal
  F_r(x_0)\subset W$ will satisfy the statement of the Lemma.

  By the choice of an adapted metric we have
  \[
  d_{\mathcal F} (\flow_t (x),\flow_t(p)) \ge d_{\mathcal F}(x,p) \,,
  \qquad \text{ for all } p\in M \text{ and all } x\in \mathcal F(p)
  \,.
  \]
  This implies that for all $i=1, \dots ,k$ any $r>0$ and any $t\ge 0$
  \begin{equation}
    \label{eq:leaf_incl} 
    x\in \mathcal F_{4r}(p_i)\setminus\mathcal F_{2r}(p_i) \implies
    \flow_t(x)\not \in \mathcal F_{2r}(\flow_t(p_i)).
  \end{equation}
  Hence there exists $\delta_0 < r_2$ such that for all $\delta <
  \delta_0$ and all $r\le r_2$ we have:
  \begin{enumerate}[(i)]
  \item\label{item1} for all $i\in \{1,\dots ,k\}$,
    \[
    \flow_{[0,1]}\big( V_{\delta, 4r}(p_i) \setminus V_{\delta,
      2r}(p_i)\big) \cap V_{\delta, r}(p_i)= \emptyset \,.  \]
  \end{enumerate}
  The above assertion follows immediately by continuity if $p_i$ is
  not periodic of minimal period less or equal to $1$. If $p_i$ is
  periodic of period less or equal to $1$, then it follows by
  continuity from formula~\eqref{eq:leaf_incl}.
  
  As the orbits of $p_1, \dots, p_k$ are all distinct and the set $W$
  is open, we may choose a point $x_0\in W$ and positive real numbers
  $r,\delta< \delta_0$ so that the following conditions are also
  satisfied:
  \begin{enumerate}[(i)]
    \setcounter{enumi}{1}
  \item\label{item2} $\mathcal F_r(x_0)\subset W$;
  \item\label{item3} for all $i, j \in \{1,\dots ,k\}$, with $i\not=
    j$,
    \[
    \flow_{[0,1]} \big( V_{\delta, 4r}(p_i)\big) \cap \flow_{[0,1]}
    \big( V_{\delta, 4r}(p_j)\big)= \flow_{[0,1]} \big( V_{\delta,
      4r}(p_i)\big) \cap \bigcup_{t\in [0,1]} \mathcal F_r(\flow_tx_0)
    =\emptyset\,.
    \]
  \end{enumerate}

  If for all $t>0$ we have $ \mathcal F_r(\flow_t(x_0))\cap
  \hbox{$\bigcup_i$}V_{\delta, r}(p_i) =\emptyset$, then
  $d(\flow_t(x_0), p_i)> r$ for all $i=1,\dots k$ and all $t>0$,
  proving the Lemma with $q=x_0$ and $\epsilon =r$. Thus we may assume
  that
  \[
  \tau_0:=\inf\Big\{t>0\;:\;\mathcal F_r(\flow_t(x_0))\cap
  \hbox{$\bigcup_i$}V_{\delta, r}(p_i) \neq\emptyset\Big\} < \infty
  \]
  and define
  \[ \hat x_0 = \flow_{\tau_0}(x_0).
  \]
  The above condition~\eqref{item3} implies that $\tau_0\ge 1$, hence
  by the normalization assumption it follows that
  \begin{equation}
    \label{eq:3}
    \mathcal F_{5r}(\hat x_0) \subset  \flow_{\tau_0}\big(\mathcal F_r(x_0)\big).
  \end{equation}

  Assume, by induction, that points $x_k\in M$ and iterates
  $\tau_k\geq 1$ satisfying the conditions~({$A_n$}) and~\eqref{eq:2}
  have been constructed for all $k\in \{0,\dots, n\}$, and assume that
  the point $\hat x_n:= \flow_{\tau_n}(x_n)\in M$ is such that
  $\mathcal F_r(\hat x_n)$ intersects non-trivially some disc $
  V_{\delta,r}(p_i) $.  Since $V_{\delta, r}(p_i)$ is saturated by
  $\mathcal F$, it follows that $\mathcal F_{2r}(\hat x_n)\cap
  \mathcal E_{\delta}(p_i)$ consists of a unique point $z_n$ with
  $d_{\mathcal F}(z_n, \hat x_n) \le 2 r$; we define $x_{n+1}\in
  \mathcal F(\hat x_n)$ as the point at distance $3r$ on the geodesic
  ray in $\mathcal F(\hat x_n)$ going from $z_n$ to $ \hat x_n$ (or
  any point on the geodesic ray issued from $z_n$ if $\hat
  x_n=z_n$). Then we have
  \begin{equation}
    \label{eq:4}
    \mathcal F_ r(x_{n+1})\subset \mathcal F_{4r}(
    \hat x_n )\cap V_{\delta,4r}(p_i)\setminus
    V_{\delta,2r}(p_i)\,.
  \end{equation}
  Since $\mathcal F_ r(x_{n+1})\subset V_{\delta, 4r}(p_i)$, for all
  $t\in (0,1)$ we have
  \[
  \mathcal F_r(\flow_t x_{n+1}) \subset \flow_t \mathcal F_r( x_{n+1})
  \subset \flow_{[0,1]}\big( V_{\delta,4r}(p_i) \setminus V_{\delta,
    2r}(p_i)\big)\,.
  \]
  By the disjointness conditions~\eqref{item1} and~\eqref{item3}, it
  follows that, for all $t\in (0, 1]$,
  \[\mathcal F_r(\flow_t x_{n+1})\cap \bigcup_{i=1}^k V_{\delta,
    r}(p_i) =\emptyset\,.\]

  It follows that if we define
  \[
  \tau_{n+1}:=\inf \Big\{t>0\;:\;\mathcal F_r(\flow_t(x_{n+1}))\cap
  \hbox{$\bigcup_iV_{\delta, r}(p_i)$}\neq\emptyset\Big\},\qquad \hat
  x_{n+1}= \flow_{\tau_{n+1}}(x_{n+1})
  \]
  (assuming $\tau_{n+1}<+\infty$), then $\tau_{n+1}\ge 1$, and by the
  normalization assumption and by the inclusion in
  formula~\eqref{eq:4} we have
  \[
  \mathcal F_{r}(x_{n+1}) \subset \mathcal F_{4r}(\hat x_{n})=
  \mathcal F_{4r}(\flow_{\tau_{n}} x_{n}) \subset
  \flow_{\tau_{n}}\big(\mathcal F_r(x_{n})\big)
  \]
  and by construction, having set $T_{n+2}:=\sum_{k=0}^{n+1}\tau_k$,
  we also have
  \[
  x\in D_{n+1}:=\flow_{-T_{n+1}}(\mathcal F_r(x_{n+1}))\implies
  \flow_t(x)\notin\hbox{$\bigcup_i$}V_{\delta, 2r}(p_i)\,, \quad
  \text{ for all } t \in [0, T_{n+2}).
  \]
  The inductive construction is thus completed.  As we explained above
  we have that~$(D_n)$ is a decreasing sequence of closed
  sub-intervals of $\mathcal F_r(x_0)$ and that any point $q\in
  \bigcap_n D_n$ satisfies $\flow_t(q) \notin \bigcup_i V_{\delta,
    r}(p_i)$ for all $t\ge 0$.

  The above inductive construction may fail if at some stage $n\geq 0
  $ we have $\tau_n = +\infty$. In this case let $q$ be any point in
  $\flow_{-T_n}(\mathcal F_r(x_n))$. Again such a point $q\in W$
  satisfies the statement of the Lemma, hence the proof is completed.
\end{proof}

\begin{corollary}
  \label{pcf_maps}
  Let $\map$ be a partially hyperbolic $C^1$ diffeomorphism on a
  compact connected manifold $M$. If the map $\map$ satisfies the
  accessibility condition, there exists $\alpha>0$ such that every
  H\"older invariant distribution of order at most $\alpha$ belongs to
  the closure (in the space $\mathcal D'(M)$ of all distributions on
  $M$) of the linear space spanned by an invariant measure and by the
  family of Periodic Cycle Functionals. In particular, the space
  spanned by all Periodic Cycle Functionals is infinite dimensional.
\end{corollary}

\begin{proof}
  By~\cite{Wilkinson} results there exists $\alpha$, $\beta \in (0,1)$
  with $\beta \geq\alpha$ such that a function $f \in C^\beta(M)$
  belongs to the joint kernel of all Periodic Cycle Functionals
  (PCF's) and of a $\map$-invariant measure $m$ if and only if $f$ is
  a coboundary with a continuous, hence $C^\alpha$, primitive (i.e.\
  transfer function).  It follows that $f$ belongs to the kernel of
  all H\"older invariant distributions of order at most $\alpha>0$.
  Since the space $C^\infty(M)$ is reflexive, by the Hahn-Banach
  theorem if a distribution $D \in \mathcal D'(M)$ does not belong to
  the closure in $\mathcal D'(M)$ of the linear space spanned by the
  invariant measure $m$ and by the family of all PCF's, then there
  exists a function $f\in C^\infty(M)$ such that $f$ has zero average
  with respect to $m$ and belongs to the kernel of all PCF's but
  $D(f)\not=0$.  However, $f$ is a coboundary with $C^\alpha$ transfer
  function, which implies that $D(f)=0$. This contradiction implies
  that the space $\mathcal D^\alpha_\map(M)$ of all H\"older invariant
  distributions of order at most $\alpha$ is a subset of the closure
  in the topology of $\mathcal D'(M)$ of the linear space spanned by
  the invariant measure $m$ and by the family of Periodic Cycle
  Functionals. Since by Theorem~\ref{expandingminimal} the partially
  hyperbolic diffeomorphism $\map$ has infinitely many distinct
  minimal sets (it follows by considering a suspension flow), the
  space $\mathcal D^\alpha_\map(M)$ is infinite dimensional for any
  $\alpha\geq 0$, hence the argument is concluded.
\end{proof}
  
In order to prove an analogous result for partially hyperbolic flows,
we extend Wilkinson's theorem on solutions of the cohomological
equation to the case of flows. As we shall see, it is a simple
corollary of the theorem for maps.
  
\begin{theorem}
  \label{cohomeq_PHflows}
  Let $\flow_t$ be a $C^1$ partially hyperbolic flow generated by a
  vector field $X$ on a compact manifold $M$. If the flow $\flow_t$
  satisfies the accessibility conditions, then for any $\beta\in
  (0,1)$ there exists $\alpha \in (0,1)$ such that any H\"older
  function $f \in C^\beta(M)$ which belongs to the joint kernel of all
  Periodic Cycle Functionals is cohomologous to a constant over the
  flow $\flow_t$ with a H\"older transfer function in $C^\alpha(M)$,
  that is, there exists a H\"older (transfer) function $u\in
  C^\alpha(M)$ and a constant $c\in \C$ such that the following
  cohomological equation holds (in the distributional sense)
  \[
  Xu = f-c \,.
  \]
  Conversely, if for any function $f\in C^\beta(M)$ there exists a
  continuous function $u\in C^0(M)$ and a constant $c\in \C$ such that
  the above cohomological equation holds, then $u\in C^\alpha(M)$,
  hence $f$ belongs to the kernel of all Periodic Cycle Functionals.
\end{theorem}
\begin{proof}
  For any $t>0$, let $\map_{(t)}$ denote the time-$t$ map of the flow
  $\flow_t$. The $C^1$ diffeomorphism $\map_{(t)}$ is partially
  hyperbolic and its stable and unstable foliations coincide with the
  stable and unstable foliations of the flow. It follows that
  $\map_{(t)}$ has the accessibility property and that the set of its
  stable-unstable paths coincides with the set of stable-unstable
  paths for the flow. Let $f\in C^\beta(M)$ belong to the kernel of
  all Periodic Cycle Functionals (PCF's) for the flow. By the above
  remark and by the definition of the PCF's, it follows that for any
  $t >0$ the function
  \[
  f_t := \int_0^t f \circ \flow_s\,\D s
  \]
  belongs to the kernel of all PCF's for the time-$t$ map
  $\map_{(t)}$.  Let $m$ be any invariant measure for the flow
  $\flow_t$, hence for all its time-$t$ maps. By Wilkinson's theorem
  \cite{Wilkinson}, there exists a unique function $u_t \in C^0(M)$,
  of zero average with respect to $m$, and a constant $c_t\in \C$ such
  that
  \[
  f_t -c_t = u_t \circ \map_{(t)} -u_t\,.
  \]
  We claim that for all $t>0$ we have $u_{2t}= u_t$ and $c_{2t}=
  2c_t$. In fact,
  \[
  \begin{aligned}
    f_{2t} &= f_t + f_t \circ \map_{(t)} = u_t \circ \map_{(t)} -u_t  +c_t + (u_t \circ \map_{(t)} -u_t +c_t )\circ \map_{(t)}     \\
    &= u_t \circ\map_{(t)}-u_t + (u_t \circ \map_{(t)}-u_t)\circ
    \map_{(t)} + 2c_t= u_t \circ \map_{(2t)}-u_t + 2 c_t \,.
  \end{aligned}
  \]
  It follows by the uniqueness of the solution that $u_{2t}= u_t$ and
  $c_{2t}= 2c_t$ as claimed.  By the above claim it follows that for
  all $n\geq 0$ we have
  \[
  u_{1/2^n} = u_1 \quad \text{ and } \quad c_{1/2^n} = c_1 / 2^n \,.
  \]
  We can therefore write, after multiplying on both sides by the
  factor $2^n$,
  \[
  2^n \int_0^{1/2^n} f \circ \flow_s \,\D s \, - \, c_1 = 2^n (u_1
  \circ \map_{(1/2^n)} - u_1) \,.
  \]
  By taking the limit as $n\to +\infty$ (in the sense of
  distributions), we finally derive the equation
  \[
  f - c_1 = X u_1 \,.
  \]
  We have thus proved that the cohomological equation for the flow has
  a solution $(u,c) =(u_1, c_1)$ with a continuous transfer function
  $u\in C^0(M)$. In order to prove that the transfer function is in
  fact H\"older, we argue that under the assumption that the function
  $f \in C^\beta (M)$, the transfer function $u\in C^\alpha(M)$. In
  fact, the above cohomological equation for the flow implies, by
  integration along the flow up to time $t=1$, the following
  cohomological equation for the time-$1$ map:
  \[
  f_1 - c = u \circ \map_{(1)} -u\,.
  \]
  It follows then by Wilkinson's theorem~\cite{Wilkinson}, since the
  time-$1$ map is partially hyperbolic and satisfies the accessibility
  condition, the function $f\in C^\beta(M)$, hence the integrated
  function $f_1\in C^\beta(M)$ as well, and the transfer function
  $u\in C^0(M)$, that in fact $u \in C^\alpha(M)$.  Finally, we note
  that if the above cohomological equation for the flow has a H\"older
  solution, then the function $f\in C^\beta(M)$ belongs to the kernel
  of all PCF's (for the flow) since all PCF's are invariant
  functionals, bounded on any H\"older space, which vanish on constant
  functions. The argument is therefore completed.
\end{proof}
From the above theorem and from Theorem~\ref{expandingminimal} we can
then derive the following corollary, whose proof is entirely analogous
to that of Corollary~\ref{pcf_maps}.
\begin{corollary}
  \label{pcf_flows}
  Let $\flow_t$ be a partially hyperbolic $C^1$ flow on a compact
  connected manifold $M$. If the flow $\flow_t$ satisfies the
  accessibility condition, there exists $\alpha>0$ such that every
  H\"older invariant distribution of order at most $\alpha$ belongs to
  the closure (in the space $\mathcal D'(M)$ of all distributions on
  $M$) of the linear space spanned by an invariant measure and by the
  family of all Periodic Cycle Functionals. In particular, the space
  spanned by all Periodic Cycle Functionals is infinite dimensional.
\end{corollary}

%%% Local Variables: 
%%% mode: latex
%%% TeX-master: "InvDist_affine"
%%% End: 

\section{Homogeneous flows and affine diffeomorphisms}
\label{sec:3}

Henceforth $G$ will be a connected Lie group and $G/D$ a finite volume
space; this means that $D$ is a closed subgroup of $G$ and that $G/D$
has a finite $G$-invariant (smooth) measure. The group $D$ is called
the \emph{isotropy group} of the space $M=G/D$. As we are only
interested in the quotient space $M$, we may assume that $G$ is simply
connected and that the isotropy group $D$ is a \emph{quasi-lattice},
i.e.\ that the largest connected normal subgroup of $D$ is reduced to
the identity.

Let $\mathfrak g$ be the Lie algebra of $G$. The exponential map $\exp
\colon \mathfrak g \to G$ sets up a bijective correspondence between
elements $X\in \mathfrak g$ and one-parameter subgroups $(\flow_t=\exp
tX)_{t\in \R}$. We denote a one-parameter subgroup $(\flow_t)_{t\in
  \R}$ of $G$ by $\flow_\R$. The flow generated by left translations
by this one-parameter subgroup on the finite volume space $G/D$ will
be denoted $(G/D, \flow_\R)$ or simply $\flow_\R$.

If $G$ is simply connected, then the group $\Aut(G)$ of Lie group
automorphisms of~$G$ is identified with the algebraic group
$\Aut(\mathfrak g)$ of Lie algebra automorphisms of~$\mathfrak g$, via
the map associating to an automorphism its differential at the
identity. In this case we shall not make a distinction between these
groups.

For any group $G$ we denote its center by $Z(G)$. For any element $x$
of a group $G$ we let $\operatorname{Int}(x)\in \Aut(G)$ be the
\emph{inner automorphism} given by the conjugation by $x$. For any
normal subgroup $H\vartriangleleft G$, we denote by
$\operatorname{Int}_H(x)$ the automorphism of $H$ given by the
conjugation by $x\in G$. The center of $Z(G)$ is precisely the kernel
of the map $\operatorname{Int} \colon G\to \Aut(G)$. The \emph{adjoint
  representation} $\Ad\colon G \to \Aut(\mathfrak g)$ is defined by
letting $\Ad(g)$ be the differential of the inner automorphism $
\operatorname{Int}(g)$ at the identity of~$G$.

An \emph{affine map} of a Lie group $G$ is the composition of a
continuous automorphism~$A$ of~$G$ and a (left) translation by an
element of~$G$. We denote by $\map= u A$ the affine map defined by
$\map(x)=uA(x)$ for all $x\in G$, where $u\in G$ and $A\in
\Aut(G)$. Affine maps form a group under composition which may be be
identified to the semi-direct product $\Aff(G):=\Aut(G)\rtimes G$. As
$G$ is a normal subgroup of $\Aff(G)$ conjugation by the affine map
$\map=u A$ yields an automorphism of $G$, which is easily seen to be
given by $\operatorname{Int}_G(\map):=\operatorname {Int}(u)\circ A$.

An affine map $\map= uA$ induces a smooth quotient map of $G/D$ if and
only if $A(D)\subset D$ and it induces a diffeomorphism of $G/D$ if
and only if the equality $A(D)=D$ holds true. We call \emph{affine
  diffeomorphism of $G/D$} a diffeomorphism of $G/D$ induced by an
affine map of $G$ and denote by $\Aff(G/D)$ the group of such
diffeomorphisms. The group $\Aff(G/D)$ of affine diffeomorphism of
$G/D$ is a quotient group of the group $\Aff(G)$ of affine maps
of~$G$.

\begin{lemma}
  \label{lem:homogflow3:1}
  Let $G/D$ be a finite volume space with $D$ a quasi-lattice. Let
  $\map=uA$ be an affine map of $G$ projecting to an affine
  diffeomorphism $\bar \map$ of $G/D$ and let $\flow_\R$ a
  one-parameter subgroup of $G$. Then
  \begin{itemize}
  \item The flow by left translations by $\flow_\R$ on $G/D$ commutes
    with the map $\bar \map$ if and only if the subgroup $\flow_\R$ is
    fixed by the automorphism $\operatorname{Int}(u)\circ A\in
    \Aut(G)$.
  \item The affine diffeomorphism $\bar \map$ is the identity on $G/D$
    if and only if the map~$\map$ is the right translation by an
    element in $ D$, i.e.\ if and only if $u=\gamma$ and
    $A=\operatorname{Int}(\gamma^{-1})$, for $\gamma\in D$. Thus the
    map
    \[
    \bar \map \in \Aff(G/D) \to \operatorname{Int}_G(
    \map)=\operatorname{Int}(u)\circ A \in \Aut(G)
    \]
    is well defined group homomorphism, as the right hand side does
    not depend on the choice of the affine map $\map=uA\in \Aff(G)$
    projecting to $\bar \map$.
  \end{itemize}
\end{lemma}
\begin{proof}
  We have $\flow_t u A(x) D =u A( \flow_t x)D$ for all $x\in G$ and
  all $t\in \R$, if and only if $A( \flow_{-t}) u^{-1}\flow_t u \in y
  D y^{-1}$ for all $y\in G$ and all $t\in \R$. But $D$ is a
  quasi-lattice of $G$, hence it does not have any non-trivial
  connected normal subgroups. The first statement of the lemma
  follows.

  The map $\bar \map$ is the identity on $G/D$ if and only if it
  commutes with every one-parameter subgroup of $G$. By the previous
  statement, this condition is equivalent to the identity
  $A=\operatorname{Int}(u^{-1})$. Thus $xD=\bar \map (xD) = x u D$ for
  all $x\in G$. This implies that $u\in D$.
\end{proof}

The conclusion of the above lemma may be stated by saying that, if $D$
is a quasi-lattice, the group $\Aff(G/D)$ of affine diffeomorphism of
$G/D$ is isomorphic to the ``adjoint group''
$\{\operatorname{Int}_G(\map)\in \Aut(G)\mid \map \in
\Aff(G)\}$. Hence we obtain

\begin{corollary}
  \label{cor:3homogflow:1}
  Let $G/D$ be a finite volume space with $D$ a quasi-lattice.  Any
  finite subgroup of affine map of the homogenous space $G/D$, acts as
  a finite group of automorphims on the Lie algebra of $G$.
\end{corollary}

Our results for affine diffeomorphisms will be derive from the
corresponding results for flows by the following method. By a
modification of a construction of S.~G.~Dani~\cite{MR0444835} it is
possible to reduce the general case of affine diffeomorphisms to the
case of homogeneous flows up to the action of a finite subgroup.

Invariant distributions for affine maps are then obtained from
invariant distributions for homogenous flows. This reduction is based
on the fundamental well-known fact that the spaces of invariant
distributions for a diffeomorphism and for its suspension flow are
isomorphic. We include a proof below for the convenience of the
reader.

\begin{definition}
  \label{def:suspension}
  The \emph{suspension flow} of a diffeomorphism $\map$ on a smooth
  manifold $M$ is a flow $\{ \flow_t\}$ on a manifold $ \susp{M}$ such
  that $M \subset \susp{M}$ is transverse to the flow and the
  map~$\map$ coincides with the first return map to $M$ and with the
  time-$1$ map of the flow $\{ \flow_t\}$ on $ \susp{M}$ (in other
  terms the return time function is constant and equal to $1$).
\end{definition}

The suspension flow is unique up to diffeomorphism and can be
constructed as follows.  Let $ \susp{M}$ denote the quotient of the
product $M\times \R$ with respect to the equivalence relation defined
as
\[
(x,r) \sim_\map (\map^{-1}(x), r+1) \,, \quad \text{ for all } \,
(x,r) \in M\times \R\,.
\]
Let $\{ \flow_t\}$ be the projection to $ \susp{M}= M\times \R/
{\sim}_\map $ of the `vertical' flow on $M\times \R/{\sim}_\map$, that
is, of the flow $\{ \flow_t \}$ defined as
\[
\flow_t (x, r) = (x, r+t) \,, \quad \text{ for all } \, t\in \R\,
\text{ and } \,(x,r) \in M\times \R\,.
\]
The flow $\{ \flow_t\}$ is generated by the vector field $ V$ on
$\susp{M}$ obtained by projection of the `vertical' vector field
$(0,\partial/\partial r)$ on $M\times \R$.  The manifold $M$ is
diffeomorphic to the manifold $M\times \{0\} / {\sim}_\map$, by the
composition of the inclusion $M \to M\times \{0\}$ and of the
projection $M\times \R\to \susp{M}$, and the flow $\{ \flow_t\}$ is
the suspension flow of the diffeomeorphism $f$ as a map on $M\times
\{0\} / {\sim}_\map$.

\begin{proposition}
  \label{prop:3homogflow:1}
  Let $(\susp{M}, \flow_t)$ be the suspension flow of a diffeomorphism
  $\map$ of a smooth manifold $M$. Then the space of $\map$-invariant
  distributions on $M$ of a given Sobolev or H\"older order (resp.\ of
  $\map$-invariant measures on $M$) is isomorphic to the space of
  $\flow_t$-invariant distributions on $\susp{M}$ of the same Sobolev
  or H\"older order (resp.\ of $\flow_t$-invariant measures on
  $\susp{M}$).
\end{proposition}

The statement about measures in the above Proposition follows
immediately from the above construction of the suspension flow. The
complete proof of the Proposition will follow from the
Lemma~\ref{lem:3homogflow:1} below.

Let $M$ be a manifold and let $ \susp{M}$ be the suspension space as
defined above. For any function $\theta\in C_0^\infty(-1/2, 1/2)$,
defined on $\R$, such that
\begin{equation}
  \label{eq:cut-off}
  \int_{-1/2}^{1/2} \theta (r) dr = 1\,,
\end{equation}
we define a continuous linear operator $ E_\theta: C_0^\infty(M) \to
C_0^\infty( \susp{M})$ as follows.  For any $f\in C_0^\infty(M)$, let
\begin{equation}
  \label{eq:Eop}
  \widetilde E_\theta (f) (x,r) = \sum_{n\in \Z}  f \circ \map^{-n} (x) \theta(r+n) \,, \quad \text{\ for all\ } (x,r)\in 
  M\times \R\,.
\end{equation}
Since $\theta\in C_0(-1/2,1/2)$ the above sum is finitely supported
and defines a function $\widetilde E_\theta (f)\in
C^\infty(M\times\R)$, which is constant on all equivalence classes of
the relation~$\sim_\map$, hence descends to a function $E_\theta(f)
\in C^\infty(\susp{M})$.  It is immediate from the definition that the
function $E_\theta(f) $ has compact support and that
\[
E_\theta: C_0^\infty(M) \to C_0^\infty(\susp{M})
\]
is a continuous linear operator.  By the above definition, it is also
clear that $E_\theta$ extends to a linear continuous operator from
Sobolev and H\"older spaces of functions on $M$ to Sobolev,
respectively H\"older, spaces of functions on $\susp{M}$ of the same
order.

Let $E^\ast_\theta : \mathcal D'(\susp{M}) \to \mathcal D'(M)$ denote
the dual operator on distributions, which is defined as follows:
\[
E^\ast_\theta (D) (f) = D (E_\theta(f)) \,, \quad \text{ for all }
f\in C_0^\infty(M)\,.
\]
Since the operator $E_\theta$ extends to a linear continuous operator
on Sobolev and H\"older spaces of functions and it preserves the
order, its dual $E^\ast_\theta$ maps Sobolev and H\"older spaces of
distributions on $\susp{M}$ to Sobolev, respectively H\"older, spaces
of distributions on $M$ of the same order.

\begin{lemma}
  \label{lem:3homogflow:1}
  The restriction of the operator $E^\ast_\theta$ to the subspace
  $\mathcal I_V(\susp{M})$ of distribution in $\subset \mathcal
  D'(\susp{M})$ invariant under the suspension flow $\{ \flow_t\}$ is
  a continuous linear operator $E^\ast$ which does not depend on the
  choice of the function $\theta\in C^\infty_0(-1/2,1/2)$ satisfying
  the conditions in formula~\eqref{eq:cut-off}. Moreover, the operator
  $E^\ast: \mathcal D'(\susp{M}) \to \mathcal D'(M)$ induces an
  isomorphism between the space $\mathcal I_V(\susp{M})$ and the
  subspace $\mathcal I_\map(M)$ of distributions invariant under the
  diffeomorphism $\map$ on $M$, which preserves the Sobolev as well as
  the H\"older order of invariant distributions.

\end{lemma}
\begin{proof}
  Let $\theta_1 $, $\theta_2\in C_0^\infty(-1/2,1/2)$ be any two
  functions such that
  \[
  \int_{-1/2}^{1/2} \theta_1 (r) dr = \int_{-1/2}^{1/2} \theta_2 (r)
  dr = 1 \,.
  \]
  We claim that, for any $f\in C_0^\infty(M)$, the function
  $E_{\theta_1} (f) - E_{\theta_2}(f)$ is a smooth coboundary for the
  suspension flow. In fact, since $\theta_1-\theta_2$ has zero average
  on $(-1/2, 1/2)$ there exists a smooth function $\chi\in
  C_0^\infty(-1/2, 1/2)$ such that
  \[
  \theta_1-\theta_2 = \frac{d\chi}{dr}\,.
  \]
  Let $\widetilde F \in C^\infty(M\times \R)$ be the function defined
  as
  \[
  \widetilde F(x,r):=\sum_{n\in \Z} f \circ \map^{-n} (x) \chi(r+n)
  \,, \quad \text{ for all } (x,r)\in M\times \R\,.
  \]
  The function $\widetilde F$ is well-defined and it projects to a
  smooth function $F\in C_0^\infty( \susp{M})$.  In addition, from the
  identity
  \[
  \widetilde E_{\theta_1} (f) (x,r) - \widetilde E_{\theta_2} (f)(x,r)
  = \frac{d \widetilde F}{dr} (x,r) \,, \quad \text{ for all }
  (x,r)\in M\times \R\,,
  \]
  it follows by projection that $E_{\theta_1} (f)- E_{\theta_2}(f) = V
  F$, as claimed.  For any distribution $D\in \mathcal D'(\susp{M})$
  invariant under the suspension flow and for all $f\in C^{\infty}(M)$
  we then have
  \[
  \left( E^\ast_{\theta_1}( D) - E^\ast_{\theta_2}(D)\right) (f) = D
  \left (E_{\theta_1} (f) - E_{\theta_2} (f)\right) = D (V F) =0\,.
  \]

  We have thus proved that the restriction of $E^\ast_\theta$ to the
  subspace of invariant distributions for the suspensionf flow is a
  continuous linear operator $E^\ast$ independent of the choice of the
  function $\theta \in C_0^\infty(-1/2, 1/2)$ with integral equal to
  $1$.

  \smallskip Next we prove that the linear operator $E^\ast_\theta$
  maps the subspace $\mathcal I'_V(\susp{M})$ of invariant
  distributions for the suspension flow $\flow_t$ into the subspace
  $\mathcal I'_\map(M)$ of invariant distributions for the
  diffeomorphism~$\map$. By construction, for any $f \in
  C_0^\infty(M)$ we have the identity
  \[
  \widetilde E_\theta(f \circ \map) (x,r) = \widetilde E_\theta(f )
  (x,r+1) \,, \quad \text{ for all } (x,r)\in M\times \R\,,
  \]
  which, under projection on $\susp{M}$, implies the following
  identity:
  \[
  E_\theta(f \circ \map) = E_\theta(f) \circ \flow_1 \,.
  \]
  It follows that, for all $D\in \mathcal I'_V(\susp{M})$ and all
  $f\in C_0^\infty(M)$, we have
  \[
  E^\ast_\theta (D) (f\circ \map) = D\left( E_\theta(f \circ
    \map)\right) = D\left( E_\theta(f) \circ \flow_1\right) = D\left(
    E_\theta(f)\right) = E^\ast_\theta (D) (f)\,.
  \]
  Hence $E^\ast_\theta ( D) \in \mathcal I'_\map(M)$.

  \smallskip That the continuous linear operator $E_\theta^\ast$ is an
  isomorphism of $\mathcal I'_V(\susp{M})$ onto $ \mathcal I'_\map(M)$
  follows from the construction of an inverse operator. Let $I:
  C_0^{\infty}(\susp{M}) \to C_0^{\infty}(M)$ the continuous linear
  operator defined as follows. For any $F\in C_0^\infty(\susp{M})$,
  the function $I(F)\in C_0^\infty(M)$ is defined as
  \begin{equation}
    \label{eq:Iop}
    I(F) (x) = \int_{-1/2}^{1/2}   F\circ \flow_t (x,0)\,\D t \,, 
    \quad \text{\ for all\ } x\in M\,. 
  \end{equation}
  By the above definition, it is clear that the operator $I$ extends
  to a linear continuous operator from Sobolev and H\"older spaces of
  functions on $\susp{M}$ to Sobolev, respectively H\"older, spaces of
  functions on $M$ of the same order.

  It is immediate from the construction that
  \[
  (I \circ E_\theta) (f) =f \,, \quad \text{ for all } f \in
  C_0^\infty(M),
  \]
  hence the dual operator $I^\ast : \mathcal D'(M) \to \mathcal
  D'(\susp{M})$ is a right inverse of the operator $E^\ast_\theta$:
  \[
  (I^\ast \circ E^\ast _\theta) (D) = D, \quad \text{ for all } D \in
  \mathcal D' (\susp{M})\,.
  \]
  We claim that for all $F\in C_0^\infty(\susp{M})$, the function
  $(E_\theta \circ I)(F) -F$ is a smooth coboundary for the suspension
  flow. Let $\widetilde F \in C^{\infty}(M\times\R)$ be any lift of
  the function~$F$ on $\susp{M}$ to $M\times \R$. By construction
  \[
  \int_0^1 [(\widetilde E_\theta \circ I)(F) -\widetilde F] (x,n+t) \,
  \D t =0 \,, \quad \text{ for all }\,(x,n)\in M\times \Z\,.
  \]
  It follows that the smooth function $\widetilde U$ on $M\times \R$
  defined as
  \[
  \widetilde U (x,r) = \int_0^r [(\widetilde E_\theta \circ I)(F)
  -\widetilde F] \circ \flow_t (x,0)\, \D t \,, \quad \text{ for all
  }\,x\in M,
  \]
  has a well-defined projection $U\in C_0^\infty(\susp{M})$ to the
  quotient $\susp{M}= M\times \R/ {\sim}$, since the function
  $(\widetilde E_\theta \circ I)(F) -\widetilde F$ has a well-defined
  projection and
  \[
  \widetilde U (x,n) =0 \,, \quad \text{ for all }\,(x,n)\in M\times
  \Z\,.
  \]
  In addition, again by construction we have
  \[
  \frac{d}{dr} \widetilde U (x,r) = [(\widetilde E_\theta \circ I)(F)
  -\widetilde F] (x,r)\,, \quad \text{ for all } (x,r)\in M\times
  \R\,, \] hence $VU = (E_\theta \circ I)(F) -F$, that is, the
  function $(E_\theta \circ I)(F) -F$ is a coboundary for the
  suspension flow, as claimed.  It follows that for any invariant
  distribution $D \in \mathcal I_V (\susp{M})$ we have
  \[
  D [(E_\theta \circ I)(F)] = D (F)\,, \quad \text{ for all } F\in
  C_0^\infty (\susp{M}) \,,
  \]
  hence the dual operator $I^\ast : \mathcal I_\map(M) \to \mathcal
  I_V(\susp{M})$ is a left inverse of the operator $E^\ast_\theta$,
  that is,
  \[
  (I^\ast \circ E^\ast _\theta) (D) = D, \quad \text{ for all } D \in
  \mathcal I_V (\susp{M})\,.
  \]
  We therefore conclude that the continuous linear operator $E^\ast:
  \mathcal I_V(\susp{M}) \to \mathcal I_\map(M)$ is an isomorphism as
  its inverse $I^\ast: \mathcal I_\map(M) \to \mathcal I_V(\susp{M})$
  is well-defined and continuous. Since the operator $I$ extends to a
  linear continuous operator from Sobolev and H\"older spaces of
  functions on $\susp{M}$ to Sobolev, respectively H\"older, spaces of
  functions on $M$ of the same order, its dual $I^\ast$ maps Sobolev
  and H\"older spaces of distributions on $M$ to Sobolev, respectively
  H\"older, spaces of distributions on $\susp{M}$ of the same
  order. It follows that the isomorphism $E^\ast: \mathcal
  I_V(\susp{M}) \to \mathcal I_\map(M)$ preserves the Sobolev as well
  as the H\"older order of invariant distributions.
\end{proof}

When a group $\Hgroup$ operates on a group $\Ggroup$ by automorphisms
of $\Ggroup$, we denote by $\Aff_{\Hgroup}(\Ggroup)$ the group of
affine maps $\map= uA$ of $\Ggroup$ such that $A\in \Hgroup$. We also
denote by $\Aff_{\mathcal H}(\Ggroup/ \Dlattice)$ the group of
diffeomorphisms of $\Ggroup/\Dlattice$ induced by affine maps
in~$\Aff_{\Hgroup}(\Ggroup)$.

\begin{lemma}
  \label{lemma:reduct_transl}
  Let $\Ggroup$ and $\Hgroup$ be linear algebraic groups and assume
  that $\Hgroup$ acts on~$\Ggroup$ by rational automorphisms. Let $
  \Gsubgp \subset \Ggroup$ be an $\Hgroup$-invariant normal subgroup
  of $\Ggroup$ and let $ \Dlattice$ be a quasi-lattice in
  $\Gsubgp$. Let $ \map \in \Aff_{\Hgroup}(\Gsubgp/\Dlattice)$ be an
  affine diffeomorphism.

  There exist a connected Lie group $\Gphi $ such that $\Gsubgp <
  \Gphi$, a quasi-lattice $\biglattice$ of~$\Gphi$ containing
  $\Dlattice$, a non trivial one-parameter subgroup $\flow_\R \subset
  \Gphi$ and an affine map $F$ of $\Gphi$ with the following
  properties:
  \begin{enumerate}
  \item The map $F$ is induces a periodic affine map $F_m$ of
    $\Gphi/\biglattice$. Let $\finitegp$ be the group generated by
    $F_m$.
  \item the map $F$ commutes with the action of $\flow_\R$ by left
    translations on $\Gphi$; hence we obtain a quotient flow
    $(\finitegp\backslash \Gphi/ \biglattice, \phi_\R)$ on the double
    coset space $\finitegp\backslash \Gphi/\biglattice$.
  \item the flow $(\finitegp\backslash \Gphi/ \biglattice, \phi_\R)$
    is smoothly conjugate to a suspension of the affine map $\phi$ on
    $\Gsubgp/\Dlattice$.
  \end{enumerate}
  The map $F$ yields an affine map of the Levi factor $\mathcal L$ of
  $\Gphi$.
\end{lemma}
\begin{proof}
  The argument is adapted from the initial step the proof of
  Theorem~7.1 in Dani's paper~\cite{MR0444835}.

  Since $\Aff_\Hgroup(\Gsubgp/\Dlattice) < \Aff_\Hgroup(\Ggroup)$, we
  may consider the algebraic hull $\<\map\>^Z$ of the subgroup
  generated by $\map$ in the algebraic group $\Aff_\Hgroup(\Ggroup)$.
  The group $\<\map\>^Z$ is Abelian with finitely many connected
  components, hence there exists an element $\hatf \in\<\map\>^Z $ of
  order $m$ and a one-parameter group $
  \bar{\flow}_\R=(\bar{\flow}_t)_{t\in \R} < \<\map\>_0^Z$ commuting
  with $\hatf$ such that $\map= \hatf \circ \bar{\flow}_1$. This
  implies that $ \map^m= \bar{\flow}_{m}$. (Here we used the symbol
  $\circ$ for the product in $\Aff_\Hgroup(\Ggroup)$ as we think of
  it as a group of maps of $\mathcal A$.)

  The group $\bar{\flow}_\R$ is non-trivial, otherwise $\map =\hatf$
  and $\map^m= \text{id}$, but by hypothesis the order of $\map$ is
  infinite.

  \smallskip Let us embed the group $\Ggroup$ (and its subgroups) into
  $\text{\rm Aff}_{\Hgroup} (\Ggroup)$ by identifying elements of the
  group with the associated left translations.  Since the subgroup
  $\Gsubgp$ is normal in $\Ggroup$ and is $\mathcal H$ invariant, it
  follows that it embeds as a normal subgroup of $\text{\rm
    Aff}_{\Hgroup} (\Ggroup)$.  The subgroup $\bar {\Gsubgp}$ of ${\rm
    Aff}_{\Hgroup} (\Ggroup)$ generated by the subgroups
  $\bar{\phi}_\R$ and $ \Gsubgp$ may fail to be a topological group.
  For this reason we will then define $\Gphi := \Gsubgp \rtimes
  \flow_{\R}$ to be the external semi-direct product of the universal
  cover $\flow_{\R}$ of $\bar{\flow}_\R$ which acts on the normal
  subgroup $\Gsubgp$ by inner automorphisms.
 
  We begin by explaining the algebraic outline of the construction,
  ignoring, at first, the topological difficulty mentioned above.

  Since $\map\in \text{\rm Aff}_{\Hgroup}( \mathcal N/ \Dlattice)$,
  its automorphism part $A_\map:=\text{\rm Aut}(\map)$ belongs to
  $\Hgroup$ and maps the quasi-lattice $\Dlattice$ into itself.  We
  claim that the product $\bar{\Dlattice}:= \Dlattice\cdot (
  A_\map^{im})_{i\in\Z} $ is a subgroup of $ \bar{\Gsubgp}$. In fact,
  since $A^m_\map = \text{\rm Aut}(\bar \phi_{m})$ and the translation
  part of $\bar{\phi}_{m} = \map^m$ belongs to $\Gsubgp$, it follows
  that $A^m_\map \in \bar {\mathcal N}$. Since $\Dlattice$ is a
  subgroup in $\Gsubgp$, it follows that $\bar{\Dlattice}$ is a
  subgroup of $\bar {\Gsubgp}= \phi_\R \circ \Gsubgp$.
  
  \smallskip Let $\bar F$ be defined by
  \[
  \bar F ( x) = \hatf \circ x \circ A_\map^{-1}\,, \quad \text{ for
    every }\, x \in \bar {\Gsubgp} \,.
  \]
  The map $\bar{F}$ is a diffeomorphism of $\bar{\Gsubgp}$ onto
  itself. In fact, by construction, the affine map $\hatf$ coincides
  with $\phi_{-1} \circ \map $ and commutes with the group
  $\bar{\phi}_\R$. Furthermore $ \map \circ n = \map( n ) \circ
  A_\map$, for all $n \in \Gsubgp$.  Hence for all $(t, n) \in\R
  \times \Gsubgp$, we have
  \begin{equation}
    \label{eq:action}
    \begin{aligned}
      \hatf \circ \bar \phi_t \circ n \circ A_\map^{-1} &=
      \bar{\phi}_t \circ \hatf \circ n \circ A_\map^{-1} =\bar{\phi}_t
      \circ \bar\phi_{-1} \circ \map \circ n \circ A_\map^{-1} \\ &=
      \bar{\phi}_{t -1} \circ \map ( n) \in \bar {\mathcal N} \,.
    \end{aligned}
  \end{equation}

  Since the diffeomorphism $\bar{F}$ commutes with the action of the
  subgroup $\bar {\Dlattice}$ by multiplication on the right, since
  $\hatf$ has finite order $m\in \N$ in the group $\text{\rm
    Aff}_{\Hgroup}(\Ggroup)$ and since $ A_\map^{-m} \in
  \bar{\Dlattice}$, it follows that the diffeomorphism $\bar{F}$
  induces a periodic diffeomorphism $\bar{F}_m$ of order $m$ of the
  quotient space $\bar{\Gsubgp}/ \bar{\mathcal D}$.

  \begin{sloppypar}
    Let $\bar{\finitegp}\approx \Z/m\Z$ be the group of
    diffeomorphisms of $\bar{\Gsubgp}/ \bar{\mathcal D}$ generated by
    $F_m$. The one-parameter group $\bar{\phi}_\R \subset
    \bar{\mathcal N}$ acts by left translations on the quotient
    $\bar{\finitegp} \backslash \bar{\mathcal N}/\bar{\Dlattice}$
    since the map $\bar{F}$ commutes with the left multiplication by
    the one-parameter group~$\bar \phi_\R$. By the above formula, it
    is also clear that the left translation by the element $\bar
    \phi_1$ on $\bar{\finitegp} \backslash
    \bar{\Gsubgp}/\bar{\Dlattice}$, restricted to the subset $\mathcal
    N/\Dlattice$, coincides with the map $\map$ on $\mathcal
    N/\Dlattice$.
  \end{sloppypar}

  \smallskip The map $\bar{F}$ is an affine diffeomorphisms of
  $\bar{\Gsubgp}$. In fact, the inner automorphism $ \text{\rm
    Int}(\map)$ determined by $\map$ on $\text{\rm
    Aff}_{\Hgroup}(\Ggroup)$ induces an automorphism of
  $\bar{\Gsubgp}$ as shown by the following formula. For $j\in \Z$,
  let $n_{\map^j}\in \Gsubgp$ be the translation part of the affine
  map $\map^j$ which we write as $\map^j= n_{\map^j}\circ A^j_\map$.
  For all $(t, n) \in\R \times \Gsubgp$ we have
  \[
  \begin{aligned}
    \text{\rm Int}(\map) ( \bar{\phi}_t \circ n )&= \map \circ
    \bar{\phi}_t \circ n \circ \map^{-1} \\ &= \bar{\phi}_t \circ \map
    \circ n \circ \map^{-1} = \bar{\phi}_t \circ \map( n) \circ
    n_\map^{-1} \in \bar{\Gsubgp} \,.
  \end{aligned}
  \]
  It follows that the map $\bar{F}$ is the composition of an
  automorphism of $ \bar{\Gsubgp}$ (restriction of the inner
  automorphism $\text{\rm Int}(\map)$ of $\text{\rm Aff}_{\Hgroup}
  (\Ggroup)$ given by $\map$), followed by a left translation (by the
  element $\bar{\phi}_{-1} \in \bar{\Gsubgp}$) and by a right
  translation (by the element $n_\map \in \bar{\Gsubgp}$), that is,
  for all $x\in \bar{\Gsubgp}$,
  \begin{equation}
    \label{eq:barPsi}
    \bar{F}( x)  :=  \bar{\phi}_{-1}  \circ  \text{\rm Int}(\map)
    (x)  \circ n_\map \,.
  \end{equation}
  As any right translation is an affine map given by an inner
  automorphism followed by a left translation, we have proved that the
  diffeomorphism $\bar{F}$ is affine.

  \smallskip Let us now take care of the topological part of the
  construction. As the universal cover of the group $\bar\flow_\R$ is
  isomorphic to $\R$, we define the group $\Gphi:= \Gsubgp\rtimes\R$
  to be the external semi-direct product of the group $\R$ which acts
  on the normal sub\-group~$\Gsubgp$ via the action of the
  one-parameter group $\bar{\phi}_\R$ by inner automorphisms.  In
  other terms, by definition the group $\Gphi$ is the Cartesian
  product $ \Gsubgp\times\R $ endowed with the following product law:
  for all $(n_1, t_1)$, $(n_2,t_2) \in \mathcal N \times\R$, we let
  \[
  (n_1,t_1) \ast (n_2,t_2)= ( n_1\circ {\bar \phi}_{t_1} \circ
  n_2\circ {\bar \phi}_{t_1}^{-1} ,t_1+t_2)\,.
  \]
  The group $\Gphi$ endowed with the product topology is a connected
  Lie group.  By construction there exists a group epimorphism $\pi:
  (n,t)\in \Gphi \mapsto n\circ \bar \phi_t\in \bar{\Gsubgp}$. Let
  $\biglattice := \pi^{-1} (\bar{\Dlattice})$ be the inverse image of
  the subgroup $\bar{\Dlattice} \subset \bar{\Gsubgp}$.  By definition
  of the subgroup $\bar{\Dlattice}$, the group $ \biglattice$ is
  generated by $ \Dlattice \times \{0\}$ and by the elements
  $A_\phi^{km}= n_{\psi^{km}}^{-1}\circ
  \map^{km}=n_{\psi^{km}}^{-1}\circ \bar\phi_{km} $ hence given by
  following formula:
  \begin{equation}
    \label{eq:Dphi}
    \biglattice = \{ (\gamma n_{\psi^{km}}^{-1},km) \mid k\in \Z, \gamma \in D\}.
  \end{equation}
  It follows that the subgroup ${\biglattice}$ is a quasi-lattice in
  $\Gphi$ as its projection on the second coordinate is the lattice
  $m\Z$ in $\R$, and for each $t\in \R$ the fiber over $t$ of
  $\Gphi/\biglattice$ is the finite volume space $\Gsubgp/({\bar
    \phi}_{t} \Dlattice {\bar \phi}_{t}^{-1})$.

  The one-parameter group $\flow_\R$ defined by $\flow_t=(1,t)$, for
  all $t\in\R$, acts by left multiplication on $\Gphi/\biglattice$,
  hence defining a flow on $\Gphi/\biglattice$ denoted by the same
  letter.
 
  \smallskip For all $ (1_{\Gsubgp},t), (n,0) \in \Gphi$, let
  \begin{equation}
    \label{eq:Psi}
    F\big((1_{\Gsubgp},t)*(n,0)\big) :=
    (1_{\Gsubgp},t-1)*(\map (n),0) \,.
  \end{equation}
  It is clear, by the definition, that $F\colon \Gphi\to \Gphi$ is a
  diffeomorphism commuting with the flow of left translations by
  $\phi_\R$. Furthermore, formula \eqref{eq:barPsi} shows that the map
  $F$ projects, via the epimorphism $\pi: \Gphi \to \bar{\Gsubgp}$, to
  the map $\bar F\colon \bar\Gsubgp\to \bar\Gsubgp$.

  We claim that the map $F$ induces a periodic diffeomorphism $F_m$ of
  order $m$ of the quotient space $\Gphi/ {\biglattice}$ and we denote
  by $\finitegp\approx \Z/m\Z$ the finite group of affine
  diffeomorphims of $ \Gsubgp/\Dlattice$ generated by $F_m$. To see
  this, it suffices to verify that $F^m$ induces the identity map
  on~$\Gphi/ {\biglattice}$. In fact, for all $ (1_{\Gsubgp},t), (n,0)
  \in \Gsubgp\rtimes \R$, we have
  \[
  \begin{aligned}
    F^m \big((1_{\Gsubgp},t)*(n,0)\big)*\biglattice&=
    (1_{\Gsubgp},t-m)*(\map^k (n),0)*(n^{-1}_{\psi^m},m)*\biglattice\\
    &= (1_{\Gsubgp},t)*(1_{\Gsubgp},-m)*(\map^k (n)
    n^{-1}_{\map^m},0)*(1_{\Gsubgp},m)*\biglattice\\&=
    (1_{\Gsubgp},t)*(1_{\Gsubgp},-m)*(\text{\rm Int}(\map^k) (n),0)*
    (1_{\Gsubgp},m)*\biglattice\\
    &= (1_{\Gsubgp},t)*(n,0)*\biglattice.
  \end{aligned}
  \]

  It follows from formula~\eqref{eq:Psi} that the points
  $(1_{\Gsubgp},t)*(n,0)\big)= \flow_{t}\big((n,0)\big)$ and $
  \flow_{t-1}\big((\map(n),0)\big)$ are identified by the map $F$, and
  therefore by the group $\<F\>$ of affine diffeomorphisms generated
  by $F$.  Thus, in the quotient space $\<F\> \backslash \Gphi$, the
  first return map of the flow $\phi_\R$ to the transverse section
  $\Gsubgp \times \{0\} \approx \Gsubgp$ coincides with the time-$1$
  map of the flow and is conjugated to the affine map $\map$ on
  $\Gsubgp$ in the sense that
  \[
  F\big(\phi_{1} (n,0)\big) = \left( \map (n),0\right)\,, \quad \text{
    \rm for all } n \in \Gsubgp\,.
  \]
  By passing to the quotient by $\Dlattice$, it follows that the
  return map of the flow $\phi_\R$ to the transverse section
  $(\mathcal N\times \{0\} )/\biglattice \approx \Gsubgp/\Dlattice$ in
  the double-coset space $ \finitegp\backslash \Gphi/\biglattice$
  coincides with the time-$1$ map and is conjugated to the affine map
  $\map$ on $\Gsubgp/\Dlattice$.

  \smallskip Let $\mathcal L$ be a Levi subgroup of the group $\Gphi$
  and let $q\colon \Gphi\to \mathcal L$ be the projection of group
  $\Gphi$ onto $\mathcal L$, with kernel the radical of $\Gphi$.  As
  the radical is a characteristic subgroup of $\Gphi$, any affine map
  of $\Gphi$ projects, via the homomorphism $q$, to an affine map of
  the Levi subgroup $\mathcal L$.  In particular the affine
  diffeomorphism $F$ and the one-parameter group $\flow_{\R}$ project
  to an affine diffeomorphism $q(F)$ of $\mathcal
  L/\overline{q(\biglattice)}$ and to a one-parameter group
  $q({\flow}_{\R})< \mathcal L$, inducing commuting actions on
  $\mathcal L/\overline{q(\biglattice)}$.

  \smallskip The argument is therefore completed.
\end{proof}

\begin{remark}
  \label{rem:3:1}
  To prove Theorem~\ref{thm:Main}, we may limit ourselves to
  consider the case where the flow~$\flow_\R$ is ergodic on $G/D$ with
  respect to a finite $G$-invariant measure. This is due to the fact
  that the ergodic components of the flow~$\flow_\R$ are closed
  subsets of~$G/D$ (see~\cite[Thm.\ 2.5]{Starkov}). Since $G/D$ is
  connected, if he flow~$\flow_\R$ is not ergodic, then it has
  infinitely many ergodic components, in which case
  Theorem~\ref{thm:Main} follows.
 
  The proof of Theorem~\ref{thm:Main_Affine} relies, after some
  technical reductions, on Theorem~\ref{thm:Main} and
  Lemma~\ref{lemma:reduct_transl}.  

Let us explain the main difficulty
  in applying the Lemma~\ref{lemma:reduct_transl}.  We recall that, by
  Proposition~\ref{prop:3homogflow:1}, the space of invariant
  distribution for a suspension flow $(\susp{M},\flow_\R)$ of a
  diffeomorphism $(M,\map)$ is isomorphic to the space of invariant
  distribution for the diffeomorphism $\map$.  However, the flow
  $(\Gphi/\biglattice, \flow_\R)$ constructed in
  Lemma~\ref{lem:3homogflow:1} is the suspension of \emph{a power} of
  the affine diffeomorphism~$\map$ on $\Gsubgp/\Dlattice$. The actual
  suspension of the diffeomorphism $\map$ is given by the projected
  flow $(\finitegp\backslash\Gphi/\biglattice, \flow_\R)$. Now,
  invariant distributions for the flow $(\Gphi/\biglattice,
  \flow_\R)$, produced, for example, by Theorem~\ref{thm:Main},
  may vanish when projected on the space
  $\finitegp\backslash\Gphi/\biglattice$ and special care must be
  taken to avoid this problem. We remark, however, that this
  difficulty does not arise when the invariant distributions for the
  flow $(\Gphi/\biglattice, \flow_\R)$ are given by measures,
  since the projections of ergodic probability measures of a flow
  under a finite-to-one projection are ergodic probability measures
  for the projected flow. Thus, in applying the
  Lemma~\ref{lemma:reduct_transl}, we may suppose that the homogeneous
  flow $(\Gphi/\biglattice, \flow_\R)$ there constructed is
  ergodic, by the same argument applied at the beginning of this
  remark.
\end{remark}

Henceforth we shall consider ergodic flows. Whenever convenient we may
also assume that $G$ is simply connected by pulling back the isotropy
group $D$ to the universal cover of $G$. We remark that if $D$ is a
quasi-lattice this pull-back is also a quasi-lattice.

Let $G=L\ltimes R$ be the Levi decomposition of a simply connected Lie
group $G$ and let $G_\infty$ be the smallest connected normal subgroup
of $G$ containing the Levi factor~$L$. Let $q: G\to L$ be the
projection onto the Levi factor. We shall use the following result.

\begin{theorem}[{\cite{MR0444835}, \cite{MR896893}, \cite[Lemma 9.4,
    Thm.\ 9.5]{Starkov}}]
  \label{thm:3:2}
  If $G$ is a simply connected Lie group and the flow $(G/D,
  \flow_\R)$ on the finite volume space $G/D$ is ergodic then
  \begin{itemize}
  \item The groups $R/R\cap D$ and $q(D)$ are closed in $G$ and in $L$
    respectively. Thus $G/D$ factors onto $L/q(D)\approx G/RD$ with
    fiber $R/R\cap D$.  The semi-simple flow $(L/q(D), q(\flow_\R))$
    is ergodic.
  \item The solvable flow $(G/\overline{G_\infty D}, \flow_\R)$ is
    ergodic.
  \end{itemize}
\end{theorem}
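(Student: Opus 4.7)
The plan is to follow the classical strategy of Mostow--Auslander--Dani, reducing the assertions about $D$ to algebraic facts about its image in the Levi factor and then deducing ergodicity of the factor flows as a tautology. A preliminary reduction is to pass to the identity component of the isotropy: since $G$ is simply connected, $R$ is also simply connected solvable and $L$ is semisimple, so that the Levi decomposition $G = L \ltimes R$ actually splits as a semidirect product and $q: G \to L$ is a Lie group homomorphism.

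First I would establish the closedness of $RD$ in $G$, which is equivalent to the closedness of $q(D)$ in $L$. The key idea is a Borel-density-type argument: if $\overline{q(D)}/q(D)$ were nontrivial, one would produce a strictly larger closed subgroup $H = q^{-1}(\overline{q(D)}) \supset RD$ whose fibration $G/D \to G/H$ would force a contradiction with either the finiteness of the $G$-invariant measure on $G/D$ or the ergodicity of $g^\R$. In the classical lattice case of Mostow this uses the Zariski density of projections of lattices to the Levi factor; in the general finite-volume case one uses the analogous result of Dani, which in turn rests on the recurrence of unipotent orbits and the Moore ergodicity theorem applied to the radical $R$ acting on $G/D$. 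This step is where essentially all the difficulty lies.

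Once $RD$ is closed, the factor $G/RD$ is a Hausdorff homogeneous space. Because $R$ is normal in $G$ and $G = L \ltimes R$ splits, the map $L \hookrightarrow G \to G/RD$ descends to a natural isomorphism $L/q(D) \cong G/RD$, and the projection $\pi : G/D \to L/q(D)$ is a smooth fiber bundle with typical fiber $RD/D \cong R/(R\cap D)$. The equivariance $\pi(g^t x) = q(g^t)\pi(x)$ means that the one-parameter subgroup flow $(L/q(D), q(g^\R))$ is a measure-theoretic factor of $(G/D, g^\R)$; ergodicity of the latter therefore passes to the former by the general fact that any factor of an ergodic system is ergodic.

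For the second item, $G_\infty$ is closed and normal, and $\overline{G_\infty D}$ is a closed subgroup by construction, so $G/\overline{G_\infty D}$ is a Hausdorff homogeneous space and a factor of $G/D$ equivariantly under $g^\R$. Ergodicity on this solvable quotient is again immediate from ergodicity on $G/D$ via the factor principle; the only thing to verify is that the $G$-invariant measure on $G/D$ pushes forward to a $G$-invariant finite measure on the quotient, which follows from the normality of $G_\infty$ and unimodularity of the relevant subgroups in the finite-volume setting. The main obstacle remains step one, the Auslander--Dani closedness theorem for $RD$, which I would simply cite from the references indicated rather than reproduce.
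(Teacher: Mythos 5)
The paper does not prove Theorem~\ref{thm:3:2} at all; it is cited as known from Auslander, Brezin--Moore, and Starkov's monograph, so there is no internal argument to compare against. Your reconstruction of the underlying Mostow--Auslander--Dani reduction is sound: the semidirect splitting of the Levi decomposition for simply connected $G$, the identification $L/q(D)\approx G/RD$ once $RD$ is closed, the automatic passage of ergodicity to measure-theoretic factors, and the same factor-principle argument applied to the solvable quotient $G/\overline{G_\infty D}$ (using that the closure of a subgroup is a subgroup and that push-forward of a finite invariant measure is finite and invariant) are all correct. You rightly isolate the closedness of $RD$, equivalently of $q(D)$ in $L$, as the only genuinely hard step and defer it to the Borel/Dani density-and-recurrence arguments in the cited literature, which is precisely what the paper itself does. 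No gap.
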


By Theorem~\ref{thm:3:2} it is possible to reduce the analysis of the
general case to that of the semi-simple and solvable cases.  In fact,
the following basic result holds.

\begin{lemma}\label{lem:3:3}
  Let $p: G\to G^{(1)}$ be an epimorphism with $D \subset
  p^{-1}(D^{(1)})$ and let $\hat p \colon G/D \to G^{(1)}/D^{(1)}$ be
  the induced quotient map. Let $\map\colon G/D\to G/D $ and
  $\map^{(1)}\colon G^{(1)}/D^{(1)}\to G^{(1)}/D^{(1)}$ be smooth maps
  intertwined by $\hat p $. Then the dimension of the space of
  $\map$-invariant distributions on $ G/D $ of Sobolev order at most
  $s$ is greater than or equal to the dimension of the space of
  $\map^{(1)}$-invariant distributions in $\mathcal
  D'(G^{(1)}/D^{(1)})$ of Sobolev order at most $s$.

  In particular, if the flow $(G/D, \flow_\R)$ of $G/D$ projects onto
  a flow $(G^{(1)}/D^{(1)}, \flow^{(1)}_\R)$ via the epimorphism~$p$,
  then the existence of countable many independent invariant
  distributions for the flow $(G^{(1)}/D^{(1)}, \flow^{(1)}_\R)$
  implies the existence of countable many independent invariant
  distributions for the flow $(G/D, \flow_\R)$. An analogous statement
  is valid for an affine map $\map$ projecting, via $p$, onto an
  affine map $\map^{(1)}$ of $G^{(1)}/D^{(1)}$.

\end{lemma}
\begin{proof}
  Let $H=L^2(G^{(1)}/D^{(1)})$ and let $\hat p: G/D \to
  G^{(1)}/D^{(1)}$ be the map induced by the epimorphism $p$. The map
  $\hat p$ is $G$-equivariant for the natural left action of $G$ on
  $G/D$ and $G^{(1)}/D^{(1)}$ and measure preserving for the
  $G$-invariant probability measures on these spaces. It follow that
  the pull-back map $\hat p^*$ is a $G$-equivariant isometry of $H$
  onto the $G$-invariant closed subspace $\hat p^*(H) \subset
  L^2(G/D)$. We therefore define the \emph{push-forward} map $\hat
  p_*\colon \hat p^*(H) \to H$ as the inverse of $\hat p^*$.

  Since the orthogonal decomposition $L^2(G/D) = \hat p^*(H) \oplus
  \hat p^*(H) ^\perp$ is $G$-invariant, for any smooth function $f$ on
  $G/D$ its components in this orthogonal decomposition are smooth. It
  follows that the push-forward map $\hat p_*$ is a linear map of the
  space of compactly supported smooth functions $C^\infty_0(G/D)$ onto
  a dense subspace of $C^\infty_0(G^{(1)}/D^{(1)})$. Setting, for any
  $D\in \Dlattice'(G^{(1)}/D^{(1)})$ and any $f\in C^\infty_0(G/D)$,
  \[
  \hat p^*(D)(f)= D(\hat p_*(f)),
  \]
  we obtain a linear continuous injection of $\mathcal
  D'(G^{(1)}/D^{(1)})$ into $\Dlattice'(G/D)$. This map preserves the
  Sobolev order of distributions because it is $G$-equivariant.

  For any pair of smooth maps $\map$ and $\map^{(1)}$ of $G/D$ and
  $G^{(1)}/D^{(1)}$ respectively, such that $\hat p\circ
  \map=\map^{(1)}$, if $D\in \Dlattice'(G^{(1)}/D^{(1)})$ is
  $\map$-invariant, the image distribution $\hat p^*(D)$ is
  $\map^{(1)}$ invariant. Thus the dimension of the space of
  $\map$-invariant distributions in $\Dlattice'(G/D)$ is greater than
  or equal to the dimension of the space of $\map^{(1)}$-invariant
  distributions in $\Dlattice'(G^{(1)}/D^{(1)})$.
\end{proof}

In dealing with solvable groups it is useful to recall the theorem by
Mostow (see~\cite[Theorem E.3]{Starkov})

\begin{theorem}[Mostow]
  \label{thm:mostow} If $G$ is a solvable Lie group, then $G/D$ is of
  finite volume if and only if $G/D$ is compact.
\end{theorem}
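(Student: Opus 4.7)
The direction ``$G/D$ compact $\Rightarrow$ $G/D$ finite volume'' is immediate, since the $G$-invariant measure (which is part of the data in the paper's convention of ``finite volume'') is automatically finite on a compact space. I would therefore concentrate on the non-trivial converse. By lifting to the universal cover of $G$ and pulling back $D$, I may assume that $G$ is simply connected solvable, so that the exponential map is well behaved and the nilradical has all the nice properties listed below.

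The plan is to reduce to the nilpotent case, which is the classical theorem of Malcev. Let $N$ be the nilradical of $G$. Because $G$ is simply connected solvable, $N$ is closed, connected, simply connected, normal and nilpotent, and the quotient $G/N$ is \emph{abelian}, since by Lie's theorem $[G,G]$ is nilpotent and hence contained in $N$. I would then establish two assertions: (i) the subgroup $ND$ is closed in $G$, equivalently the $N$-orbit of the base point in $G/D$ is closed; and (ii) the fibre $ND/D\cong N/(N\cap D)$ of the projection $\pi:G/D\to G/ND$ carries a finite $N$-invariant measure, obtained by disintegrating the $G$-invariant measure on $G/D$ along $\pi$. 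Granting (i) and (ii), Malcev's theorem applies to $N/(N\cap D)$, giving a compact fibre. Pushing the measure forward, the base $G/ND\cong(G/N)/(ND/N)$ becomes a finite-volume homogeneous space of an \emph{abelian} Lie group; such a space is a quotient of $\R^a\times\T^b$ by a closed subgroup, and finite volume forces $a=0$. Thus $G/ND$ is compact, and since a fibration with compact base and compact fibres has compact total space, $G/D$ is compact as well.

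I expect the principal obstacle to be assertion~(i), the closedness of $ND$ in $G$, which is precisely where solvability (as opposed to mere nilpotence) of $G$ is genuinely used. One natural route would be to invoke the Auslander-Mostow structure theorem for simply connected solvable Lie groups, which furnishes a semidirect splitting making the interaction of $D$ with the nilradical controllable; an alternative route is a Chabauty-type compactness argument on the space of closed subgroups of $G$, using the finiteness of the $G$-invariant volume on $G/D$ to rule out the accumulation of $N$-orbits on one another. Assertion~(ii) should then follow routinely from the standard disintegration of invariant measures along group-equivariant fibrations, and from the fact that the conditional measures are $N$-invariant because both $\mu$ and $\mu_1$ are translation invariant on their respective spaces.
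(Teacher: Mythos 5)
You correctly identify the reduction to the nilradical $N$ and the two-step strategy: apply Malcev's theorem to the fiber $N/(N\cap D)$, and the elementary fact that a finite-volume quotient of an abelian Lie group is compact to the base $G/ND$. This is indeed the skeleton of the standard proof. For what it is worth, the paper itself gives no proof of this theorem; it is quoted from the literature (Starkov's book, Theorem E.3), so there is no internal argument to compare against.

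The genuine gap is assertion~(i), the closedness of $ND$ in $G$, which you flag as the principal obstacle but do not close, and neither of your two suggested routes works as stated. The Auslander--Mostow structure theorem, in the form recalled later in this paper (Theorem~\ref{thm:mostow-auslander}), is formulated for \emph{quasi-lattices} $D$, i.e.\ closed subgroups for which $G/D$ is already compact; appealing to it here begs the question, since its conclusion that $\pi(D)$ is a lattice in the semi-simple factor $T$ is essentially equivalent to the closedness of $ND$, which is precisely what is not available in the finite-volume setting before Mostow's theorem is established. The Chabauty-type compactness route is too vague to evaluate as written. In the literature, closedness of $ND$ (together with the discreteness of the image of $D$ in $G/N$) is obtained either by a careful simultaneous induction on the derived length of $G$, proving closedness and compactness together at each stage (this is Raghunathan's treatment), or, in Mostow's original approach, via an algebraic-group density argument comparing the Zariski closures of $\Ad(D)$ and $\Ad(G)$ in $\GL(\mathfrak g_0)$. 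Neither is a routine step. Assertion~(ii) and the remainder of your outline are fine granting~(i), but until~(i) is supplied by one of these, or an equivalent, arguments, the proposal is a correct road-map rather than a proof.
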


When $G$ is semi-simple, in proving Theorem~\ref{thm:Main}, we may
suppose that $G$ has finite center and that the isotropy group $D$ is
a lattice. This is the consequence of the following proposition.

\begin{proposition}
  \label{prop:lattice_red}
  Let $G$ be a connected semi-simple group and let $G/D$ be a finite
  volume space.  If there exists an ergodic flow on $G/D$, then the
  connected component of the identity of $D$ in $G$ is normal in
  $G$. Hence we may assume that $G$ has finite center and that $D$ is
  discrete.
\end{proposition}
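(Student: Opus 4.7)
My plan is to prove that the Lie subalgebra $\mathfrak{d}:=\mathrm{Lie}(D^0)$ is an ideal of $\mathfrak{g}$, which is equivalent to the assertion that $D^0$ is normal in $G$. Once this is established, the remaining reductions (to $D$ discrete and to $G$ of finite center) will follow by quotienting.

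First I would set $N := N_G(D^0)$, the normalizer of $D^0$ in $G$. Since $D$ normalizes its own identity component, $D \subset N$, and there is a $G$-equivariant surjection $\pi : G/D \to G/N$. Push-forward under $\pi$ of the finite $G$-invariant measure on $G/D$ produces a finite $G$-invariant measure on the homogeneous space $G/N$.

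The key step is to deduce from this that $N = G$, and my approach is a Borel-density-type argument. One may identify $G/N$ with the $\Ad(G)$-orbit of $\mathfrak{d}$ inside the Grassmannian $\mathrm{Gr}_{\dim\mathfrak{d}}(\mathfrak{g})$, on which $G$ acts through the (algebraic) adjoint representation. A generalized Borel density theorem asserts that for a connected semi-simple Lie group $G$ without compact factors, any closed subgroup $H\subset G$ such that $G/H$ carries a finite $G$-invariant measure must be Zariski-dense in $G$. Since the normalizer $N$ is cut out in $G$ by the polynomial condition $\Ad(g)\mathfrak{d} = \mathfrak{d}$, it is Zariski-closed; combined with $D\subset N$ and the Zariski-density of $D$ in $G$, this forces $N=G$. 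Hence $\Ad(G)\mathfrak{d} = \mathfrak{d}$, i.e., $D^0 \triangleleft G$. Any compact factors of $G$ act trivially on the relevant Grassmannian through $\Ad$ and are easily absorbed into $D^0$. As an alternative to the direct Borel-density route, one can give a parallel ergodic-theoretic proof: the ergodicity of $g^\R$ on $G/D$ descends via $\pi$ to ergodicity on $G/N$, and Furstenberg's lemma—no non-trivial finite $G$-invariant measure exists on an algebraic $G$-variety without $G$-fixed points—then forces $G/N$ to be a single point.

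Once $D^0 \triangleleft G$, I would replace $G$ by the connected Lie group $G/D^0$ (to which $g^\R$ descends as a one-parameter subgroup) and $D$ by the discrete quotient $D/D^0$; preservation of the finite $G$-invariant volume forces $D/D^0$ to be a lattice in $G/D^0$. Finally, passing further to the adjoint group of the resulting semi-simple Lie group yields a group with trivial, hence finite, center, under which the flow and its invariant distributions manifestly descend. The main obstacle is the density step: applying the Borel density theorem in the (Lie, rather than strictly algebraic) setting and correctly handling the compact factors of $G$ requires some care, but both points are standard once the algebraic framework is fixed.
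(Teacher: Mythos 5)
Your overall outline (reduce to the Zariski-closed normalizer $N=N_G(D^0)$, invoke Borel density to get $N=G$, then pass to the quotient) is the same strategy the paper follows, but the execution has a real gap exactly where the paper spends most of its effort: the \emph{compact factors} of $G$.

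First, the assertion that ``compact factors of $G$ act trivially on the relevant Grassmannian through $\Ad$'' is simply false: if $K$ is a nontrivial compact semi-simple factor, $\Ad(K)$ acts nontrivially on $\mathfrak g$ (and hence on Grassmannians of $\mathfrak g$). Second, the Borel-density statement you quote requires $G$ to have no compact factors, yet your main route uses only finiteness of the invariant volume and never uses ergodicity. Without ergodicity the conclusion is outright false: take $G=SU(2)$ and $D=T$ a maximal torus; then $G/D=S^2$ has finite volume and $D^0=T$ is not normal. This is precisely why the hypothesis ``there exists an ergodic flow'' appears in the statement, and why the paper first exploits ergodicity to show that the closure of the one-parameter flow subgroup is a torus which acts transitively on $K_1/\overline{p(D)}$, forcing $\overline{p(D)}=K_1$ and hence $\overline{SD}=G$; only then does Borel density (which by itself yields only $\Ad(S)\subset \overline{\Ad(D)}^Z$, since compact groups contribute no hyperbolic or unipotent elements) combine with this density to give $\overline{\Ad(D)}^Z=\Ad(G)$. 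Your ``alternative'' Furstenberg route has the analogous problem: the form of Furstenberg's lemma you cite (no finite invariant measure on a $G$-variety without $G$-fixed points) again fails when $G$ has compact factors, since compact groups preserve Haar measure on every orbit. You flag at the end that ``correctly handling the compact factors requires some care,'' but this is not a routine technicality to be absorbed; it is the substantive part of the argument and is where the ergodicity hypothesis is used. The final reductions (quotient by $D^0$ to make $D$ discrete, then by the finite-index subgroup $D\cap Z(G)$ to make the center finite) are fine and match the paper.
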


\begin{proof}
  We have a decomposition $G=K\cdot S$ of $G$ as the almost-direct
  product of a compact semi-simple normal subgroup $K$ and of a
  totally non-compact normal semi-simple group $S$. Let $p: G\to
  K^{(1)}:=G/S$ be the projection of $G$ onto the semi-simple compact
  connected group $K^{(1)}$.  Let $\bar \flow_t$ the flow generated by
  $\bar X=p_* X$ on the connected, compact, Hausdorff space
  $Y:=K^{(1)}/\overline{p(D)}$. As $Y$ is a homogeneous space of a
  compact semi-simple group, the fundamental group of $Y$ is
  finite. The closure of the one-parameter group $(\exp{t\bar
    X})_{t\in \R}$ in $K^{(1)}$ is a torus subgroup $T<K^{(1)}$; it
  follows that the closures of the orbits of $\bar \flow_t$ on $Y$ are
  the compact tori $Tk \,\overline{p(D)}$, ($k\in K^{(1)}$),
  homeomorphic to $T/T\cap k \,\overline{p(D)}\, k^{-1}$.

  Let $\flow_\R$ be an ergodic flow on $G/D$, generated by $X\in
  \mathfrak g_0$.  Since $\bar \flow_t$ acts ergodically on $Y$, the
  action of $T$ on $Y$ is transitive. In this case we have $Y=T/T\cap
  \overline{p(D)}$, and since the space $Y$ is a torus with finite
  fundamental group, it is reduced to a point. It follows that $T<
  \overline{p(D)}=K^{(1)}$. Thus $p(D)$ is dense in $K^{(1)}=G/S$ and
  $SD$ is dense in $G$.

  Let $\bar D^Z$ denote the Zariski closure of $\Ad(D)$ in $\Ad(G)$
  (we refer to Remark 1.6 in \cite{MR591617}).  By Borel Density Theorem
  (see~\cite[Thm.~4.1, Cor.~4.2]{MR591617} and \cite{MR2158954}) the
  hypothesis that $G/D$ is a finite volume space implies that $\bar
  D^Z$ contains all hyperbolic elements and unipotent elements
  in~$\Ad(G)$. As these elements generate $\Ad(S)$, we have $\Ad(S) <
  \bar D^Z $, and the density of $SD$ in $G$ implies $\Ad(G)= \bar D^Z
  $. Since the group of $\Ad(g)\in \Ad(G)$ such that $\Ad(g)
  (\operatorname{Lie}(D)) = \operatorname{Lie}(D)$ is a Zariski-closed
  subgroup of $\Ad(G)$ containing $\bar D^Z$, we obtain that the
  identity component $D^0$ of~$D$ is a normal subgroup of~$G$ and
  $G/D\approx (G/D^0)/(D/D^0)$.  We have thus proved that we can
  assume that $D$ is discrete. We can also assume that $G$ has finite
  center since $D$ is a lattice in~$G$ and therefore it meets the
  center of $G$ in a finite index subgroup of the center.  This
  concludes the proof.
\end{proof}

Our proof of Theorem~\ref{thm:Main} considers separately the cases of
quasi-unipotent and the partially hyperbolic flows.  We recall the
relevant definitions.  \smallskip

Let $X$ be the generator of the one-parameter subgroup $\flow_\R$ and
let~$\mathfrak g^\mu$ denote the generalized eigenspaces of eigenvalue
$\mu$ of $\ad(X)$ on $\mathfrak g = \mathfrak g_0\otimes \C$. The Lie
algebra $\mathfrak g$ is the direct sum of the $\mathfrak g^{\mu}$ and
we have $[\mathfrak g^\mu, \mathfrak g^\nu] \subset \mathfrak
g^{\mu+\nu}$. Let
\[
\mathfrak p^0 =\sum_{\Re \mu =0} \mathfrak g^\mu, \quad \mathfrak p^+
=\sum_{\Re \mu >0} \mathfrak g^\mu,\quad \mathfrak p^- =\sum_{\Re \mu
  <0} \mathfrak g^\mu,
\]
\begin{definition}
  \label{def:qu_ph}
  A flow $\flow_\R$ on $G/D$ is called \emph{quasi-unipotent} if
  $\mathfrak g = \mathfrak p^0$ and it is \emph{partially hyperbolic}
  otherwise.  Thus the flow subgroup $\flow_\R$ is quasi-unipotent or
  partially hyperbolic according to whether the spectrum of the group
  $\Ad(\flow_t)$ acting on $\mathfrak g$ is contained in $U(1)$ or
  not.
\end{definition}

%%% Local Variables: 
%%% mode: latex
%%% TeX-master: "InvDist_affine"
%%% End: 

\section{The quasi-unipotent case}
\label{sec:4}

\subsection{The semi-simple quasi-unipotent case}
\label{sec:G-semisimple}

In this subsection we assume that the group $G$ is semi-simple and the
one-parameter subgroup $\flow_\R$ is quasi-unipotent.

\begin{definition}
  An $\sl_2(\R)$ triple $(a, n^+, n^-)$ in a Lie algebra $\mathfrak
  g_0$ is a non-zero triple satisfying the commutation relations
  \[ [a, n^\pm]= \pm n^\pm, \quad [n^+,n^-]=2a.
  \]
\end{definition}

We prove a generalized version of the Jacobson--Morozov
Lemma~\cite{MR559927}.

\begin{lemma}[Jacobson--Morozov Lemma]
  \label{lem:Jacobson-Morozov}
  Let $n^+$ be a nilpotent element in a semi-simple Lie algebra
  $\mathfrak g_0$. Assume that $n^+$ is invariant under the action of
  a compact subgroup $\finitegp$ of automorphisms of the Lie algebra
  $\mathfrak g_0$. Then we can find a semi-simple element $a\in
  \mathfrak g_0$ and a nilpotent element $n^-\in \mathfrak g_0$ such
  that $(a, n^+, n^-)$ is an $\sl_2(\R)$ triple invariant under the
  action of $\finitegp$.
\end{lemma}
\begin{proof}
  By the Jacobson-Morozov Lemma there exists a semi-simple element
  $a_0$ and a nilpotent element $n^-_0$ such that $(a_0, n^+, n^-_0)$
  is an $\sl_2(\R)$ triple.
    
  Let $m$ be the probability Haar measure on $\finitegp$. We define
  \[
  a := \int_{\finitegp} f (a_0) \,\D m(f).
  \]
  and
  \[
  n_1^- := \int_{\finitegp} f (n^-_0) \,\D m(f).
  \]
  
  By construction the element $a$ is $\finitegp$-invariant. Since
  $n^+$ is also $\finitegp$-invariant and $(a_0, n^+,n^-_0)$ is an
  $\sl_2(\R)$ triple it follows that
     \begin{align*}
      {[}a, n^+{]} &= \int_{\finitegp} {[} f(a_0), n^+{]} \,\D m(f)=
      \int_{\finitegp}   f  ({[}a_0,n^+{]}) \,\D m(f) \\
&=  \int_{\finitegp}   f  (n^+) \,\D m(f) =n^+ \,; \\
      {[}n^+, n_1^-{]} & = \int_{\finitegp} {[} n^+, f (n^-_0){]} \,\D
      m(f) = \int_{\finitegp} f ({[} n^+, n^-_0{]}) \,\D m(f) \\&=
      2\int_{\finitegp} f (a_0) \,\D m(f) = 2 a\,.
    \end{align*}  
  By Morozov's Lemma, there is an element $n_2^-$ such that
  $(a,n^+,n_2^-)$ is a $\sl_2(\R)$ triple.  Let us then define
  \[
  n^- := \int_{\finitegp} f (n^-_2) \,\D m(f).
  \]  
  Then $a$, $n^+$ and $n^-$ are $\finitegp$-invariant. Moreover,
  \begin{align*}
       {[}n^+, n^-{]} & = \int_{\finitegp} {[} n^+, f (n^-_2){]} \,\D
      m(f) = \int_{\finitegp} f ({[} n^+, n^-_2{]}) \,\D m(f)
      \\& = 2\int_{\finitegp} f (a) \,\D m(f) = 2 a \,; \\
      {[}a, n^-{]} & = \int_{\finitegp} {[} a, f (n^-_2){]} \,\D m(f)
      = \int_{\finitegp} f ({[} a, n^-_2{]}) \,\D m(f) \\&=
      -\int_{\finitegp} f (n_2^-) \,\D m(f) = -n^-\,.
   \end{align*}
  
  In conclusion, the elements $(a,n^+,n^-)$ form an $\sl_2(\R)$ triple
  (in particular $n^-$ is a non trivial nilpotent element), invariant
  under the action of the compact subgroup $\finitegp\subset \text{\rm
    Aut}(\mathfrak g_0)$.
\end{proof}

Given a unitary representation of $(\pi, H)$ of a Lie group on a
Hilbert space $H$, we denote by $H^\infty$ the subspace of
$C^\infty$-vectors of $H$ endowed with the $C^\infty$ topology, and by
$(H^{\infty})'$ its topological dual.

\begin{lemma}[\cite{FF1}]
  \label{lem:4:1}
  Let $U_t$ be a unipotent subgroup of\/ $\hbox{\rm PSL}_2(\R)$. For
  each non-trivial irreducible unitary representation $(\pi, H) $ of\/
  $\hbox{\rm PSL}_2(\R)$ there exists a distribution, i.e.\ an element
  of $D\in (H^{\infty})'$, such that $U_t D = D$.
\end{lemma}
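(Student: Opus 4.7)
The plan is to use Bargmann's classification of non-trivial irreducible unitary representations of $\mathrm{PSL}_2(\R)$: every such representation $(\pi,H)$ belongs to the principal series, the complementary series, or the (limits of) discrete series. All non-trivial unipotent one-parameter subgroups of $\mathrm{PSL}_2(\R)$ are conjugate, so one may assume $U^t = \exp(t n^+)$ for the standard upper-triangular nilpotent element $n^+$ in an $\sl_2(\R)$-triple $(a, n^+, n^-)$.

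For the principal and complementary series I would realize the representation in the line model on $\R$: $H$ is either $L^2(\R,dx)$ (principal) or a completion of $C_c^\infty(\R)$ for the non-local inner product indexed by a real parameter $\nu\in(0,1/2)$ (complementary), and $\mathrm{PSL}_2(\R)$ acts by M\"obius transformations twisted by the appropriate cocycle. In this model $U^t$ acts by translation $f(x)\mapsto f(x-t)$, so its infinitesimal generator is (up to sign) $d/dx$ and Lebesgue integration is a formal solution of $\pi(n^+)\mathcal D = 0$. The candidate invariant distribution is accordingly
\[
\mathcal D(f) \;=\; \int_\R f(x)\, dx.
\]
The discrete series is treated in parallel by the half-line (Bergman) realization on $L^2(\R_+, x^\alpha dx)$, in which $U^t$ still acts by translation; the same integration functional, extended by zero and regarded as a boundary value at infinity, provides the required invariant distribution.

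The main obstacle is to show that $\mathcal D$ is continuous on $H^\infty$: smooth vectors in $H$ are not a priori integrable against the reference measure of the model. My plan for this step is to exploit the Casimir operator, which is elliptic in the universal enveloping algebra and acts as a scalar on $H$, together with the commutation relations $[a, n^\pm] = \pm n^\pm$ and $[n^+,n^-]=a$, to derive Schwartz-type decay estimates of the form $|f(x)|\leq C_N\,\langle x\rangle^{-N}$ on smooth vectors, with $C_N$ bounded in terms of the Sobolev semi-norms defining the $C^\infty$ topology of $H^\infty$. Absolute convergence of $\mathcal D(f)$ and its continuity on $H^\infty$ then follow; $U^t$-invariance is immediate from the translation-invariance of Lebesgue measure, and non-triviality follows by testing against any non-negative bump function in $C_c^\infty(\R)\subset H^\infty$.
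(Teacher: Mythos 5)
Your overall plan — Bargmann classification, reduce to the standard upper–triangular unipotent, realize the representation in a function model where $U^t$ acts geometrically, and then exhibit an explicit invariant functional — is on the right track and is close in spirit to the construction in \cite{FF1}. However, the key analytic step of your argument is false, and the specific functional you propose does not in general lie in $(H^\infty)'$.

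\smallskip
\textbf{The gap: smooth vectors are not Schwartz.}
You claim that the Casimir operator, being ``elliptic'' and acting as a scalar, yields Schwartz-type bounds $|f(x)|\le C_N\langle x\rangle^{-N}$ on smooth vectors. This cannot work, for two independent reasons. First, the Casimir of $\sl_2(\R)$ is built from the Killing form, which is indefinite, so the Casimir is \emph{not} elliptic (it is a Lorentzian-type operator on $G$). The elliptic operator one actually uses to control the $C^\infty$-topology on $H^\infty$ is Nelson's Laplacian $\Delta=-\sum X_i^2$ for an orthonormal basis of $\mathfrak g$, which does \emph{not} act as a scalar. Second, and decisively, the conclusion is simply wrong: smooth vectors in the line model decay only \emph{polynomially}, at a rate dictated by the representation parameter. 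The quickest way to see this is to pass to the compact (circle) model: by elliptic regularity for the $K=SO(2)$ direction, $H^\infty\cong C^\infty(S^1)$, and the stereographic change of variables then shows that a smooth vector $f$ on $\R$ behaves like $c_\pm\,|x|^{-2s}$ as $x\to\pm\infty$, where $s$ is the Bargmann parameter. For the unitary principal series $s=\tfrac12+i\nu$ this gives $|f(x)|\sim|x|^{-1}$, so $\int_\R |f|\,dx=\infty$ and even the symmetric regularized limit $\lim_{R\to\infty}\int_{-R}^R f\,dx$ fails to exist (it diverges logarithmically at $\nu=0$ and oscillates for $\nu\ne0$). Thus Lebesgue integration is not a continuous functional on $H^\infty$ for the principal series, and your candidate $\mathcal D$ is undefined exactly where the proof is needed.

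\smallskip
\textbf{How to repair it.}
Work directly in the circle model, where $H^\infty=C^\infty(S^1)$ with its usual Fréchet topology, and note that $U^t$ fixes a unique point $p_\infty\in S^1\cong\partial\mathbb H$. Because the Jacobian cocycle $|cx+d|^{-2s}$ for an upper–triangular unipotent is identically $1$, the dual action of $U^t$ on distributions fixes $\delta_{p_\infty}$ on the nose:
\[
\bigl(\pi'(u^t)\,\delta_{p_\infty}\bigr)(f)\;=\;\bigl(\pi(u^{-t})f\bigr)(p_\infty)\;=\;f(u^t\!\cdot p_\infty)\;=\;f(p_\infty).
\]
Since $\delta_{p_\infty}$ is manifestly continuous on $C^\infty(S^1)$ and non-zero, it is the desired element of $(H^\infty)'$. (In the line model this functional is not Lebesgue measure but the boundary limit $\lim_{x\to\infty}|x|^{2s}f(x)$.) For the complementary series the same argument applies verbatim. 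For the holomorphic (and antiholomorphic) discrete series, realize the representation on the disc; the unipotent again fixes a boundary point, and the invariant functional is the boundary value of the holomorphic smooth vector at that point (alternatively, in the Kirillov model on $\R_+$ where $U^t$ acts by multiplication by $e^{it\xi}$, the invariant functional is the boundary evaluation at $\xi=0$). In each case one invariant distribution suffices for the Lemma; \cite{FF1} in fact determines the full space of $U^t$-invariants in each $H^\infty$ (two-dimensional for principal and complementary series, one-dimensional for discrete series) by the same circle of ideas, combined with an explicit Sobolev-order computation.

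\smallskip
In short: the claim of rapid decay is what would make your proof work, and it is exactly what fails; replacing Lebesgue measure by the Dirac mass at the fixed point of $U^t$ in the compact model both closes the gap and shortens the argument.
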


\begin{proposition}
  \label{prop:4:2}
  Let $G$ be a semi-simple group and let $G/D$ be a finite volume
  space. Suppose that~$\flow_\R$ is a quasi-unipotent subgroup of $G$
  such that the flow of $\flow_\R$ on $G/D$ is ergodic and commutes
  with the action of a finite group $\finitegp$ of affine
  diffeomorphisms of $G/D$.  Then, there exists infinitely many
  independent $\flow_\R$-invariant distributions on $C^\infty(G/D)$ of
  Sobolev order $1/2$ which are also $\finitegp$-invariant.
\end{proposition}

\begin{proof}
  By Proposition~\ref{prop:lattice_red} we may assume that $G$ has
  finite center and that $D$ is a lattice in $G$.  By the Jordan
  decomposition we can write $\flow_t= c_t \times u_t$ where $c_t$ is
  semi-simple and $u_t$ is unipotent with $c_\R$, $u_\R$ commuting
  one-parameter subgroups of~$G$. Since $c_\R$ is semi-simple and
  quasi-unipotent, its closure in $G$ is a torus $T$.
  
  By Lemma~\ref{lem:homogflow3:1}, the group of affine diffeomorphims
  $\finitegp$ acts by conjugation by automorphism of the Lie algebra
  $\mathfrak g$ of $G$ fixing the subgroup $\flow_\R$.  By the
  uniqueness of the Jordan decomposition of $\flow_\R$, the the action
  by conjugation of $\finitegp$ fixes the subgroups $c_\R$ and $u_t$,
  and consequently it fixes the torus $T$, closure of $c_\R$ in $G$.
  Let $\hat \finitegp$ be the compact Abelian group of affine
  diffeomorphisms of $G/D$ generated by $\finitegp$ and left
  translations by elements of torus $T$.

  By the generalized version of the Jacobson--Morozov
  lemma~\ref{lem:Jacobson-Morozov} we can find $\sl_2(\R)$ triple $(a,
  n^+, n^-)$ invariant under $\hat \finitegp$. Thus the action by left
  translations on $G/D$ of the analytic group $S$ generated by the
  triple $(a, n^+, n^-)$ commutes with the action of $\hat\finitegp$.

  It is well known that the center $Z(S)$ of $S$ is finite and that,
  consequently, there exists a maximal compact subgroup~$K\approx S^1$
  of~$S$ containing $Z(S)$. (Indeed, the adjoint representation
  $\Ad_G|S$ of $S$ on the Lie algebra of $G$, as a finite dimensional
  representation of $S$, factors through $SL(2, \R)$, a double cover
  of $S/Z(S)$. The kernel of $\Ad_G|S$ is contained in $Z(G)$, because
  $G$ is connected. Since $Z(S)$ is monogenic, we have that $Z(G)$ is
  a subgroup of index one or two of $Z(G)Z(S)$. It follows that $Z(S)$
  is finite).

  The group $T^{(0)}=\hat \finitegp \cdot K$ is a compact, Abelian
  group of affine transformation of $G/D$ whose connected component of
  identity is a torus. It follows that the double coset space
  $T^{(1)}\backslash G/D$ is a non trivial orbifold and that the space
  $H^{(0)}$ of $L^2$ functions on $G/D$ which are invariant under the
  action of $T^{(0)}$ has infinite dimension. The space $H^{(0)}$ is
  contained in the space $H_{\finitegp}$ of $L^2$ functions on $G/D$
  invariant under $Z(S)$ and $\hat \finitegp$, space on which the
  group $S$ acts unitarily.
  
  As the center $Z(S)$ of $S$ acts trivially on $H_\finitegp$, the
  Hilbert space $H_\finitegp$ decompose as a direct integral $\int
  H_\alpha \D\nu_\alpha$ of irreducible unitary representations
  $H_\alpha$ of ${\mathrm P\mathrm S\mathrm L}_2(\R)$, where
  $\nu_\alpha$ is a measure on the unitary dual of ${\mathrm P\mathrm
    S\mathrm L}_2(\R)$. Since every irreducible unitary representation
  of ${\mathrm P\mathrm S\mathrm L}_2(\R)$ contains at most one
  $K$-invariant vector and since the space $H^{(0)}\subset
  H_\finitegp$ of $K$ invariant vectors is infinite dimensional, we
  deduce that the measure $\nu_\alpha$ has an infinite support, i.e.\
  that the space $H_\finitegp$ is not a finite sum of irreducible
  unitary representations of ${\mathrm P\mathrm S\mathrm L}_2(\R)$.

  By the previous Lemma each unitary irreducible representation
  $H_\alpha$ of ${\mathrm P\mathrm S\mathrm L}_2(\R)$ occurring in the
  support of $\nu_\alpha$ contains a distribution $D_\alpha\in
  (H^\infty)'$ of Sobolev order~$1/2$ which is $u_t$-invariant. Since
  $H_{\finitegp}$ consists of functions that are $\hat
  \finitegp$-invariant, this distribution is also invariant by
  translations $\flow_t=c_tu_t$ and by the affine maps in~$\finitegp$.
  Since the space $H^\infty_\finitegp$ coincides with the Fr\'echet
  space of $C^\infty$ functions on $G/D$ which are $T \cdot Z(S)$
  invariant as well as $\finitegp$-invariant, the proposition is
  proved.
\end{proof}

\subsection{The solvable quasi-unipotent case}
\label{sec:solv-eucl-case}

In this subsection we assume that the group $G=R$ is solvable and the
one-parameter subgroup $\flow_\R$ is quasi-unipotent and ergodic on
the finite volume space $R/D$.

\smallskip We recall the following definition.
\begin{definition} A solvable group $R$ is called a \emph{class
    $(\mathrm I)$ group} if, for every $g\in R$, the spectrum of
  $\Ad(g)$ is contained in the unit circle~$U(1)=\{z \in \C \mid
  |z|=1\}$.
\end{definition}

It will also be useful remark that if $R$ is solvable and $R/D$ is a
finite measure space, then we may assume that $R$ is simply connected
and that $D$ is a quasi-lattice (in the language of Auslander and
Mostow, the space $R/D$ is then a \emph{presentation}); in fact, if
$\tilde R$ is the universal covering group of $R$ and $\tilde D$ is
the pull-back of $D$ to $\tilde R$, then the connected component of
the identity $\tilde D_0$ of $\tilde D$ is simply connected
\cite[Them. 3.4]{gorbatsevich1994lie}; hence $R/D\approx \tilde R
/\tilde D \approx R'/ D' $, with $R'=\tilde R/\tilde D_0$ solvable,
connected and simply connected and with $D'=\tilde D/\tilde D_0$ a
quasi-lattice.

We also recall the construction, originally due to Malcev and
generalized by Auslander \emph{et al.}, of the \emph{semi-simple} or
\emph{Malcev splitting} of a simply connected, connected solvable
group $R$ (see \cite{MR0199308}, \cite{MR0486307}, \cite{MR0486308},
\cite{0453.22006}).

A solvable Lie group $G$ is \emph{split} if $G=N_G\rtimes T$ where
$N_G$ is the nilradical of $G$ and $T$ is an Abelian group acting on
$G$ faithfully by semi-simple automorphisms.

A \emph{semi-simple} or \emph{Malcev splitting} of a connected simply
connected solvable Lie group $R$ is a split exact sequence
\[
0 \to R \overset{m}{\to}M(R) \leftrightarrows T \to 0
\]
embedding $R$ into a split connected solvable Lie group $M(R)=
N_{M(R)} \rtimes T$ such that $M(R)= N_{M(R)} \cdot m(R)$; here
$N_{M(R)}$ and $T$ are as before. The image $m(R)$ of~$R$ is normal
and closed in $M(R)$ and it will be identified with $R$.

The semi-simple splitting of a connected simply connected solvable Lie
group $R$ is unique up to an automorphism fixing $R$.

Let $\operatorname{Aut}(\mathfrak r)\approx \operatorname{Aut}(R)$ be
automorphism group of the Lie algebra $\mathfrak r$ of $R$. The
adjoint representation $\Ad$ maps the group $R$ to the solvable
subgroup $\Ad(R)<\operatorname{Aut}(\mathfrak r)$; since
$\operatorname{Aut}(\mathfrak r)$ is an algebraic group we may
consider the Zariski closure $\Ad(R)^*$ of~$\Ad(R)$. The group
$\Ad(R)^*$ is algebraic and solvable, since it's the algebraic closure
of the solvable group $\Ad(R)$.  It follows that $\Ad(R)^*$ has a
Levi-Chevalley decomposition $\Ad(R)^* = U^*\rtimes T^*$, with $T^*$
an Abelian group of semi-simple automorphisms of $\mathfrak r$ and
$U^*$ the maximal subgroup of unipotent elements of $\Ad(R)^*$.

Let $T$ be the image of $\Ad(R)$ into $T^*$ by the natural projection
\hbox{$\Ad(R)^*\to T^*$}.  Since $T$ is a group of automorphisms of
$R$, we may form the semi-direct product $M(R)= R\rtimes T$.  By
definition we have a split sequence
\[
0 \to R \to M(R)\leftrightarrows T \to 0.
\]
It can be proved that $M(R)$ is a split connected solvable group
$N_{M(R)} \rtimes T$ and it is the semi-simple splitting of $R$ (see~\
loc.\ cit.). We remark that the splitting $M(R)=N_{M(R)} \rtimes T$
yields two projections maps $\tau\colon M(R)\to T$ and $\pi\colon
M(R)\to N_{M(R)}$ defined, for any $g\in M(R)$, by
\begin{equation}
  \label{eq:quasi-unip4:1}
  g = \tau (g) \pi(g), \qquad \tau (g)\in T,\quad \pi(g)\in N_{M(R)} \,.
\end{equation}
The projection $\tau $ an epimorphim. Composing $\tau$ with the
inclusion $R\to M(R)$ we obtain a surjective homomorphism $p:R\to T$.

The proof of the following easy lemma is omitted.

\begin{lemma}
  \label{lem:quasi-unip4:1}
  Let $G$ be a Lie group, $\Hgroup$ a subgroup of automorphisms of $G$
  and let $\hat G$ be the semi-direct product $G\rtimes \mathcal
  H$. Then any group $\finitegp$ of automorphisms $A\in \Aut(G)$
  normalizing~$\Hgroup$ extends to a group of automorphisms $\hat A\in
  \Aut(\hat G)$ by setting
  \[
  \hat A (g\cdot H) = A(g)\cdot( A H A^{-1}), \qquad \text { for all
  }A\in \finitegp
  \]
  Hence every group $\finitegp$ of affine maps $\map=uA$ of $G$, such
  that $A$ normalizes $\Hgroup$ for all $\map \in \finitegp$, extends
  to a group of affine maps $\hat \map$ of $\hat G$, defined by $\hat
  \map = u \cdot \hat A$.
\end{lemma}

\begin{proposition}
  \label{prop:quasi-unip4:2}
  Let $\finitegp$ be a finite group of automorphisms of a connected,
  simply connected, solvable Lie group $R$. For any semi-simple
  splitting $M(R)$ of $R$ the group~$\finitegp$ extends to a group of
  automorphisms of $M(R)$, that is there is a homomorphism $A\in
  \finitegp\mapsto \hat A\in\Aut(M(R))$ such that $\hat A= A$ on $R$.
\end{proposition}
\begin{proof}
  By the previous lemma, it suffice to show that, if $M(R)=R\rtimes
  T$, the group~$T$ operates on $R$ by a group of automorphisms
  normalized by $\finitegp$.

  Since, for any $A\in \Aut(R)$, we have $A\Ad(r)A^{-1} =\Ad (A(r))$,
  the group $\Ad(R)$ and its Zariski closure $\Ad(R)^*$ are normal in
  $\Aut(R)$. Let us show that if $\Ad(R)^* = U^*\rtimes T^*$ is a
  Levi-Chevalley decomposition of $\Ad(R)^*$, then the torus $T^*$ is
  normalized by any finite group $\finitegp$ of automorphisms of $R$.

  In fact since any two tori in $\Ad(R)^*$ are conjugate in
  $\Ad(R)^*$, for any $A\in \finitegp$ there exists an element $u_A\in
  U^*$ such that $A T^* A^{-1} = u_A T^* u_A^{-1}$. Thus, if we denote
  by $\operatorname{Norm}_{\,U^*}(T^*)$ the normalizer of $T^*$ in
  $U^*$ we obtain a homomorphism $\finitegp\to
  U^*/\operatorname{Norm}_{\,U^*}(T^*)$. Since the group $\finitegp$
  is finite and $U^*/\operatorname{Norm}_{\,U^*}(T^*)$ a real
  unipotent algebraic group, this homomorphism is trivial. Thus, for
  any $A\in \finitegp$, we have $A T^* A^{-1} = T^*$.

  By definition, for any $r\in R$ such that $\Ad(r) = u\cdot t$, with
  $u\in U^*$ and $t\in T^*$, the projection of $\Ad(r)$ in $T^*$ is
  equal to the element $t$. For all $A\in \finitegp$, we have $A
  \Ad(r) A^{-1} = A u A^{-1}\cdot A t A^{-1}$. Since $A t A^{-1}\in
  T^*$, we derive that the projection of $A \Ad(r) A^{-1}$ in $T^*$ is
  equal to $A t A^{-1}$. We have therefore proved that $A t A^{-1}\in
  T$, for all $t\in T$ and all $A\in \finitegp$, concluding the proof.
\end{proof}

It is useful to recall a part of Mostow's structure theorem for
solvmanifolds, as reformulated by Auslander \cite[IV.3]{MR0486307}
and~\cite[p.~271]{MR0486308}:
 
\begin{theorem}[Mostow, Auslander]
  \label{thm:mostow-auslander}
  Let $D$ be a quasi-lattice in a simply connected, connected,
  solvable Lie group $R$, and let $M(R)=R\rtimes T$ be a semi-simple
  splitting of $R$. Then $T$ is a closed subgroup of
  $\operatorname{Aut}(\mathfrak r)$ and the projection $\tau(D)$ of
  $D$ in $T$ is a lattice of $T$. If $R$ is class (I), the semi-simple
  splitting $M(R)$ of $R$ admits a structure of real algebraic group,
  with $N_{M(R)}$ its unipotent radical and $T$ a maximal torus acting
  on $N_{M(R)}$ by semi-simple automorphisms.
\end{theorem}

The following theorem was first proved in \cite[Thm.\ 4.4]{MR0199308}
under the hypothesis that $D\cap N_{M(R)}= D$. This amounts to suppose
that $D$ is nilpotent, which is the case when $R/D$ supports a minimal
flow, as it is proved in \cite[Thm.~C]{MR0486308}.  A simplification
of the latter proof under the hypothesis that $R/D$ carries an
er\-go\-dic flow appears in \cite[Theorem 7.1]{Starkov}.

\begin{theorem}[Auslander, Starkov]
  \label{thm:auslander-starkov}
  Let $\flow_\R$ be an ergodic flow on a class (I) compact solvable
  manifold $R/D$.  There exists a semi-simple splitting $M(R)=R\rtimes
  T= N_{M(R)}\rtimes T$ of $R$ such that $D < N_{M(R)}$ and the
  projection map $\pi \colon M(R)\to N_{M(R)}$ induces a
  diffeomorphism of $R/D$ onto the compact nilmanifold $ N_{M(R)}/D$
  conjugating the flow $\flow_\R$ to a nilflow.
\end{theorem}

In the following lemma we show that, for our purposes, we may assume
that $R$ is a class (I) solvable group and give a new proof that $D$
is a subgroup of the unipotent radical of the algebraic splitting.

\begin{lemma}
  \label{lem:4:3}
  If the flow $\flow_\R$ is ergodic and quasi-unipotent on the finite
  volume solvmanifold $R/D$, then the group $R$ is of class (I).
\end{lemma}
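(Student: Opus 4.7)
The plan is to use the Mostow--Auslander semi-simple splitting to reduce the lemma to showing that the exterior Abelian group $T$ associated to $R$ is compact, and then to derive this compactness from the quasi-unipotence of $X$ combined with the ergodicity of the induced flow on $T/\pi(D)$. The main subtlety I anticipate is the identification of $d\pi(X)$ with the Jordan semi-simple part of $\ad X$; once this is in place, the rest is a direct ergodicity argument on a torus.

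By Theorem~\ref{thm:mostow-auslander}, $\pi(D)$ is a lattice in $T$, so the surjective homomorphism $\pi\colon R\to T$ descends to a $g^\R$-equivariant projection $\bar\pi\colon R/D\to T/\pi(D)$ onto a compact homogeneous space, on which the flow is the translation flow generated by $d\pi(X)\in\mathfrak t$. By the construction of the Malcev splitting (writing $m(r)=n\cdot t$ with $n\in N_{M(R)}$ and $t\in T$, the adjoint $\Ad_R(r)$ factors on $\mathfrak r$ as the product of the commuting unipotent $\Ad_{M(R)}(n)$ and semi-simple $\Ad_{M(R)}(t)$), the element $d\pi(X)$ is the Jordan semi-simple part of $\ad X$ inside $\operatorname{End}(\mathfrak r)$, and $\pi(r)$ represents the Jordan semi-simple part of $\Ad(r)$. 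I then observe that $R$ is of class $(\mathrm I)$ if and only if $T$ is compact: the eigenvalues of $\Ad(r)$ coincide with those of its semi-simple part $\pi(r)\in T$, so class $(\mathrm I)$ amounts to every element of $T$ having spectrum in $U(1)$; as the Abelian family $T$ consists of commuting semi-simple automorphisms of $\mathfrak r$, it is simultaneously diagonalizable, and unitary spectrum makes it a bounded, closed subgroup of $\GL(\mathfrak r)$ (using closedness from Theorem~\ref{thm:mostow-auslander}), hence compact.

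The connected Abelian Lie group $T$ splits as $T_c\cdot T_s$, with $T_c$ its maximal compact subgroup and $T_s\cong\R^k$ a vector subgroup; correspondingly $\mathfrak t=\mathfrak t_c\oplus\mathfrak t_s$, where $\mathfrak t_c$ (resp.\ $\mathfrak t_s$) consists of the elements acting on $\mathfrak r$ with purely imaginary (resp.\ real) eigenvalues. Quasi-unipotence of $g^\R$ forces $\ad X$, and hence $d\pi(X)$, to have purely imaginary eigenvalues on $\mathfrak r$, so $d\pi(X)\in\mathfrak t_c$. Projecting $T/\pi(D)$ further onto $T/(T_c\cdot\pi(D))\cong\R^k/\Lambda$, a $k$-torus because compactness of $T/\pi(D)$ makes $\Lambda$ a full-rank lattice in $T/T_c\cong\R^k$, the pushed-forward flow has generator equal to the image of $d\pi(X)$ in $\mathfrak t/\mathfrak t_c\cong\R^k$, which vanishes. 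Ergodicity of $g^\R$ on $R/D$ pushes forward to ergodicity of this trivial translation flow on the $k$-torus, which is possible only if $k=0$; therefore $T=T_c$ is compact, and by the characterization above $R$ is of class $(\mathrm I)$.
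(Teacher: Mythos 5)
Your proposal is correct and follows essentially the same route as the paper: reduce to the simply connected/quasi-lattice setting, invoke the Malcev splitting together with the Mostow--Auslander theorem to get the compact base $T/\pi(D)$, observe that quasi-unipotence puts the semi-simple part of $\operatorname{ad}X$ in the ``compact direction,'' and use ergodicity to conclude that $T$ itself is compact. The paper's proof closes the argument by showing that the closure $T'$ of $\pi(g^\R)$ acts transitively on $T/\pi(D)$ and therefore $T'=T$; you instead pass to the quotient $T/(T_c\cdot\pi(D))$ and show the induced flow is trivial, forcing $T_s$ to vanish --- a dual formulation of the same step. One small caution: your blanket characterization of $\mathfrak t_s$ as ``elements acting with real eigenvalues'' need not hold for an arbitrary closed connected abelian subgroup of $\GL(\mathfrak r)$ (a vector factor can have eigenvalues with nonzero real \emph{and} imaginary parts), but you never use that direction; all you need is that any $Y\in\mathfrak t$ with purely imaginary spectrum generates a relatively compact one-parameter group and hence lies in $\mathfrak t_c$, which is correct since $T$ is closed.
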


\begin{proof}
  We may assume $R$ simply connected and connected and $D$ a
  quasi-lattice. Let $M(R)=N_{M(R)} \rtimes T = R\rtimes T$ be the
  semi-simple splitting of $R$.  Since the one-parameter subgroup
  $\flow_\R$ is quasi-unipotent, the closure of the projection
  $\tau(\flow_\R)$ in the semi-simple factor $T
  <\operatorname{Aut}(\mathfrak r)$ is a compact torus $T' < T$. The
  surjective homomorphism $\tau:R\to T$ induces a continuous
  surjection $R/D \to T/\tau(D)$. By Mostow's structure Theorem $R/D $
  is compact hence $T/\tau(D)$ is a compact torus.  The orbits of $T'$
  in $T/\tau(D)$ are finitely covered by $T'$. But $\flow_\R$ acts
  ergodically on $R/D$, hence $T'$ acts ergodically and minimally on
  $T/\tau(D)$. It follows that $T/\tau(D)$ consists of a single $T'$
  orbit and, since $T', T$ are both connected and $T'<T$ and $T' \to
  T'/\tau(D)$ is a finite cover, we obtain that $T'=T$. Thus $T$
  consist of quasi-unipotent automorphisms of $\mathfrak r$, which
  implies that for all $g\in R$, the automorphism $\Ad(g)$ is
  quasi-unipotent. Hence the group $R$ is of class (I).

  The subgroup $\tau(D)$ is a finite group commuting with an ergodic
  one-parameter flow on the torus $T$. It follows that $\tau(D)$ is
  trivial, i.e.\ that $D$ is included in the unipotent radical
  $N_{M(R)}$ of the (algebraic) semi-simple splitting $M(R)$ of $R$.
\end{proof}

\begin{proposition}
  \label{prop:quasi-unip4:1}
  Let $R$ be a solvable, simply connected, connected Lie group and~$D$
  a quasi-lattice in $R$. Let $\flow_\R< R$ be a one-parameter group
  and let $\finitegp$ be a finite group of affine maps of $R/D$
  commuting with $\flow_\R$.  Assume that the flow of $\flow_\R$ is
  ergodic and quasi-unipotent.  Then there exists a semi-simple
  splitting $M(R)=R\rtimes T= N_{M(R)}\rtimes T$ of $R$ with the
  following properties
  \begin{itemize}
  \item We have $D < N_{M(R)}$. Hence the projection map $\pi \colon
    M(R)\to N_{M(R)}$, defined by \eqref{eq:quasi-unip4:1}, is a the
    identical group isomorphism of $D$.
  \item The map $\pi$ induces a diffeomorphism of $R/D$ onto the
    compact nilmanifold $ N_{M(R)}/D$ conjugating the flow $\flow_\R$
    to a nilflow $u_\R$ on $ N_{M(R)}/D$
  \item The group $\finitegp$ projects via $\pi$ to a finite group
    $\overline{\finitegp}$ of affine maps on $ N_{M(R)}/D$ commuting
    with $u_\R$.
  \end{itemize}
\end{proposition}
\begin{proof}
  The flow $\flow_\R$ is ergodic and quasi-unipotent on the finite
  volume solvmanifold $R/D$ hence, by Lemma~\ref{lem:4:3}, the group
  $R$ is of class (I). The theorem of Mostow and
  Auslander~\ref{thm:mostow-auslander} states that $R$ embeds into the
  algebraic solvable group $M(R)$. Thus, we may consider, for all
  $t\in \R$, the Jordan decomposition $\flow_t= a_t u_t$ of the
  element $\flow_t\in M(R)$: here $a_\R$ and $u_\R$ are commuting
  one-parameter groups, respectively, of semi-simple and unipotent
  elements.  Since every semisimple element is included in a torus of
  $M(R)$, the Levi-Chevalley decomposition $M(R) = N_{M(R)} \rtimes
  T$, may be chosen so that $a_\R< T$. Since $R$ is a class (I)
  solvable group, the torus $T$ is the closure of the one-parameter
  group $a_\R$.

  For any $\map= gA\in \finitegp$ set $A_{\map}:=A$.  By the
  Proposition~\ref{prop:quasi-unip4:2}, the finite group of
  automorphisms of $R$ defined by $\{ A_{\map}\in\Aut(R)\mid \map\in
  \finitegp\}$ extends to a finite group $\{ \hat
  A_{\map}\in\Aut(M(R))\mid \map\in \finitegp\}$.

  Let $\map= gA \in \finitegp$. As the map $\map$ commutes with the
  one-parameter group $\flow_\R$ for all $t\in \R$ we have the
  identity $g A (\flow_t) g^{-1} = \flow_t=g \hat A (\flow_t) g^{-1}$,
  which implies that the affine map $\hat \map:=g\hat A$ of $M(R)$
  commutes with the flow of left translation by the one-parameter
  group $\flow_\R$ on $M(R)$. This identity also implies, by the
  uniqueness to the Jordan decomposition, the identities
  \[
  g \hat A (a_t) g^{-1} = a_t\qquad\qquad g\hat A (u_t) g^{-1} = u_t\
  \]
  and therefore
  \begin{equation}
    \label{eq:quasi-unip4:2}
    g \hat A (z) = z g,\quad \text{for all $z\in T$}.
  \end{equation}
  Since the affine maps $\map=gA\in\finitegp$ induce affine maps of
  $R/D$ we have $A(D)=D$, hence $\hat A (D)=D$.  Thus the affine map
  $\hat \map$, passes to the quotient $M(R)/D $. Remark also that,
  since $N_{M(R)}$ is the unipotent radical of $M(R)$, any
  automorphism of $M(R)$ maps $N_{M(R)}$ to itself.

  It is now easy to prove Auslander and Green theorem: the map
  $p\colon R\to N_{M(R)}$ induces a diffeomorphism $\bar p\colon R/D
  \to N_{M(R)}/D$ intertwining the flow of left translation by
  $\flow_\R$ with the flow of left translations by $u_\R$.  Let us
  show that the diffeomorphism~$\bar p$ also intertwines the group of
  affine maps $\finitegp$ with a group of affine maps
  $\overline{\finitegp}:=\{\overline\map=\pi( g) \hat A\mid gA\in
  \finitegp\}$ of $N_{M(R)}/D$. Let $\map=gA\in\finitegp$ and set
  $\overline\map=\pi( g) \hat A$; for any $h\in R$ we have
  \[
  \begin{split}
    \bar p(\map(h D)) &= \pi( g A(h) D) = \pi\Big(g A\big( \tau(h)
    \pi(h)\big) D\Big)\\&= \pi\Big(\hat A\big( \tau(h)\big) g\hat
    A\big(\pi(h)\big)D\Big)= \pi( g) \hat A\big(\pi(h)D\big)
    =\overline\map (\bar p(hD))
  \end{split}
  \]
  Since the action by left translation by the one-parameter group
  $\flow_\R< R$ on $R/D$ commutes with the action of the group
  $\finitegp$, and since these actions are mapped by $\bar p$,
  respectively, to the action by left translation by the one-parameter
  group $u_\R< R$ and to the action of the group
  $\overline{\finitegp}$ of affine maps of $N_{M(R)}/D$, the proof is
  completed.
\end{proof}

\begin{lemma}
  \label{lemma:finite_order_transl}
  Any finite order affine diffeomorphism $\map$ of a nilmanifold which
  commutes with an ergodic flow is a translation by an element of the
  center.
\end{lemma}
\begin{proof} Let $\map=gA$ be an affine map of a nilmanifold $N/D$
  which commutes with an ergodic flow $\flow_\R$ on $N/D$.  Let $\bar
  N := N/[N,N]$ and $\bar D= [D,D]$. Since $[D,D]$ and $[N,N]$ are
  characteristic groups of $D$ and $N$, respectively, the affine map
  $\map$ yields, by projection, an affine map $\bar \map$ of $\bar N/
  \bar D$. The map $\bar \map$ on the torus $\bar N/ \bar D$ commutes
  with the ergodic flow $\bar \flow_\R$ on $\bar N/ \bar D$,
  projection of the flow $\flow_\R$ on $N/D$. If an affine map of a
  torus commutes with an ergodic flow, then its automorphism part is
  the identity, since a toral automorphism fixing a vector with
  rationally independent coordinates is the identity.

  Let $A_\ast \in \Aut(\mathfrak n)$ denote the automorphism of the
  Lie algebra $\mathfrak n$ of $N$ induced by $A\in \Aut(N)$.

  Let $\{ \mathfrak n^{(k)}\}$ denote the descending central series of
  $\mathfrak n$ defined by induction as $\mathfrak n^{(0)}= \mathfrak
  n$ and $\mathfrak n^{(k+1)}= [\mathfrak n^{(k)}, \mathfrak n]$ for
  all $k\in \N$.

  Since $A_\ast$ projects to the identity map on the Abelianized Lie
  algebra $\mathfrak n/[\mathfrak n,\mathfrak n]$, we can write
  $A_\ast=I+ L_1$ for some linear map $L_1\colon \mathfrak n \to
  \mathfrak n^{(1)}$. Assume, by recurrence on $i$, that we have
  $A_\ast=I+ L_i$ where $L_i$ is a linear map from $\mathfrak n$ to
  $\mathfrak n^{(i)}$, Then, for $x=[y,z]$ with $y\in \mathfrak
  n^{(i-1)}$ and $z\in \mathfrak n $, we have
  \[
  A_\ast x = [A_\ast y,A_\ast z]= [y+L_iy, z+L_i z]= x +x'
  \]
  with $x'\in \mathfrak n^{(i+1)}$.  It follows that $A_\ast=I+
  L_{i+1}$ where $L_{i+1}$ is a linear map from $\mathfrak n$ to
  $\mathfrak n^{(i+1)}$. For $i$ equal to the degree of nilpotency of
  $N$, we conclude that $A_\ast$ is the identity automorphism of
  $\mathfrak n$ and that the affine map $\map$ is a translation.

  \smallskip We claim that any finite order translation of a
  nilmanifold is a translation by an element of the center.  In fact,
  a translation of a nilmanifold $N/D$ by an element $m\in N$ is equal
  to the identity if and only if
  \[
  n^{-1} m n \in D, \quad \text{ for all }\, n \in N.
  \]
  It follows that $m\in D\cap Z(N)$, where $Z(N)$ is the center of
  $N$.

  Thus if the $k$-th power of a translation by $m\in N$ is the
  identity of $N/D$ we have that $m^k\in D\cap Z(N)$. Since the
  exponential map is onto, we conclude that $m$ belongs to the center
  $Z(N)$ as claimed.
\end{proof}

\begin{lemma} 
  Any translation of a non-toral nilmanifold has infinitely many
  invariant distributions of Sobolev order $1/2$.
\end{lemma}

\begin{proof} Since the exponential map of any nilpotent group is
  surjective, every translation of a nilmanifold is the return map
  (with constant return time) of a nilflow. Since the suspension of a
  non-toral nilmanifold by a translation is non-toral, the suspension
  flow, hence its return map, has infinitely many independent
  invariant distributions of Sobolev order $1/2$.
\end{proof}
 
\begin{lemma}
  \label{lem:4quasi-unip:1}
  Let $M$ be a closed connected submanifold of a torus $\T^d$
  transverse to a linear minimal flow $(\flow_t)$ such that the return
  time of the flow to $M$ is everywhere constant (assume is equal to
  $1$). Then $M$ is a subtorus and the map $\flow_1$ is a constant
  translation on this subtorus.
\end{lemma}
\begin{proof}
  Let $x\in M$ and let $x_n = \flow_n (x)$. Since the translation
  $\flow_n$ maps $M$ into itself, we have that $T_x M = T_{x_n} M
  \subset \R^d$.  Since the set ${x_n}$ is dense in $M$, by continuity
  ($M$ needs to be at least $C^1$) we have that $T_y M =E\subset \R^d$
  is a constant space $E$ for all $y\in M$.  It follows that $M$
  coincides locally with a translate of the projection of $E$ to the
  torus, hence there exits a translate $T'$ of the projection of $E$
  to the torus such that the set $T'\cap M$ non-empty, open and
  closed. Since $M$ is connected it follows that $M=T'$ and since $M$
  is closed, it is a subtorus.
\end{proof}

The first two authors have proved that the main theorem holds for
general nilflows, that is, that the following result holds:
\begin{theorem}[\cite{FF}]
  \label{thm:flaminio-forni}
  An ergodic nilflow which is not toral, has countably many
  independent invariant distributions of Sobolev order $1/2$.
\end{theorem}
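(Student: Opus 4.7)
The plan is to exploit Kirillov's orbit method for simply connected nilpotent Lie groups, together with the Howe--Richardson decomposition of $L^2(N/\Gamma)$, to produce an invariant distribution in each non-trivial irreducible component where the nilflow acts. The non-toral hypothesis means that $N$ has step at least two, i.e.\ the commutator subgroup $[N,N]$ is non-trivial; hence the natural projection $N/\Gamma \to N/\Gamma[N,N]$ onto a maximal toral quotient has non-trivial kernel, and $L^2(N/\Gamma)$ splits into a toral part and a genuinely non-abelian part of infinite multiplicity.

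First I would write
\[
L^2(N/\Gamma) \;=\; \bigoplus_{\mathcal O} m(\mathcal O)\, H_{\mathcal O},
\]
where the sum runs over $\Gamma$-rational coadjoint orbits $\mathcal O \subset \mathfrak n^*$, with $H_{\mathcal O}$ the irreducible unitary representation of $N$ attached to $\mathcal O$ by Kirillov, and with multiplicities $m(\mathcal O)$ prescribed by the Howe--Richardson--Corwin--Greenleaf formula. The trivial orbit yields the toral factor $L^2(N/\Gamma[N,N])$, on which the flow reduces to the Diophantine toral flow in the base. Since $[N,N] \neq \{e\}$, the center $Z$ of $N$ is non-trivial and there are countably infinitely many distinct $\Gamma$-rational coadjoint orbits carrying non-trivial central characters, each of which appears in the decomposition with multiplicity at least one.

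Next, for each such orbit $\mathcal O$, Kirillov's model realises $H_{\mathcal O}$ on $L^2(\R^k)$ for some $k \ge 1$, so that elements of the Lie algebra $\mathfrak n$ act as first-order differential operators with polynomial coefficients. The generator $X$ of the flow is then represented by a first-order operator $\mathcal D_{\mathcal O}$, whose adjoint equation $\mathcal D_{\mathcal O}^\ast u = 0$ can be integrated in closed form: one solves an ODE along an affine coordinate direction and places a Dirac-type current in the complementary directions, obtaining a non-zero tempered solution $u_{\mathcal O} \in \mathcal S'(\R^k)$. Pairing against $u_{\mathcal O}$ defines an $X$-invariant continuous linear functional on the smooth vectors $H_{\mathcal O}^{\infty}$, which extended by zero on the orthogonal complement yields an invariant distribution $D_{\mathcal O} \in \mathcal D'(N/\Gamma)$. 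Since the $D_{\mathcal O}$ are supported on pairwise orthogonal $N$-isotypic components of $L^2(N/\Gamma)$, they are linearly independent, and we have countably many of them by the count of orbits in the previous paragraph.

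The main obstacle is to verify that the functional $f \mapsto \langle f, u_{\mathcal O} \rangle$ is continuous in the $C^\infty(N/\Gamma)$ topology, not merely in the intrinsic Schwartz topology on the model $L^2(\R^k)$. This comes down to comparing the two topologies: the Schwartz seminorms on $\mathcal S(\R^k)$ must be dominated by polynomial expressions in the operators representing a Malcev basis of $\mathfrak n$, which in turn are controlled by Sobolev norms on $N/\Gamma$ via the standard elliptic regularisation on the nilmanifold (using the sub-Laplacian of a left-invariant basis). A secondary technical step is the quantitative control of multiplicities: one has to know that non-trivial coadjoint orbits occur infinitely often, which follows from the fact that when $[N,N] \neq \{e\}$ the Pontryagin dual of $Z \cap \Gamma \backslash Z$ contains infinitely many non-trivial characters, each of which lifts to a coadjoint orbit contributing with $m(\mathcal O) \ge 1$ to the decomposition above.
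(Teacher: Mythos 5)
Your proposal follows essentially the same strategy as the cited paper \cite{FF}: decompose $L^2(N/\Gamma)$ into $N$-irreducible components via the Kirillov orbit method together with the Howe--Richardson/Moore multiplicity formula, observe that the non-abelian hypothesis forces countably many components with non-trivial central character, construct an $X$-invariant distributional vector in each such component, and conclude linear independence from the mutual orthogonality of the isotypic subspaces. This is the framework of \cite{FF}, so at the level of the overall architecture you are reproducing the argument rather than offering an alternative.

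Where you are too cavalier is the central construction step, the claim that in each Kirillov model $\pi(X)^\ast u = 0$ ``can be integrated in closed form: one solves an ODE along an affine coordinate direction and places a Dirac-type current in the complementary directions.'' For step-two (Heisenberg-type) groups this is exactly right: $\pi(X)$ is of the form $a\,\partial_x + i p(x)$ with $a \in \R$ and $p$ a real polynomial of degree one, and the solution $e^{-\tfrac{i}{a}\int p}$ is bounded, hence tempered. For higher step, two things need to be checked. First, the polarization must be chosen so that $X$ appears as a genuine coordinate direction in the induced model; this is not automatic and requires choosing the polarizing subalgebra adapted to $X$ (a Vergne-type polarization), otherwise the first-order part of $\pi(X)$ is a vector field with non-constant coefficients and you face a transport equation rather than an ODE in a single variable. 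Second, once $\pi(X) = a\,\partial_{x_1} + i p(x)$ with $p$ polynomial of higher degree, skew-adjointness of the Lie-algebra action guarantees that $p$ is real, so $e^{-\tfrac{i}{a}\int p\,dx_1}$ is still bounded and tempered; but this is the reason the construction succeeds, and it should be stated rather than taken for granted. With those two points made explicit, and with the continuity estimate relating the Schwartz topology on the model to the $C^\infty(N/\Gamma)$ topology via the sub-Laplacian that you already flag, the argument closes. The degenerate case where $\pi(X)$ has no derivative component (i.e.\ $X$ lies in the radical of the orbit) is excluded for all but finitely many orbits by ergodicity, so it does not affect the countable count.
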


In conclusion we have

\begin{proposition}
  \label{prop:4:4}
  An ergodic quasi-unipotent flow on a finite volume solvmanifold is
  either smoothly conjugate to a linear toral flow or it admits
  countably many independent invariant distributions of Sobolev order
  $1/2$.
\end{proposition}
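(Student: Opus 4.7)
The proof is essentially a chain-together of the structural results just established. Let me outline the plan.

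The starting point is an ergodic quasi-unipotent flow $g^\R$ on the finite volume solvmanifold $R/D$. The first step is to apply Mostow's theorem (Theorem~\ref{thm:mostow}) to conclude that $R/D$ is in fact compact, which places us in the setting where all the solvable structure theory is available. As noted before Theorem~\ref{thm:mostow-auslander}, after passing to the universal cover and quotienting by the connected component of the identity of the pull-back of $D$, we may assume that $R$ is simply connected and that $D$ is a quasi-lattice, without changing the quotient $R/D$ or the flow.

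The second step is to invoke Lemma~\ref{lem:4:3}, which gives that $R$ is of class $(\mathrm{I})$. This is exactly the hypothesis required to apply the Auslander--Starkov theorem (Theorem~\ref{thm:auslander-starkov}), yielding a smooth isomorphism $\Phi: R/D \to N/\Gamma$ of the flow $(R/D, g^\R)$ with some nilflow $(N/\Gamma, h^\R)$ on a compact nilmanifold.

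The third step is the dichotomy. If the target nilflow $(N/\Gamma, h^\R)$ is a linear toral flow (i.e.\ $N$ is Abelian and hence $N/\Gamma$ a torus), then $\Phi$ realises the original flow as smoothly conjugate to a linear toral flow, which is the first alternative of the proposition. Otherwise, $(N/\Gamma, h^\R)$ is a non-toral ergodic nilflow, and the Flaminio--Forni theorem (Theorem~\ref{thm:flaminio-forni}) produces countably many independent $h^\R$-invariant distributions on $C^\infty(N/\Gamma)$. Pulling these back through $\Phi$ (the pull-back of a distribution by a diffeomorphism is a distribution, and invariance under $h^\R$ translates to invariance under $g^\R$, while linear independence is preserved), we obtain countably many independent $g^\R$-invariant distributions on $C^\infty(R/D)$. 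This yields the second alternative.

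There is no real obstacle here once the preceding machinery is in place: the argument is a clean assembly of Mostow's compactness theorem, Lemma~\ref{lem:4:3}, the Auslander--Starkov reduction to nilflows, and the main theorem of \cite{FF}. The only point that requires a word of attention is the behaviour of invariant distributions under the smooth conjugacy $\Phi$, which is routine since $\Phi^*$ is a topological isomorphism $C^\infty(N/\Gamma) \to C^\infty(R/D)$ intertwining the two Lie derivatives, and therefore its transpose is a topological isomorphism of the corresponding spaces of distributions carrying invariants to invariants.
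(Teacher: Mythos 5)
Your argument is exactly the chain the paper intends: Mostow's compactness, Lemma~\ref{lem:4:3} to get class $(\mathrm{I})$, the Auslander--Starkov reduction to a nilflow, and then the Flaminio--Forni theorem, with the routine remark that a smooth conjugacy transports invariant distributions. The paper presents Proposition~\ref{prop:4:4} as an immediate consequence of the preceding results (introducing it with ``In conclusion we have''), and your proof fills in that consequence correctly and in the same spirit.
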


%%% Local Variables: 
%%% mode: latex
%%% TeX-master: "InvDist_affine"
%%% End: 

\section{Partially hyperbolic homogeneous flows}
\label{sec:5}

In the non-compact, finite volume case, by applying results of
D.~Kleinbock and G.~Margulis we are able to generalize
Theorem~\ref{expandingminimal} to flows on \emph{semi-simple}
manifolds.  We think that it is very likely that a general partially
hyperbolic flow on any finite volume manifold has infinitely many
different minimal sets, but we were not able to prove such a general
statement.

For non-compact finite volume we recall the following result by
D.~Kleinbock and G.~Margulis \cite{MR96k:22022}

and its immediate corollary.

\begin{theorem}[Kleinbock and Margulis] \label{thm:kleinbock-margulis}
  Let $G$ be a connected semi-simple Lie group of dimension $n$
  without compact factors, $\Gamma$ an irreducible lattice in $G$. For
  any partially hyperbolic homogeneous flow $\flow_\R$ on $G/\Gamma$,
  for any closed invariant set $Z\subset G/\Gamma$ of (Haar) measure
  zero and for any nonempty open subset $W$ of $G/\Gamma$, we have
  that
  \[
  \dim_H(\{x\in W\;|\;\flow_\R x\;\mbox{is bounded
    and}\;\overline{\flow_\R x}\cap Z=\emptyset\})=n.
  \]
  Here $\dim_H$ denotes the Hausdorff dimension.
\end{theorem}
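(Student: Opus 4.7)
The plan is to construct a Cantor-type subset of $W$ of Hausdorff dimension equal to $n = \dim(G/\Gamma)$, all of whose points have two-sided $g^\R$-orbit bounded and staying at uniform positive distance from $Z$. The construction follows the quantitative non-divergence strategy of Kleinbock and Margulis, augmented by an avoidance condition for the null set~$Z$.

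First, I would exploit partial hyperbolicity to identify the unstable subgroup $U^+ = \exp \mathfrak p^+$ and write a small neighborhood of a chosen base-point $x_0 \in W$ as a product of local stable, neutral, and unstable pieces. Restricting attention to a small cube $Q_0$ in the unstable direction contained in $W$, I would build a nested sequence $Q_0 \supset \mathcal C_1 \supset \mathcal C_2 \supset \cdots$ of unions of sub-cubes by iterated subdivision on geometric scales $r_n = r_0 e^{-\lambda T_n}$, where $\lambda > 0$ reflects the expansion rate of $g^t$ on $U^+$. At stage $n+1$, a sub-cube $Q \subset \mathcal C_n$ is retained only if the orbit $g^{[-T_{n+1},T_{n+1}]}Q$ lies inside a fixed large compact set $K \subset G/\Gamma$ (the boundedness requirement) and stays at distance at least $\epsilon_n$ from $Z$ (the avoidance requirement).

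The control on how many sub-cubes survive at each stage comes from the Kleinbock--Margulis quantitative non-divergence theorem via their $(C,\alpha)$-good function technology. Because the polynomial growth of the matrix entries of $\Ad(g^t u)$ for $u \in U^+$ makes the relevant coordinate functions $(C,\alpha)$-good on $Q_0$, the proportion of parameters $u$ whose image under $g^{T_n}$ lies in a given set $E$ is controlled by $(\operatorname{vol}(E)/\operatorname{vol}(K))^\alpha$ up to a multiplicative constant. Taking $E$ to be the complement of $K$ (of small measure by choice of $K$) and $E$ to be an open neighborhood of $Z$ of measure $\eta_n \to 0$ (available since $Z$ is closed of Haar measure zero), I can ensure that the fraction of removed sub-cubes at each stage is bounded by some $\rho_n < 1$ whose product with the Cantor-construction branching ratios keeps the remaining set non-empty and of maximal Hausdorff dimension. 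A standard Cantor dimension lemma then gives $\dim_H \bigcap_n \mathcal C_n = \dim U^+$, and taking the product with analogous Cantor constructions in the stable and neutral directions (the latter handled by slow-divergence estimates for the centralizer direction, plus a symmetric argument for $g^{-\R}$) produces the desired subset of $W$ of Hausdorff dimension $n$.

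The main obstacle is the quantitative non-divergence estimate guaranteeing simultaneous avoidance of $Z$. The original Kleinbock--Margulis estimates target escape into the cusp via a distinguished family of linear functionals on lattices; here one needs the analogous estimate for hitting an arbitrary small neighborhood of a closed null set $Z$. The key point is that, on fixed coordinate patches, indicator functions of $\epsilon$-neighborhoods can be approximated by $(C,\alpha)$-good functions with controlled constants, so that the measure estimate degenerates only as $\eta_n \to 0$ and not catastrophically as one passes to deeper levels of the Cantor construction. Once this uniform estimate is in place, the delicate part is the bookkeeping: balancing $T_n$, $r_n$, $\epsilon_n$, and $\eta_n$ so that the combined loss at each level is $\rho_n \le \rho < 1$, and so that the resulting Cantor set has Hausdorff dimension equal to $\dim U^+$ and not strictly less.
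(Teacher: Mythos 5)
The paper does not prove Theorem~\ref{thm:kleinbock-margulis}; it is quoted verbatim as an external black-box result, with the citation pointing to the Kleinbock--Margulis paper on bounded orbits of nonquasiunipotent flows on homogeneous spaces. There is therefore no in-paper proof against which to compare your sketch.

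On the sketch itself: the architecture you propose (nested Cantor construction along the unstable horospherical subgroup, per-stage loss estimate, lower-bound lemma for Hausdorff dimension, then combine with the stable and neutral directions) does match the general shape of the actual Kleinbock--Margulis argument, but the key technical engine is misidentified. You invoke the $(C,\alpha)$-good function machinery of Dani--Margulis and then assert that ``the proportion of parameters $u$ whose image under $g^{T_n}$ lies in a given set $E$ is controlled by $(\operatorname{vol}(E)/\operatorname{vol}(K))^\alpha$.'' That is not what $(C,\alpha)$-goodness provides: it gives sub-level-set estimates of the form $\mu\{|f|<\epsilon\sup|f|\}\lesssim\epsilon^\alpha$ for polynomial-like functions, not equidistribution of the image of a cube into an arbitrary target set $E$. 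Moreover, the quantitative non-divergence framework is tailored to polynomial (unipotent) trajectories, whereas here $g^\R$ is partially hyperbolic and its conjugation action on $U^+$ is exponentially expanding, so the relevant constants degenerate as $T_n\to\infty$. What Kleinbock and Margulis actually use is exponential decay of correlations for the $g^\R$-action on $L^2_0(G/\Gamma)$: exponential mixing yields effective equidistribution of long expanded pieces of unstable leaves, which is exactly the per-stage bound needed to control escape to the cusp and entry into a thin neighborhood of $Z$. This is also why the hypotheses that $G$ has no compact factors and $\Gamma$ is irreducible appear --- they secure the spectral input --- while your sketch never uses either, a symptom that the argument as written is not on the right track.
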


Observe that if the flow $\flow_\R$ is ergodic, it is enough to assume
that the closed invariant set $Z \subset G/\Gamma$ be proper.

\begin{corollary}\label{cor:kleinbock-margulis}
  Under the conditions of Theorem~\ref{thm:kleinbock-margulis} the
  flow $(G/\Gamma,\flow_\R)$ has infinitely many different compact
  minimal invariant sets.
\end{corollary}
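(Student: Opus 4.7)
The plan is to prove the corollary by induction on the number of minimal sets, in essentially the same spirit as the proof of Theorem~\ref{expandingminimal}: every time $k$ distinct compact minimal invariant sets have been built, apply Theorem~\ref{thm:kleinbock-margulis} to their union to produce a bounded orbit that avoids all of them, and then extract a new minimal set from its closure.

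For the base case I would apply Theorem~\ref{thm:kleinbock-margulis} with $Z=\emptyset$ and $W=G/\Gamma$ to obtain a point $x_1\in G/\Gamma$ with $\overline{g^\R x_1}$ bounded. Since $G/\Gamma$ is locally compact, $\overline{g^\R x_1}$ is in fact compact and $g^\R$-invariant, so a standard Zorn's lemma argument applied to its nonempty closed $g^\R$-invariant subsets produces a compact minimal invariant set $K_1\subset\overline{g^\R x_1}$.

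For the inductive step, suppose pairwise distinct compact minimal invariant sets $K_1,\dots,K_k$ have been constructed. Set $Z_k:=K_1\cup\cdots\cup K_k$. This is closed, $g^\R$-invariant, and, being a finite union of compact subsets of the non-compact space $G/\Gamma$, it is a proper subset of $G/\Gamma$. By the standing assumption that the flow is ergodic (Remark~\ref{rem:3:1}), every proper closed invariant set has Haar measure zero; alternatively we may invoke the observation immediately after the statement of Theorem~\ref{thm:kleinbock-margulis} which allows $Z_k$ to be merely proper in the ergodic case. Applying Theorem~\ref{thm:kleinbock-margulis} with $Z=Z_k$ and $W=G/\Gamma$ produces a point $x_{k+1}$ whose forward orbit is bounded and whose orbit closure is disjoint from $Z_k$. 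Inside the compact invariant set $\overline{g^\R x_{k+1}}$, which by construction meets none of the $K_i$, Zorn's lemma again yields a compact minimal invariant subset $K_{k+1}$, necessarily distinct from each previous $K_i$. Iterating produces the desired infinite family.

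There is essentially no obstacle once Theorem~\ref{thm:kleinbock-margulis} is in hand: the theorem is tailored exactly to drive such an inductive construction. The only point that requires care is to justify that at each stage the set $Z_k$ satisfies the hypothesis of the theorem (measure zero, or proper in the ergodic case), which is immediate from ergodicity combined with the compactness of the $K_i$ inside the non-compact ambient space $G/\Gamma$.
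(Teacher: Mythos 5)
Your proof is correct and is exactly the ``immediate'' inductive argument the paper has in mind; the paper does not spell it out, and your reconstruction mirrors both the observation following Theorem~\ref{thm:kleinbock-margulis} (allowing $Z$ merely proper in the ergodic case) and the inductive scheme used in the proof of Theorem~\ref{expandingminimal}. One caveat: when you argue that $Z_k$ is proper ``being a finite union of compact subsets of the non-compact space $G/\Gamma$,'' you are implicitly assuming $\Gamma$ is non-cocompact. This is consistent with the paper's framing of Section~\ref{sec:5} (``In the non-compact, finite volume case\ldots''), but the hypotheses of Theorem~\ref{thm:kleinbock-margulis} as stated also allow $\Gamma$ cocompact, and the corollary is invoked for that case too in Proposition~\ref{prop:5:2}. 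When $G/\Gamma$ is compact, compactness of each $K_i$ no longer forces properness of $Z_k$; one needs in addition that the flow is not minimal, which does not follow from Theorem~\ref{thm:kleinbock-margulis} alone but does follow from Theorem~\ref{expandingminimal} (or directly from Lemma~\ref{induct}), since a partially hyperbolic flow carries a uniformly expanded invariant foliation. Once the $K_i$ are known to be proper, ergodicity (Remark~\ref{rem:3:1}, since the Haar measure has full support) gives that $Z_k$ has measure zero, and the induction proceeds exactly as you describe. A minor phrasing point: in the base case, compactness of $\overline{g^\R x_1}$ follows because the orbit is bounded, hence precompact in the locally compact space, not from local compactness alone.
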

 
\begin{proposition}\label{prop:5:2}
  Let $G$ be a connected semi-simple Lie group and $G/D$ a finite
  volume space. Assume that the flow $\flow_\R$ on $G/D$ is partially
  hyperbolic.  Then the flow $(G/\Gamma,\flow_\R)$ has infinitely many
  distinct minimal invariant sets supporting infinitely many
  $g_\R$-invariant and mutually singular ergodic probability measures.
\end{proposition}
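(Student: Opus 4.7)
The plan is to reduce to a setting where Corollary~\ref{cor:kleinbock-margulis} applies and then transfer the resulting minimal sets back to $G/D$. After the reductions of Remark~\ref{rem:3:1} and Proposition~\ref{prop:lattice_red}, I may assume that the flow is ergodic and that $D=\Gamma$ is a discrete lattice in a connected semisimple group $G$ with finite center. Let $K$ be the maximal connected compact normal subgroup of $G$ and $\pi\colon G\to\bar G:=G/K$ the projection. Since $K$ is compact and $\Gamma$ is discrete, a standard check shows that $\bar\Gamma:=\pi(\Gamma)$ is a discrete lattice in the semisimple compact-factor-free group $\bar G$, and that $G/\Gamma\to\bar G/\bar\Gamma$ is a smooth fibration with compact fibers. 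The induced one-parameter subgroup $\bar g^\R$ is still partially hyperbolic: since $\Ad(K)$ preserves a positive-definite inner product on $\mathfrak g_0$, the restriction of $\ad(X)$ to the ideal $\mathfrak k:=\operatorname{Lie}(K)$ has purely imaginary spectrum, so every non-imaginary eigenvalue of $\ad(X)$ descends to $\mathfrak g_0/\mathfrak k$.

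Next I dispose of the possible reducibility of $\bar\Gamma$. If $\bar\Gamma$ is already irreducible in $\bar G$, Corollary~\ref{cor:kleinbock-margulis} applies directly to produce countably many pairwise disjoint compact minimal invariant sets on $\bar G/\bar\Gamma$. Otherwise, by the standard structure theory for lattices in semisimple groups without compact factors there exist a finite-index sublattice $\Gamma'\subset\bar\Gamma$ and an almost-direct decomposition $\bar G=\bar G_1\cdot\bar G_2$ such that $\Gamma'=\Gamma_1'\times\Gamma_2'$ with each $\Gamma_i'$ an irreducible lattice in $\bar G_i$. Writing $X=X_1+X_2$ with $X_i\in\operatorname{Lie}(\bar G_i)$, the spectrum of $\ad(X)$ on $\mathfrak g_0/\mathfrak k$ is the union of the spectra of the $\ad(X_i)$, so the partial hyperbolicity of $X$ forces at least one summand, say $X_1$, to be partially hyperbolic on $\bar G_1$. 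Corollary~\ref{cor:kleinbock-margulis} then yields countably many pairwise disjoint compact minimal invariant sets $\{M_n\}$ for $X_1$ on $\bar G_1/\Gamma_1'$; the sets $M_n\times\bar G_2/\Gamma_2'$ are closed, $X$-invariant and pairwise disjoint in $\bar G/\Gamma'$, and each contains at least one minimal set for the combined flow.

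To conclude I push these sets forward along the finite cover $\bar G/\Gamma'\to\bar G/\bar\Gamma$ (minimal sets project to minimal sets, and this projection is at most $[\bar\Gamma:\Gamma']$-to-one on the family of minimal sets), obtaining countably many distinct compact minimal sets on $\bar G/\bar\Gamma$, and then pull them back along the compact-fiber projection $G/\Gamma\to\bar G/\bar\Gamma$: preimages of distinct minimal sets are disjoint closed invariant subsets of $G/\Gamma$, each of which contains at least one $g^\R$-minimal set, yielding countably many pairwise disjoint compact minimal invariant sets for $g^\R$ on $G/\Gamma$. The statement about ergodic measures is then immediate from Krylov--Bogoliubov applied to each minimal set, since probability measures with disjoint compact supports are mutually singular. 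The main technical obstacle lies in the second paragraph, where one must invoke the structure theory of lattices to achieve irreducibility while verifying that partial hyperbolicity is inherited by the factor on which Corollary~\ref{cor:kleinbock-margulis} is actually applied; this is precisely the additional difficulty that distinguishes Proposition~\ref{prop:5:2} from the compact case treated in Theorem~\ref{expandingminimal}.
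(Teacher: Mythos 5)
Your proposal follows essentially the same route as the paper's proof: reduce (via Remark~\ref{rem:3:1} and Proposition~\ref{prop:lattice_red}) to an ergodic flow with $G$ semi-simple without compact factors, finite center, and $D$ a lattice; apply Corollary~\ref{cor:kleinbock-margulis} directly in the irreducible case; otherwise pass to a finite-index product sublattice, isolate a factor on which the flow is partially hyperbolic, apply the corollary there, take products with the remaining factors, and transfer back through the finite cover. The only cosmetic differences are the order of the two reductions (you apply Proposition~\ref{prop:lattice_red} first and then quotient by the maximal compact normal subgroup, verifying that $\pi(\Gamma)$ is still a lattice because $\ker\pi$ is compact, whereas the paper first passes to $G/K$ via the closed subgroup $KD$ and only then invokes the lattice reduction), and the fact that you split $\bar G$ into two almost-direct factors rather than into all simple factors as the paper does; neither of these affects the substance of the argument. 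Your explicit observation that $\ad(X)|_{\mathfrak k}$ has purely imaginary spectrum, so partial hyperbolicity survives the quotient, is a useful detail the paper leaves implicit, and your count of at most $[\bar\Gamma:\Gamma']$ minimal sets over a given downstairs minimal set is a reasonable way to make the pushforward step precise; the paper instead phrases the final step in terms of pushing forward the invariant measures $\eta_n\times\mathrm{Leb}$.
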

 
\begin{proof}
  Let $G=K\times S$ be the decomposition of $G$ as the almost-direct
  product of a compact semi-simple subgroup $K$ and of a totally
  non-compact semi-simple group $S$, with both $K$ and $S$ connected
  normal subgroups. Since the flow $(G/\Gamma,\flow_\R)$ is partially
  hyperbolic $S$ is not trivial. Since $K$ is compact and normal, then
  $D'=DK=KD\subset G$ is a closed subgroup, and since $D\subset KD$,
  then $G/KD$ is of finite volume. Moreover, $(G/K)/DK\sim G/KD$ is of
  finite volume and $G'=G/K\sim S/S\cap K$ is semi-simple without
  compact factor with $D'\subset G'$ a closed subgroup with $G'/D'$ of
  finite volume and a projection $p:G/D\to G'/D'$.  Thus we may assume
  that $G$ is totally non compact, and by
  Proposition~\ref{prop:lattice_red}, that the center of $G$ is finite
  and that $D$ is a lattice.

  If $D$ is irreducible, then the statement follows immediately from
  Corollary~\ref{cor:kleinbock-margulis}.  Otherwise, let $G_i$, for
  $i\in \{1,\dots, l\}$, be connected normal semi-simple subgroups
  such that $G=\prod_i G_i$, $G_i\cap G_j=\{e\}$ if $i\neq j$, and let
  $\Gamma_i=\Gamma\cap G_i$ be an irreducible lattice in $G_i$, for
  each $i\in \{1,\dots, l\}$, with $\Gamma_0=\prod_i \Gamma_i$ of
  finite index in $\Gamma$.  Observe that
  $G/\Gamma_0\sim\prod_iG_i/\Gamma_i$. Let $p:G/\Gamma_0\to G/\Gamma$
  be the finite-to-one covering and let $p_i:G/\Gamma_0\to
  G_i/\Gamma_i$ be the projections onto the factors.  Let
  $\flow^{(0)}_\R$ be the flow induced by the one-parameter group
  $\flow_\R$ on $G/\Gamma_0$ and let $\flow^{(i)}_\R$ be the projected
  flow on $G_i/\Gamma_i$, for all $i\in \{1,\dots, l\}$. Since
  $\Gamma_i$ is an irreducible lattice in $G_i$, whenever
  $\flow^{(i)}_\R$ is partially hyperbolic we can apply
  Corollary~\ref{cor:kleinbock-margulis}. Since $\flow^{(0)}_\R$ is
  partially hyperbolic there is at least one $j \in \{1, \dots, l\}$
  such that $\flow^{(j)}_\R$ is partially hyperbolic. By
  Corollary~\ref{cor:kleinbock-margulis}, the flow $\flow^{(j)}_\R$
  has a countable family $\{K_n \vert n\in \N\}$ of distinct minimal
  subsets of $G_j/\Gamma_j$ such that each $K_n$ supports an invariant
  probability measure $\eta_n$. For all $n\in \N$, let us define $\hat
  \mu_n:=\eta_n\times Leb$ on $G/\Gamma_0$. By construction the
  measures $\hat \mu_n$ are invariant, for all $n\in \N$, and have
  mutually disjoint supports.  Finally, since the map $p:G/\Gamma_0\to
  G/\Gamma$ is finite-to-one, it follows that the family of sets
  $\{p(K_n\times\prod_{i\neq j}G_i/\Gamma_i) \vert n\in \N\}$ consists
  of countably many disjoint closed sets supporting invariant measures
  $ \mu_n:=p_*\hat\mu_n$. The proof of the Proposition is therefore
  complete.
\end{proof}

%%% Local Variables: 
%%% mode: latex
%%% TeX-master: "InvDist_affine"
%%% End: 

\def\altdiff{{\torusdiff_0}}
\def\torusdiff{F}
\def\transl{\tau}
\def\vectfield{\chi}
\def\End{\operatorname{End}}
\def\semiss{\mathfrak{s}} \def\unipt{\mathfrak{v}} 
\def\altmap{{\transl_0}}

\section{The general case}
\label{sec:6}

We may now prove our main theorem. We shall consider separately the
two cases of a flow and of an affine map.

\begin{proof}[Proof of Theorem~\ref{thm:Main}]
  By Remark~\ref{rem:3:1} we may suppose that the flow
  $(G/D,\flow_\R)$ is ergodic. Let also assume that $G$ is simply
  connected, by possibly pulling back $D$ to the universal cover of
  $G$. Recall that by Theorem~\ref{thm:3:2} the ergodic flow
  $(G/D,\flow_\R)$ projects onto the ergodic flow $\big(L/{q(D)},
  q(\flow_\R)\big)$, where $L$ is the Levi factor of $G$ and $q:G\to
  L$ the projection of $G$ onto this factor. Assume that the finite
  measure space $\big(L/{q(D)}\big)$ is not trivial. Then the
  statement of the theorem follows from Proposition~\ref{prop:4:2} if
  the flow $\big(L/{q(D)}, q(\flow_\R)\big)$ is quasi-unipotent and by
  Proposition~\ref{prop:5:2} if it is partially hyperbolic.

  If the finite measure space $\big(L/{q(D)}\big)$ is reduced to a
  point, then, using again Theorem~\ref{thm:3:2}, we have $G/D \approx
  R/R\cap D$, where $R$ is the radical of $G$. We obtain in this way
  that our original flow is diffeomorphic to an ergodic flow on a
  finite volume solvmanifold. By Mostow's theorem (see
  Theorem~\ref{thm:mostow}), a finite volume solvmanifold is
  compact. Hence the statement of the theorem follows from
  Theorem~\ref{expandingminimal} if the projected flow is partially
  hyperbolic and by Proposition~\ref{prop:4:4} if it is
  quasi-unipotent. The proof is therefore complete.
\end{proof}

\begin{proof}[Proof of Theorem~\ref{thm:Main_Affine}]
  In the course of the proof we shall use many times the
  Lemmata~\ref{lem:3homogflow:1} and~\ref{lem:3:3} which allow us to
  say that, whenever a quotient map or a suspension of an affine
  diffeomorphism has an infinite dimensional space of invariant
  distribution of a given order, so does the affine
  diffeomorphism. The same statement applies to measures. In the
  sequel, the term ``by standard arguments'' will refer to the
  application of this line of reasoning to infer that an affine
  diffeomorphism has an infinite dimensional space of invariant
  distribution (or measures).

  Let $\map_0=g A$ be an affine map of $G/D$. Let~$\Ggroup= \Hgroup
  =\Aut(G)\approx\Aut(\mathfrak g)$ and let~$\Hgroup$ act on~$\Ggroup$
  by inner automorphims (i.e.\ by conjugation). The groups~$\Ggroup$
  and~$\Hgroup$ are real algebraic groups and~$\Hgroup$ acts
  rationally on~$\mathcal A$. Let~$\Gsubgp= \Ad_G(G)< \Ggroup$.  Since
  for all $A\in \Ggroup$ and all $x\in G$ we have $A\circ
  \Ad_G(x)\circ A^{-1}= \Ad_G(Ax)$, the group~$\Gsubgp$ is normal
  in~$\Ggroup$ and stable under the action of~$\mathcal H$
  on~$\mathcal G$

  The epimorphism $\Ad_G\colon G\mapsto \Gsubgp \approx G/Z(G)$, maps
  the closed subgroup $D$ to the subgroup $ DZ(G)$. Let
  $\Dlattice=\overline {DZ(G)}$. Then, the map $\Ad_G$ induces a
  smooth submersion of $G/D$ onto the finite volume space
  $\Gsubgp/\Dlattice\approx G/\overline {DZ(G)}$.  This submersion
  intertwines the affine map $\map_0=gA $ of $G/D$ with the affine map
  $ \map=\Ad_G(g)\circ \operatorname{Int}_{ \Aut(G)}(A)\in
  \Aff_\Hgroup(\Ggroup)$, with $ \operatorname{Int}_{ \Aut(G)}(A)\in
  \Hgroup$ the conjugation by $A\in \Aut(G)$.

  By standard arguments, if the space of $\map$-invariant measures on
  $\Gsubgp/\Dlattice$ (respectively, the space of $\map$-invariant
  distributions on $\Gsubgp/\Dlattice$ of a given order) has infinite
  dimension, so does the space of $\map_0$-invariant measures on $G/D$
  (respectively, the space of $\map_0$-invariant distributions on
  $G/D$ of the same order).

  By Lemma~\ref{lemma:reduct_transl}, there exist a connected Lie
  group $\Gphi$ such that $\Gsubgp < \Gphi$, a quasi-lattice
  $\biglattice$ of $\Gphi$ containing $\Dlattice$, a non trivial
  one-parameter subgroup $\flow_\R \subset \Gphi$ and a finite cyclic
  group $\finitegp$ of affine diffeomorphism of $\Gphi/\biglattice$
  with the following properties:
  \begin{enumerate}
  \item the group $\finitegp$ commutes with the flow of the
    one-parameter group $\flow_\R$ by left translations on $\Gphi$;
    hence we obtain a quotient flow $(\finitegp\backslash \Gphi/
    \biglattice, \flow_\R)$ on the double coset space
    $\finitegp\backslash\Gphi/\biglattice$.
  \item the flow $(\finitegp\backslash \Gphi/\biglattice, \flow_\R)$
    is smoothly conjugate to a suspension of the affine map $\map$ on
    $\Gsubgp/\Dlattice$.
  \end{enumerate}
  
  The structure of the proof is now analogous to the proof of
  Theorem~\ref{thm:Main} and proceeds by analyzing the different cases
  for the dynamics of the flow $\flow_\R$ on the homogeneous space $
  \Gphi/\biglattice$ and then deriving the consequences for the affine
  map $\map$ of $\Gsubgp/\Dlattice$ and finally for the original map
  $\map_0$ of $G/D$.

  First we notice that we may assume that the flow
  $(\Gphi/\biglattice, \flow_\R)$ is ergodic: in fact the ergodic
  decomposition of this flow yields an infinite dimensional space of
  $\flow_\R$-invariant signed measures, which may be averaged under
  the action of $\finitegp$, yielding again an infinite dimensional
  space of $\flow_\R$-invariant and $\finitegp$-invariant signed
  measures; then we conclude, by standard arguments that the affine
  map $\map_0$ preserves infinitely many ergodic mutually singular
  invariant probability measures. Thus we assume that the flow
  $(\Gphi/\biglattice, \flow_\R)$ is ergodic.

  By Theorem~\ref{thm:3:2}, the ergodic flow
  $(\Gphi/\biglattice,\flow_\R)$ projects onto the ergodic flow
  $\big(\Levi/q(\biglattice), q(\flow_\R)\big)$, where $\Levi$ is the
  Levi factor of $\Gphi$ and $q\colon\Gphi\to \mathcal L$ the
  projection of $\Gphi$ onto this factor. The group of affine
  diffeomorphisms $\finitegp$ of $\Gphi/\biglattice$ projects under
  $q$ to a quotient finite cyclic group of affine diffeomorphisms
  $\finitegp\circ q$ of the quotient space $\Levi/q(\biglattice)$
  conmmuting with the ergodic flow $q(\flow_\R)$.

  \smallskip \emph{Case A: Non trivial Levi factor $\Levi$.}  Suppose
  that the finite measure space $\mathcal L/q(\biglattice)$ is not
  trivial. We distinguish two cases according to whether the flow
  $\big(\Levi/q(\widehat{\mathcal D}), q(\flow_\R)\big)$ is partially
  hyperbolic or quasi-unipotent.

  \smallskip \emph{Partially hyperbolic flow on the Levi factor.} By
  Proposition~\ref{prop:5:2}, the flow $q(\flow_\R)$ has infinitely
  many distinct compact invariant sets supporting infinitely many
  invariant and mutually singular ergodic probability measure.  Then,
  by \emph{standard arguments}, the covering flow $(\widehat{\mathcal
    N}/\biglattice, \flow_\R)$ has infinitely many
  $\flow_\R$-invariant and mutually singular ergodic probability
  measures $(\mu_i)_{i\in I}$. By averaging this collection of
  measures under the action of the finite cyclic group $\finitegp$ we
  obtain an infinite sub-collection of probability measures on the
  quotient space $\finitegp \backslash\widehat{\mathcal
    N}/\biglattice$ which are invariant and ergodic for the quotient
  flow $\flow_\R$. Then, by \emph{standard arguments}, we conclude
  that the affine diffeomorphism $\map$, first, and the affine
  diffeomorphism $\map_0$, next, have an infinite set of probability
  invariant measures.

  \smallskip \emph{Quasi unipotent flow on the Levi factor.}  If the
  flow $\big(\Levi/q(\biglattice), q(\flow_\R)\big)$ is
  quasi-unipotent, by Proposition~\ref{prop:4:2}, there exists
  infinitely many independent $q(\flow_\R)$-inva\-riant distributions on
  the space $\Levi/q(\biglattice)$ of Sobolev order $1/2$ which are
  also invariant under the finite cyclic group $\finitegp\circ q$. By
  standard arguments, we obtain that the dimension of the space of
  distributions of Sobolev order $1/2$ on $\Gphi/\biglattice$ which
  are simultaneously $\flow_\R$-invariant and invariant under
  $\finitegp$ is infinite. This is the same as saying that the space
  of distributions of Sobolev order $1/2$ on $\finitegp\backslash
  \Gphi/\biglattice$ which are invariant under the flow $\flow_\R$ is
  infinite. Then, by \emph{standard arguments}, we conclude that the
  affine diffeomorphism $\map$, first, and the affine diffeomorphism
  $\map_0$, next, have an infinite dimensional space of invariant
  distributions of Sobolev order~$1/2$.

  \smallskip \emph{Case B: Solvable $\Gphi$.}  Thus the theorem is
  proved if the finite measure space $\Levi/q(\biglattice)$ is not
  trivial.  In the opposite case, by Theorem~\ref{thm:3:2} we may and
  will assume that $\Gphi/\biglattice$ is a finite volume
  solvmanifold.

  Then the same is true of the manifold $\Gsubgp/\Dlattice$, since
  $\Gsubgp$ is a subgroup of $\Gphi$ hence solvable, and of the
  manifold $G/D$, since $\Gsubgp\approx G/Z(G)$. By Mostow's
  Theorem~\ref{thm:mostow}, a finite volume solvmanifold is compact.

  If the flow $(\Gphi/\biglattice, \flow_\R)$ is partially hyperbolic,
  so are the maps $\map$ and $\map_0$. Hence, in this case by
  Theorem~\ref{expandingminimal}, the map $\map_0$ admits infinitely
  many minimal sets and independent invariant ergodic probability
  measures.

  \def\diffeo{h} If, on the contrary, the flow $(\Gphi/\biglattice,
  \flow_\R)$ is quasi-unipotent then, by
  Proposition~\ref{prop:quasi-unip4:1} there exists a diffeomorphism
  $\tilde{\diffeo}\colon \Gphi \to N$ onto a nilpotent Lie group
  $\nilpt$. The diffeomorphism $\tilde{\diffeo}$ satisfies the
  following properties:
  \begin{enumerate}
  \item It induces a quotient diffeomorphism $\diffeo\colon
    \Gphi/\biglattice \to \nilpt/\Delta$ onto a compact nilmanifold
    $\nilpt/\Delta$ conjugating the flow $ \flow_\R$ on
    $\Gphi/\biglattice $ with a nilflow $u_\R$ on~$\nilpt/\Delta$.
  \item Restricted to $\biglattice$, the diffeomorphism
    $\tilde{\diffeo}$ is a group isomorphism of $\biglattice$
    onto~$\Delta$. Thus we may identify $\biglattice\approx \Delta$
  \item It conjugates the finite cyclic group $\finitegp$ to a finite
    cyclic group of affine diffeomorphisms $\finitegp'$ commuting with
    the flow~$u_\R$.
  \end{enumerate}
  It follows, in particular, that $\biglattice$ is a (co-compact)
  lattice in $\Gphi$.

  By Lemma~\ref{lemma:finite_order_transl}, the group $\finitegp'$ is
  generated by a translation by an element of the center of
  $\nilpt$. This fact has several consequences. First, the group
  $\finitegp'\Delta$ is discrete and the quotient space
  $\finitegp'\backslash \nilpt/\Delta$ coincides with the compact
  nilmanifold $\nilpt/\finitegp'\Delta$.  Second, as the group
  $\finitegp'$ operates without fixed points on $\nilpt/\Delta$, so
  does the group $\finitegp$ on $\Gphi/\widehat{\mathcal D} $. Thus
  the quotient space $\finitegp\backslash\Gphi/\biglattice$ is a
  smooth manifold\footnote{This is actually true, by construction of
    $\finitegp$, in a more general situation, e.g.\ whenever we may
    assume that the flow $\flow_\R$ commuting with $\finitegp$ is
    minimal.}.  Third, the group $\finitegp$ commutes with the lattice
  $\biglattice$ and the diffeomorphism $\tilde{\diffeo}$ is an
  isomorphism of the group $\finitegp\biglattice$ onto the group
  $\finitegp'\Delta$. In particular we may regard the
  diffeomorphism~$\diffeo$ as a diffeomorphism of the double coset
  quotient space $\finitegp\backslash\Gphi/\biglattice$ onto $
  \nilpt/\finitegp'\Delta$, conjugating the flow $\flow_\R$ with the
  flow $u_\R$.

  \smallskip \emph{Suppose $\nilpt$ is not Abelian}. If the connected
  nilpotent group $N$ is not Abelian, then, by the results of
  \cite{FF}, the flow $u_\R$ on the compact nilmanifold $
  \nilpt/\finitegp'\Delta$ admits infinitely many independent
  invariant distributions of Sobolev order $1/2$. It follows that the
  same is true for the flow $\big(\finitegp\backslash
  \Gphi/\biglattice, \flow_\R\big)$ and by standard arguments for the
  affine map $\map_0$ on $G/D$.

  \smallskip \emph{Suppose $\nilpt$ is Abelian}. We are left to
  consider the case where the connected nilpotent group $\nilpt$ is
  Abelian. Then the compact nilmanifolds $\nilpt/\Delta$ and
  $\nilpt/\finitegp'\Delta$ are tori. The flow of the one-parameter
  group $u_\R$ on these manifolds is conjugate, respectively, to the
  flows $(\Gphi / \biglattice, \flow_\R)$ and $(\finitegp \backslash
  \Gphi / \biglattice, \flow_\R)$.  In particular the manifolds $
  \Gphi / \biglattice$ and $\finitegp \backslash \Gphi / \biglattice$
  are tori, up to a diffeomorphisms.

  Recall that the flow $(\finitegp \backslash \Gphi/ \biglattice,
  \flow_\R)$ is the suspension of the affine diffeomorphism
  $(\Gsubgp/\Dlattice,\map)$. Thus the manifold
  $M:=\diffeo(\Gsubgp/\Dlattice)$ is a submanifold of the torus $
  \nilpt/\finitegp'\Delta$, such that the linear flow $u_\R$ has
  constant return time to $M$.  By Lemma~\ref{lem:4quasi-unip:1} the
  manifold $M$ is a subtorus $\T^k$ of $ N/\finitegp'\Delta$ and the
  return map to $M=\T^k$ is a translation on this torus. Thus
  $(\Gsubgp/\Dlattice,\map)$ is diffeomorphically conjugate to a torus
  translation $(\T^k, \tau)$.
  
  As $\biglattice$ is a lattice and $\Gphi/\biglattice$ a compact
  torus (up to a diffeomorphism), by construction of the groups
  $\Gphi$ and $\biglattice$, we have that the subgroup
  $\Dlattice<\biglattice$ is a lattice in the subgroup
  $\Gsubgp<\Gphi$.

  By definition we have $\Gsubgp\approx G/Z(G)$ and $\Dlattice
  =\overline{DZ(G)} $. However, as $\Dlattice$ is, by construction, a
  subgroup of the discrete group $\biglattice$, a lattice in
  ${\Gsubgp}$ we obtain that $\Dlattice =DZ(G) $, that is $DZ(G) $ is
  a closed subgroup of the solvable group~$G$. Thus, in the present
  situation, we have an affine diffeomorphism $\map_0$ of the compact
  solvmanifold $G/D$ inducing, via the submersion $\Ad_G\colon G/D\to
  G/DZ(G)$, an affine map $\map$ of the quotient space $G/DZ(G)$ which
  is conjugated by a diffeomorphism $\torusdiff\colon G/DZ(G)\to \T^d$
  to an ergodic translation of the torus $\T^d$.

  The minimal sets of $\map_0$ are compact subsets of $G/D$ surjecting
  onto $G/DZ(G)$, hence carrying a unique $\map_0$-invariant
  measure. Since $G/D$ is connected, either $\map_0$ is a minimal
  diffeomorphism of $G/D$ or there are infinitely may disjoint minimal
  sets of $\map_0$, and in particular infinitely $\map_0$-invariant
  independent probability measure on $G/D$. Thus we may assume that
  the diffeomorphism $\map_0$ is (uniquely) ergodic. This implies by
  Theorem~\ref{expandingminimal} that the diffeomorphism $\map_0$ is
  quasi-unipotent.

  By Corollary~\ref{cor:6gener-case:1} below, the diffeomorphism
  $\map_0$ either admits infinitely many invariant independent
  distributions or is smoothly diffeomorphic to a translation on a
  torus.

  The proof is therefore complete.
\end{proof}

For any manifold $M$ let $\vectfield(M)$ denote the space of vector
fields on $M$.  In what follows we identify the Lie algebra $\mathfrak
g$ of any Lie group $G$ with the space $\vectfield(M)^G$ of right
invariant vector fields on $G$. For any homogeneous space $G/D$, the
Lie algebra $\mathfrak g$ is identified with the subspace
$\vectfield(G/D)^G \subset \vectfield(G/D)$ given by the projections
on $G/D$ of the right invariant vector fields on $G$.

\begin{lemma}
  \label{lem:6gener-case:1}
  Let $G$ be a connected, simply connected, solvable Lie group, let
  $D<G$ be a lattice in $G$ and let $\map_0=u_0A_0$ be an ergodic
  affine quasi-unipotent diffeomorphism of~$G/D$. Assume that $Z(G)D$
  is a closed subgroup of $G$.  Let $\torusdiff\colon G/Z(G)D\to \T^d$
  be a diffeomorphism conjugating the map $\map$, induced by $\map_0$
  on $G/Z(G)D$, with an ergodic translation $\transl$ of $\T^d$.
  \begin{equation*}
     \iftikz
    \begin{tikzcd}
      G/D \ar{d} \ar{r}{\map_0}
      & G/D \ar{d}\\
      G/Z(G)D \ar{r}{\map} \ar{d}{\torusdiff}
      & G/Z(G)D \ar{d}{\torusdiff}\\
      \T^d \ar{r}{\transl} & \T^d
    \end{tikzcd}
    \else
    \text{Qui c'era une diagramma: aggiorna il tuo TeX!}
    \fi
  \end{equation*}
  Then there exists a structure of nilmanifold on $G/D$ (of degree of
  nilpotency at most $2$) with respect to which the map~$\map_0$ is
  affine and unipotent. More precisely, there exists a connected,
  simply connected, nilpotent Lie group $\nilpt$, a lattice $\Gamma <
  \nilpt$, a diffeomorphism $\altdiff\colon G/D \to\nilpt/ \Gamma $
  and an unipotent ergodic affine diffeomorphism $\transl_0\colon
  \nilpt/ \Gamma \to \nilpt/ \Gamma$ such that $ \altdiff \circ \map_0
  = \transl_0 \circ \altdiff$.
\end{lemma}
\begin{proof}
  \emph{Step 1.} Let $Z_0(G)$ the connected component of the identity
  of the center $Z(G)$ of $G$. The covering $G/Z(G)_0D\to G/Z(G)D$ is
  finite since $G/D$ is a compact solvmanifold, by Mostow's Theorem,
  which covers $G/Z(G)_0D$. Hence $G/Z(G)_0D$ is diffeomorphic to a
  torus. Thus, with no loss of generality, we shall assume that $Z(G)$
  is connected.

  Let $Z= Z(G)/(Z(G)\cap D)$. The group $Z$ acts freely on $G/D$ by
  left translations. Since the orbit space $G/Z(G)D$ is Hausdorff, and
  $G/D$ is compact, the orbits of $Z$ are compact. Hence $Z$ is a
  compact connected Abelian Lie group acting freely on
  $G/D$. Composing the projection $\pi\colon G/D\to G/Z(G)D$ with the
  diffeomorphism $\torusdiff\colon G/Z(G)D\to \T^d$, the map $p\colon
  G/D \to \T^d$ so obtained endows the space $G/D$ with the structure
  of a principal $Z$-bundle over $\T^d$.
  \begin{equation*}
    \iftikz
    \begin{tikzcd}
      G/D \ar{r}{\map_0} \ar{d}{p}
      &  G/D \ar{d}{p}\\
      \T^d \ar{r}{\transl} & \T^d
    \end{tikzcd}
    \fi
  \end{equation*}

  Let fix a connection $\omega$ for the principal bundle $(G/D, p)$.

  Denote by $\mathfrak z\approx \R^s$ the Lie algebra of the group
  $Z$.  The Lie algebra $\mathfrak z$ may be identified to the
  \emph{fundamental vertical vector fields} on $G/D$, generators of
  the left action of $Z(G)$ on $G/D$.

  For any ``constant'' (i.e.\ invariant under group translation)
  vector field $ X$ on $\T^d$ let $X^*$ be its horizontal lift for the
  connection $\omega$. We call such lifted vector fields $X^*$, the
  \emph{fundamental horizontal vector fields} (for the connection
  $\omega$).  Let $X_1^*, \dots, X_d^*$ be a basis of fundamental
  horizontal vector fields for the connection $\omega$ projecting to
  constant fields $ X_1, \dots, X_d$.  As the $X_i$'s commute, we have
  \[
  [X_i^*, X_j^*] = - \Omega(X_i^*,X^*_j)
  \]
  where $\Omega$ is the $\mathfrak z$-valued curvature $2$-form of the
  connection~$\omega$. We recall that, since~$Z$ is Abelian, the
  curvature form of any connection on $M$ is simply the differential
  of the $\mathfrak z$-valued connection form.

  Let $\widetilde \Omega$ the $\mathfrak z$-valued $2$-form on $\T^d$
  defined by $\widetilde \Omega( X_i, X_j) =\Omega(X_i^*,X^*_j)$, so
  that $\Omega= p^* \widetilde \Omega$.  The $2$-form $\widetilde
  \Omega$ is closed (since $p^*$ is injective and $\D \Omega = \D^2
  \omega=0$). Thus $ \widetilde \Omega$ is cohomologous to a constant
  $\mathfrak z$-valued $2$-form $\Omega_0$ on $\T^d$, that is, there
  is a $\mathfrak z$-valued $1$-form $\lambda$ on $\T^d$ such that $
  \widetilde\Omega = \Omega_0 +\D \lambda$. Let us define a new
  connection by $\omega'= \omega +p^*\lambda$. If $X_1', \dots, X_d'$
  are the horizontal lifts of the constant fields $ X_i$'s for the
  connection~$\omega'$, since $p_*X_i'=p_*X_i^*= X_i$ for all
  $i=1,\dots d$, we have
  \[
  \begin{split}
    [X_i', X_j'] &= - \Omega'(X'_i,X'_j) = - \D \omega' (X'_i,X'_j)\\
    &= -\D\omega (X'_i,X'_j) +\D p^*\lambda (X'_i,X'_j) \\
    &= - \Omega (X^*_i,X^*_j)+ p^*\D\lambda (X^*_i,X^*_j)\\
    &= - \widetilde \Omega ( X_i, X_j)+ \D\lambda ( X_i, X_j)=
    -\Omega_0( X_i, X_j)
  \end{split}
  \]
  Let $(V^*_\alpha)$ be a basis of fundamental vertical vector fields
  associated to a basis of $\mathfrak z$ denoted by the same
  letters. Then for all $\alpha=1,\dots$, as the the group $Z$ is
  Abelian, we have
  \[ [V^*_\alpha, X_j']=0.
  \]
  We conclude that the vector fields $X_1', \dots, X_d'$ and
  $(V^*_\alpha)$ on $G/D$ generate a $(d+s)$-dimensional nilpotent Lie
  algebra $\mathfrak n$ of degree of nilpotency $2$ at most. Let $N$
  be the simply connected, connected nilpotent group of Lie algebra
  $\mathfrak n$. The group $N$ operates locally faithfully on $G/D$
  via an action $\alpha\colon N \times G/D\to G/D$, whose generators
  are the vector fields $X_1', \dots, X_d'$ and $(V^*_\alpha)$.  As
  the sub-algebra $\mathfrak z$ is contained in~$\mathfrak n$, the
  universal cover $\widetilde Z$ of the group $Z$ is contained in
  $N$. The group $N/\tilde Z$ is isomorphic to~$\R^d$ via a mapping
  sending the generators $X'_i+\mathfrak z$ to standard generators
  of~$\R^d$. Since we have covering homomorphisms $\widetilde Z\to
  Z(G)\to Z$, the $\tilde Z$-orbits on $G/D$ coincide with the
  $Z(G)$-orbits, in particular they are closed.  It follows that the
  action $\alpha$ of $N$ on $G/D$ induces a quotient action $\bar
  \alpha \colon N/\tilde Z \times G/Z(G)D \to G/Z(G)D$ of the Abelian
  group $N/\tilde Z\approx \R^d$ on $ G/Z(G)D $. It is plain that the
  composition with the diffeomorphism $\torusdiff \colon G/Z(G)D \to
  \T^d$ yields an action of $N/\tilde Z\approx \R^d$ on $\T^d$ which
  is simply the action of $\R^d$ on $\T^d$ with generators $X_1$,
  \dots, $X_d$, i.e.\ the plain action $\R^d$ on $\T^d$ by
  translations. It follows that the action of $N$ (or $N/\tilde Z$) on
  the compact space $ G/Z(G)D $ is transitive.  Since the $\widetilde
  Z$-orbits are compact, we conclude that the action on $N$ on $G/D$
  is transitive.

  Fixing a point $x_0\in G/D$ and defining $\Gamma$ as the isotropy
  group $\{ n\in N \mid \alpha(n,x_0)=x_0\}$ of the point $x_0$, we
  obtain a diffeomorphism $ N/\Gamma \to G/D$ whose inverse will be
  denoted $\altdiff\colon G/D\to N/\Gamma $. We leave to the reader
  the easy verification that the induced quotient map $G/Z(G)D\to
  N/\tilde Z \Gamma$ coincides with the given diffeomorphism
  $\torusdiff$, via the identification $N/\tilde Z \approx \R^d$
  defined above. In particular the map $ G/D\to N/\tilde Z \Gamma
  =\T^d$ coincides with the principal bundle projection~$p$. We
  summarize the above construction with the following diagram:
  \begin{equation*}
    \iftikz
    \begin{tikzcd}%[column sep=small]
      G/D \ar{r}{\altdiff} \ar{d}{\pi} \ar{dr}{p}
      &   N/\Gamma  \ar{d} \\
      G/Z(G)D \ar{r}{\torusdiff} & N/\tilde Z\Gamma\approx \T^d
    \end{tikzcd}
    \fi
  \end{equation*}
  with
  \begin{equation}
    \label{eq:6gener-case:11}
    z F_0(xD) = F_0(z xD),  \quad\forall z\in Z(G),\quad \forall xD\in G/D.
  \end{equation}
  Henceforth the nilmanifold $N/\Gamma$ will be endowed with the above
  defined connection~$\omega'$ having (constant) curvature
  $\Omega_0$. To simplify notations these forms will be renamed
  $\omega$ and $\Omega$. We recall that for any fundamental horizontal
  fields $X^*, Y^*$ projecting to constant fields $ X$ and $ Y$ and
  any fundamental vertical vector field $V^*$ we have $[X^*,Y^*]=
  -\Omega( X, Y)$ and $[X^*,V^*]=0$.  \smallskip

  \noindent\emph{Remark.} The construction above depends on the 
  arbitrary choice of a primitive $\lambda$ of the exact form $\tilde
  \Omega -\Omega_0$. Clearly $\lambda$ is determined up to a closed
  one-form, i.e.\ up to a form $ \lambda_0 + \D f$, with $ \lambda_0$
  and $f$, respectively, a constant one-form and a smooth function on
  the torus $\T^d$. The effect of adding a constant one-form
  to~$\lambda$ consists in composing the map $F_0$ with a
  diffeomorphism $N/\Gamma \to N/\Gamma'$ induced by an automorphism
  of $N$ which projects to the identity automorphism of $N/\tilde
  Z$. Adding an exact one-form $\D f$ to $ \lambda$ results into
  composing the diffeomorphism $F_0$ with the fiber-wise
  diffeomorphism $ x\Gamma \mapsto \big(\exp f(x\tilde Z\Gamma)\big)
  x\Gamma$.

  Thus the group structure of $N$ is only determined up to these
  ambiguities.

  \medskip

  \emph{Step 2.}  Recall that the Lie algebra $\mathfrak g$ is
  identified with the space $\vectfield(G/D)^G$ of vector fields
  generating the left action of $G$ on $G/D$.  The push-forward map of
  vector fields ${\map_0}_* = (\D \map_0)\circ {\map_0}^{-1}$ induced
  by the affine diffeomorphism $\map_0=u_0A_0$ maps the Lie algebra
  $\vectfield(G/D)^G\approx \mathfrak g$ onto itself and it is easily
  identified with the automorphism of $\mathfrak g$ defined by
  $B_0=\Ad_G(u_0)\circ A_0$. By hypothesis the auto\-morphism~$B_0$ is
  quasi-unipotent.

  The center $Z(G)$ is a characteristic subgroup of~$G$, hence any
  automorphism of~$G$ restricts to an automorphism of $Z(G)$.
  Furthermore, the restriction of the automorphism $B_0$ to the
  sub-algebra $\mathfrak z$ coincides with $A_0$. Since
  $A_0(Z(G))=Z(G)$ and $A_0(D)=D$ the automorphism $A_0$ defines a
  quasi-unipotent automorphism of the torus $Z(G)/Z(G)\cap D$. It
  follows that the spectrum of $B_0$ restricted to $\mathfrak z$
  consists of roots of the unity.  Equivalently the spectrum of
  ${\map_0}_*\,$, restricted to the space of fundamental vertical
  vector fields, consists of roots of the unity.

  \smallskip

  \emph{Step 3.} Let $\altmap = \altdiff\circ \map_0 \circ
  \altdiff^{-1}$. The map $\altmap\colon N/\Gamma \to N/\Gamma$
  induces an ergodic translation~$\transl$ on the quotient torus
  $N/\tilde Z \Gamma$:
  \begin{equation*}
    \iftikz
    \begin{tikzcd}%[column sep=small]
      G/D \ar{r}{\altdiff} \ar{d} \ar{d}{\map_0}\ar[bend left]{rr}{p}
      & N/\Gamma \ar{d}{\altmap} \ar{r} & N/\tilde Z\Gamma \approx
      \T^d \ar{d}{\transl} \\ G/D \ar{r}{\altdiff}\ar[bend
      right]{rr}{p} & N/\Gamma \ar{r} & N/\tilde Z\Gamma \approx \T^d
    \end{tikzcd}
    \fi
    \quad\begin{matrix}
      \transl \circ p = p \circ \map_0\\
      \transl \circ \torusdiff =  \torusdiff  \circ\map
    \end{matrix}
  \end{equation*}

  Since for any $z\in Z(G)$ and any $xD\in G/D$ we have $\map_0(z xD)
  = A_0(z) \map_0( xD)$, and since by
  formula~\eqref{eq:6gener-case:11} the diffeomorphism $\altdiff$
  intertwines the actions of $Z(G)$ on the spaces $G/D$ and $
  N/\Gamma$, we obtain a similar identity for the diffeomorphim
  $\altmap$:
  \begin{equation}
    \label{eq:6gener-case:9}
    \altmap(z x\Gamma) = A_0(z) \altmap( x\Gamma), \qquad \forall
    z\in Z(G), \quad \forall x\Gamma\in  N/\Gamma,
  \end{equation}
  or, equivalently,
  \begin{equation*}
    (\altmap)_* V^*  =  A_0(V^*),
  \end{equation*}
  for any fundamental vector field $V^*$ on $N/\Gamma$.

  By definition constant vector fields $X$ on $N/Z(G)D$ are vector
  fields invariant by all translations, hence satisfying
  \[
  (\transl)_* X = X\,.
  \]
  Thus, for any fundamental horizontal vector field $X^*$ projecting
  to a constant vector field $X$, we have
  \begin{equation}
    \label{eq:6gener-case:15}
    (\transl_0)_* X^*  = X^* + \mu(X),
  \end{equation}
  with $\mu$ a smooth $1$-form on $N/Z(G)D$ with values in $\mathfrak
  z$. From this identity it follows that, for any two fundamental
  horizontal vector fields $X_1^*$ and $X_2^*$ projecting to constant
  vector fields $X_1$ and $X_2$, we have
  \[
  (\transl_0)_* [X_1^*,X_2^*] = [ X_1^* + \mu(X_1),X_2^* + \mu(X_2)] =
  [X_1^*,X_2^*] + \D \mu(X_1, X_2).
  \]
  Using the identity~\eqref{eq:6gener-case:9} and considering that
  $[X_1^*,X_2^*]$ equals the fundamental vertical vector field
  $-\Omega(X_1, X_2)$ we obtain the identity
  \[
  \D \mu = \Omega - A_0\circ \Omega.
  \]
  As the right hand side is a constant $2$-form on $N/\tilde Z\Gamma$
  and the left hand side and exact $2$-form, both terms are
  zero. Hence
  \begin{enumerate}
  \item the one-form $\mu$ is closed, and
  \item $A_0(V^*) = V^*$ for all $V^*\in [\mathfrak n,\mathfrak n]$.
  \end{enumerate}

  \smallskip

  \emph{Step 4.} Having studied the spectrum of the automorphism
  $B_0=u_0A_0u^{-1}\in\Aut(\mathfrak g)$, restricted to fundamental
  vertical vector fields (i.e.\ to $\mathfrak z$), we now proceed to
  consider the spectrum of the automorphism $B\in \Aut(\mathfrak
  g/\mathfrak z)$ induced by $B_0$ on $\mathfrak g/\mathfrak z$. As by
  previous remarks, the automorphism~$B$ will be identified with the
  restriction of the push-forward map ${\map}_*$ to the vector fields
  arising from the left action of $G/Z(G)$ on~$G/Z(G)D$.

  The Lie algebras $\mathfrak g\approx \vectfield(G/D)^G$ and
  $\mathfrak g/\mathfrak z\approx \vectfield(G/Z(G)D)^{G/Z(G)}$ are
  mapped by the push-forward maps $\altdiff_*$ and $\torusdiff_*$
  respectively to isomorphic sub-algebras $\altdiff_* (\mathfrak g)
  \subset\vectfield(N/\Gamma) $ and $\torusdiff_* (\mathfrak
  g/\mathfrak z) \subset \vectfield(N/\tilde Z\Gamma)$ of vector
  fields on $N/\Gamma$ and $N/\tilde Z\Gamma$ according to the
  following diagram
  \begin{equation*}
    \iftikz
    \begin{tikzcd}%[column sep=small]
      Y'\in \mathfrak g \ar{r}{\altdiff_*} \ar{d}{\pi_*} \ar{dr}{p_*}
      & \bar Y\in \chi(N/\Gamma) \ar{d}
      \\
      \hat Y\in \mathfrak g/\mathfrak z
      \ar{r}{\torusdiff_*} %\ar{d} {\hat A}
      & Y\in \chi(N/\tilde Z\Gamma) %\ar{d} {(\transl)_*}
    \end{tikzcd}
    \fi.
  \end{equation*}
 
  \smallskip The automorphism $B\in \Aut(\mathfrak g/\mathfrak z)$ is
  conjugated by the map $\torusdiff_*$ to an automorphisms of the
  sub-algebras $\torusdiff_* (\mathfrak g/\mathfrak z)$, still denoted
  by $B$, and coinciding with the push forward map $\transl_*$:
  \begin{equation*}
    \iftikz
    \begin{tikzcd}
      \mathfrak g/\mathfrak z \ar{r}{\torusdiff_*}
      \ar{d}[swap]{{\map}_*= B} & \torusdiff_* (\mathfrak g/\mathfrak
      z)\subset \vectfield(N/\tilde Z\Gamma) \ar{d}{{\transl}_*= B}
      \\
      \mathfrak g/\mathfrak z \ar{r}{\torusdiff_*} & \torusdiff_*
      (\mathfrak g/\mathfrak z)\subset\vectfield(N/\tilde Z\Gamma)
    \end{tikzcd}
    \fi
  \end{equation*}
  Thus we have, for any vector field $Y\in \torusdiff_* (\mathfrak
  g/\mathfrak z)$,
  \begin{equation}
    \label{eq:6gener-case:13}
    \transl_* Y = B(Y).
  \end{equation}
  Let $y=(y_1,\dots, y_d)$ be ``linear coordinates'' on the torus
  $N/\tilde Z\Gamma$, for which the lattice $\tilde Z\Gamma$ is the
  lattice $\Z^d$ in $\R^d$ and the (ergodic) translation $\transl$ is
  the translation modulo $\Z^d$ by the irrational vector $\alpha\in
  \R^d$. In these coordinates the differential of the $\transl$ is the
  identity and we obtain, for all vector fields $Y$ on $N/\tilde
  Z\Gamma$ and all $y\in N/\tilde Z\Gamma$
  \[
  (\transl)_*Y ( y) = Y (\transl^{-1}(y)).
  \]
  Thus, by the identity \eqref{eq:6gener-case:13}, we obtain that for
  all $Y\in \torusdiff_*(\mathfrak g/\mathfrak z)$ we have
  \begin{equation}
    \label{eq:6gener-case:4}
    Y (\transl^{-1}( y)) = B ( Y)(y), \quad \text{\ for all\ }
    y\in N/\tilde Z\Gamma.
  \end{equation}

  Let $Y_1, \dots, Y_d$ be a basis of $\torusdiff_*(\mathfrak
  g/\mathfrak z)$ and let $X_1, \dots, X_d$ be a basis of constant
  vector fields on $N/\tilde Z\Gamma$. Then we can write
  \begin{equation}
    \label{eq:6gener-case:5}
    Y_i= \sum_{j} H_{ij} X_j, \quad \text {\ for all\ } i=1,\dots d.
  \end{equation}
  with $H =( H_{ij})\colon N/\tilde Z\Gamma \to
  \operatorname{Gl}(\R^d) $ a smooth function on $N/\tilde
  Z\Gamma$. Writing the matrix valued function $H$ in Fourier series
  with respect to the coordinates $(y_i)$, we have
  \begin{equation}
    \label{eq:6gener-case:6}
    H(y) =
    \sum_{n \in \Z^d} h_n e_n(y),
  \end{equation}
  with $e_n(y) = \exp(2\pi \imath n\cdot y)$.  Denoting the matrix on
  the automorphism $B$ with respect to the basis $( Y_i)$ by the same
  letter $B$, the equation~\eqref{eq:6gener-case:4} reads
  \[
  \big( B - e_{n}(-\alpha)I\big) h_n =0
  \]
  or
  \[
  h_n^T (B^T - e_n(- \alpha) I) =0.
  \]
  This identity shows that the matrix $h_n^T$ vanishes on the range of
  $( B^T - e_{n} (-\alpha) I) $. Thus, in Fourier series of $H(y)$,
  there are at most $d$ coefficients $h_n$ which do not vanish; they
  correspond to the indices $n$ for which $e_{n} (-\alpha) $ is an
  eigenvalues of the matrix $ B$. Let $E_{n_1}$, \dots , $E_{n_k}$ be
  the generalized eigenspaces of $ B^T$ corresponding to the
  eigenvalues $ e_{n_1} (-\alpha) $ \dots $e_{n_k} (-\alpha)$. Then
  $\oplus_{ \ell=1}^k E_{n_\ell} = \C^d$ and $\ker h^T_{n_\ell}
  \supset \oplus_{m\neq \ell} E_{n_m}$. Since the matrix $H (y)$ is
  invertible for all $y\in N/\tilde Z\Gamma$, the kernel of
  $h^T_{n_\ell}$ cannot be larger than $\oplus_{m\neq \ell} E_{n_m}$
  and consequently we have the identity $\ker h^T_{n_\ell} =
  \oplus_{m\neq \ell} E_{n_m}$. It follows that the matrix
  $h_{n_\ell}$ is a linear surjective map of $\C^d$ onto $E_{n_\ell}$
  and a linear isomorphism when restricted to $E_{n_\ell}$. Hence $ B|
  E_{n_\ell} = e_{n_\ell}(-\alpha) I$. We have proved that the
  automorphism $ B$ of $\torusdiff_*(\mathfrak g/\mathfrak z) $ (or
  equivalently the automorphism $B$ of $\mathfrak g/\mathfrak z$) is
  semi-simple with spectrum $ \{ e_{n_\ell}(-\alpha)| \ell=1,\dots,
  k\} $ corresponding to eigenspaces $ E_{n_\ell}\subset
  \torusdiff_*(\mathfrak g/\mathfrak z) $, $ \ell=1,\dots, k $.

  Since $\alpha$ is a irrational number, none of the numbers
  $e_{n_\ell}(-\alpha)$ occurring in the spectrum of $B$ is a root of
  unity, if $n_\ell\neq 0$. From the results of \emph{Step 2}, we
  deduce that every eigenspace $E_{n_\ell}\subset \mathfrak
  g/\mathfrak z$ with $n_\ell\neq 0$ lift to an eigenspace
  $E'_{n_\ell}$ of the automorphism~$B_0$ of $\mathfrak g$. The
  subspace $E_0\subset \mathfrak g/\mathfrak z$ is an Abelian
  sub-algebra of $\mathfrak g/\mathfrak z$, mapped to constant vector
  fields by the conjugation $\torusdiff_*$.

  Chasing definitions, we conclude that the identity
  \begin{equation}
    \label{eq:6gener-case:14}
    \altmap_* \bar Y =e _{n_\ell}(-\alpha) \bar Y .
  \end{equation}
  holds true for all $\bar Y\in \altdiff_*( E'_{n_\ell} )$.  \medskip

  \emph{Step 5.}  Let $(Y_\ell)_{\ell=1,\dots,d}$ be a basis of the
  sub-algebra $\torusdiff_*(\mathfrak g/\mathfrak z) $ of eigenvectors
  of the automorphism $B\approx \transl_*$ with eigenvalues $
  \lambda_i=e_{n_{\ell}}(-\alpha)$, $\ell=1,\dots,d$. The elements $(
  Y_\ell)$ with $n_{\ell}\not= 0$ come in conjugate pairs. We may
  assume that elements $(Y_\ell)$ with $n_{\ell}= 0$ are real. Let
  $(Y_\ell^*)_{\ell=1,\dots,d}$, be the horizontal lifts of the fields
  $(Y_\ell)_{\ell=1,\dots,d}$ (then this set of elements of $\mathfrak
  g$ is closed under conjugation).

  Let $I_0$ be the set of indices $\ell=1,\dots,d$ such that
  $n_\ell=0$, and let $I^c_0$ its complement in the integer interval
  $[1,d]$. For $\ell \in I^c_0$ let $\bar Y_\ell\in \altdiff_*(
  E'_{n_\ell})$ be an eigenvector $\altmap_*$ projecting to $Y_\ell$,
  so that
  \begin{equation}
    \label{eq:6gener-case:17}
    \altmap_* \bar Y_\ell=e _{n_\ell}(-\alpha) \bar Y_\ell ,\quad \forall
    \ell\in I^c_0;
  \end{equation}
  for $ \ell\in I_0$, let $\bar Y_\ell$ be a generalised eigenvector
  of eigenvalue $1$ for $\altmap_*$ projecting to to the (constant)
  field $Y_\ell$; then there exist vertical fields so that $V_{\ell,
    1}$, $V_{\ell, 2}$,\dots, $V_{\ell, j_\ell}$, such that
  \begin{equation}
    \label{eq:6gener-case:24}
    (\altmap)_*^k \bar Y_\ell= \bar Y_\ell + k V_{\ell, 1}+ \tbinom{k}{2} V_{\ell, 2}
    +\dots + \tbinom{k}{j_\ell} V_{\ell, j_\ell},\quad \forall
    \ell\in I^c_0\,\,\,\forall
    k\in \N.
  \end{equation}

  Choose fundamental horizontal vector fields
  $(X^*_\ell)_{\ell=1,\dots,d}$ so that at the point $\Gamma\in
  N/\Gamma$ they coincide with the horizontal vectors
  $(Y_\ell^*(\Gamma))_{\ell=1,\dots,d}$. Then the vector fields
  $(X^*_\ell)_{\ell=1,\dots,d}$ project to constant vector fields
  $(X_\ell)_{\ell=1,\dots,d}$ which at the point $\tilde Z\Gamma\in
  N/\tilde Z\Gamma$ coincide with the vectors $(Y_\ell(\tilde
  Z\Gamma))$, i.e.\ we have $ Y_\ell(\tilde Z\Gamma) =X_\ell $ , for
  all $\ell=1,\dots,d$.

  With these choices, the formula \eqref{eq:6gener-case:5}, in virtue
  of the definition \eqref{eq:6gener-case:6}, becomes
  \[
  Y_\ell= e_{n_\ell} X_\ell, \quad \ell=1,\dots,d,
  \]
  and, consequently, we have
  \[
  Y^*_\ell = e_{n_\ell} X^*_\ell \quad \ell=1,\dots,d.
  \]
  From the identity~\eqref{eq:6gener-case:15} we obtain
  \begin{equation}
    \label{eq:6gener-case:16}
    (\transl_0)_* Y_\ell^*  = e_{n_\ell}(-\alpha)\big(Y_\ell^* + 
    \mu(Y_\ell)\big),
  \end{equation}
  for all $\ell=1,\dots,d$.

  Let $\nu$ be the $\mathfrak z$-valued one-form on the torus
  $N/\tilde Z\Gamma$ defined by
  \begin{equation}
    \label{eq:6gener-case:27}
    \bar Y_\ell = Y^*_\ell + \nu(Y_\ell)%= Y^*_\ell +e_{n_\ell}  \nu(X_\ell)
    , \quad \forall \ell=1,\dots,d.
  \end{equation}
  For $Y=Y_\ell$, using \eqref{eq:6gener-case:14} and the fact that
  $(\transl_0)_* $ restricted to vertical vectors coincides with the
  automorphism $A_0$, we obtain the identity
  \[
  \begin{split}
    (\transl_0)_* (\nu (Y))(x)&= (\D \transl_0)_{
      \transl_0^{-1}(x)}\big(\nu_{
      \transl^{-1}(x)}(Y_{\transl^{-1}(x)} )\big)= A_0 \big(\nu_{
      \transl^{-1}(x)}(Y_{\transl^{-1}(x)})\big) \\ &= A_0 \Big(
    (\transl_*\nu)_x (\transl_* Y)_x\Big) =e _{n_\ell}(-\alpha) A_0
    \Big( (\transl_*\nu)_x (Y_x)\Big)\,..
  \end{split}
  \]
  where, as usual, the push-forward of a one form by a diffeomorphism
  is defined as the pull-back by the inverse diffeomorphism: for any
  scalar one-form $\beta$ on the torus $N/\tilde Z\Gamma$ we define
  $\tau_* \beta=(\tau^{-1})^* \beta$ (so that
  $\beta_y(X_\ell)=\beta_{\tau^{-1}y}(X_\ell)$ for all
  $\ell=1,\dots,d$). Hence, from~\eqref{eq:6gener-case:16} and the
  above definition~\eqref{eq:6gener-case:27}, we derive
  \begin{equation}
    \label{eq:6gener-case:21}
    (\transl_0)_* \bar Y_\ell   = e_{n_\ell}(-\alpha) \big[Y_\ell^*  +
    \mu(Y_\ell) +   (A_0\circ\tau_*\nu) (Y_\ell))\big], \quad 
    \forall \ell=1,\dots,d.
  \end{equation}
  On the other hand, for $\ell\in I^c_0$, the same
  definition~\eqref{eq:6gener-case:27} together with the
  identity~\eqref{eq:6gener-case:17} yields
  \[
  (\transl_0)_* \bar Y_\ell = e_{n_\ell}(-\alpha) \big[ Y^*_\ell +
  \nu(Y_\ell)], \quad \forall \ell\in I^c_0.
  \]
  Comparing the two expressions above we obtain, for all $\ell\in
  I^c_0$, the identity $\mu(Y_\ell) = \nu(Y_\ell) -(A_0\circ\tau_*\nu)
  (Y_\ell) $, or, equivalently,
  \begin{equation}
    \label{eq:6gener-case:18}
    \mu(X_\ell) = \nu(X_\ell) -(A_0\circ \tau_*\nu) (X_\ell) , \quad \forall \ell\in I^c_0.
  \end{equation}
  
  For the case $\ell\in I_0$, we consider the following
  generalizations of the formulas~\eqref{eq:6gener-case:16}
  and~\eqref{eq:6gener-case:21}: for all $\ell=1,\dots,d$, we have
  \begin{equation*}
    (\transl_0)^k_* Y_\ell^*  = e_{n_\ell}(-k\alpha)\big(Y_\ell^* +
    \sum_{j=1}^{k}( A_0^{j-1}\circ \tau^{j-1}_*\mu )(Y_\ell),
  \end{equation*}
  and
  \begin{equation*}
    (\transl_0)^k_* \bar Y_\ell   = e_{n_\ell}(-k\alpha) \big[Y_\ell^*
    +\sum_{j=1}^{k} (A_0^{j-1}\circ  \tau^{j-1}_*\mu)(Y_\ell) +  
    (A^k_0\circ \tau^k_*\nu)(Y_\ell) \big].
  \end{equation*}
  For $\ell\in I_0$, the definition~\eqref{eq:6gener-case:27} and the
  formula~\eqref{eq:6gener-case:24} give
  \[
  (\altmap)_*^k \bar Y_\ell= \bar Y_\ell^* + \nu(Y_\ell) + k V_{\ell,
    1}+ \tbinom{k}{2} V_{\ell, 2} +\dots + \tbinom{k}{j_\ell} V_{\ell,
    j_\ell},\quad \forall \ell\in I^c_0;
  \]
  Taking differences and considering that for $\ell\in I_0$ we have
  $Y_\ell=X_\ell$ we obtain
  \[
  \sum_{j=1}^{k} (A_0^{j-1}\circ \tau^{j-1}_*\mu)(X_\ell)= \nu(X_\ell)
  -(A^k_0\tau^k_*\nu)(X_\ell) + k V_{\ell, 1}+ \tbinom{k}{2} V_{\ell,
    2} +\dots + \tbinom{k}{j_\ell} V_{\ell, j_\ell}.
  \]
  and for $k=1$
  \begin{equation}
    \label{eq:6gener-case:20}
    \mu(X_\ell)= \nu(X_\ell)
    -(A_0\circ \tau_*\nu)(X_\ell) +V_{\ell, 1}
  \end{equation}

  \medskip

  \emph{Step 6.} Define a new connection $\omega_0$ on $N/\Gamma$ by
  setting
  \[
  \omega_0= \omega -\nu.
  \]
  The horizontal lifts of a constant vector field $X$ with respect to
  this connection is now given by the formula
  \begin{equation*}
    \tilde X^* = X^*+ \nu (X).
  \end{equation*}
  From the identity~\eqref{eq:6gener-case:15} we obtain
  \[
  (\tau_0)_* (\tilde X^*) = (\tau_0)_* (X^*) + (A_0 \circ\tau_*\nu)(X)
  = X^*+ \mu(X)+ (A_0 \circ\tau_*\nu)(X)
  \]
  Hence, for all $\ell\in I^c_0$, the
  identity~\eqref{eq:6gener-case:18}, $ \mu(X_\ell) = \nu(X_\ell)
  -(A_0\circ \tau_*\nu) (X_\ell)$, nous donne
  \[
  (\tau_0)_* (\tilde X_\ell^*) = X^*+
  \mu(X_\ell)+A_0\tau_*\nu(X_\ell)= X_\ell^* +\nu(X_\ell)= \tilde
  X_\ell^*
  \]
  For all $\ell\in I^c_0$, the identity~\eqref{eq:6gener-case:20},
  yields by a similar computation,
  \[
  (\tau_0)_* (\tilde X_\ell^*) = X_\ell^*+ V_{\ell,1}
  \]
  We have shown that the diffeomorphism $\tau_0$ has a constant
  jacobian matrix with respect to the basis of vector fields $\{\tilde
  X^*_i, V^{*}_\alpha\mid \ell=1,\dots,d, \alpha =1,\dots s\}$ and
  that $(\tau_0)_*(\tilde X^*_\ell) =\tilde X^*_\ell + \sum
  c_{\ell,\alpha} V^{*}_\alpha $.  \medskip

  \emph{Step 7.}  The $\mathfrak z$-valued one-form $\nu$ is $Z(G)$
  invariant, hence for any constant vector field $X$ on the torus
  $N/\tilde Z\Gamma$ and every fundamental vertical vector field
  $V^{*}$ we have $[V^{*}, \nu(X)]=0$ and therefore $[ V^{*},\tilde
  X^*]=[ V^{*}, X^*+ \nu (X)]=0$.  It follows that
  \[
  (\tau_0)_*[\tilde X^*_i, \tilde X^*_j]= [\tilde X^*_i, \tilde
  X^*_j].
  \]
  On the other hand we have
  \[ [\tilde X^*_i, \tilde X^*_j] = -\Omega ( X_i, X_j)+ \D \nu ( X_i,
  X_j)=: C( X_i, X_j).
  \]
  Hence
  \[
  (\tau_0)_*[\tilde X^*_i, \tilde X^*_j]= (A_0 \circ \tau_* C) (X_i,
  X_j) = C (X_i, X_j)
  \]
  that is,
  \[
  A_0 \circ \tau_* C= C.
  \]
  As all the eigenvalues of the automorphism $A_0$ are roots of unity
  by taking a suitable power of the automorphism $A_0$, we obtain the
  identity $A^k_0 \circ \tau^k_* C= C$, with $A^k_0$ unipotent. It
  follows, by the ergodicity of all the iterates of $ \tau$, that the
  two-form $C$ is constant, i.e.\ that $\D \nu =0$. We observe that
  the vector fields $\{\tilde X^*_i, V^*_\alpha\}$ form the basis of a
  nilpotent Lie algebra isomorphic to $\mathfrak n$, since the only
  non-trivial commutation relations are given by
  \[ [\tilde X^*_i, \tilde X^*_j] = -\Omega ( X_i, X_j).
  \]
  Summarizing, by the remark at the end of \emph{Step 1}, we may
  compose the diffeomorphism $F_0$ by a diffeomorphism $h$ of $N$
  projecting to the identity diffeomorphism of $N/\tilde Z\Gamma$, so
  that, after lifting the vector fields $X_i$, $\tilde X_i^*$ and
  $V^*_\alpha$ to $N$, we have $h_*X_i=\tilde X_i^*$ and $h_*
  V^*_\alpha =V^*_\alpha$. The vector fields $\{\tilde X^*_i,
  V^*_\alpha\}$ define a new Lie group structure on~$N$, with Lie
  algebra $\mathfrak n$, with respect to which the diffeomorphism
  $\transl_0$ has ``constant jacobian matrix'' since we have
  \[
  (\transl_0)_* V^*_\alpha = A_0(V^*_\alpha).
  \]
  and since there exist $V_i\in \mathfrak z$
  \[
  (\transl_0)_*\tilde X^*_i = \tilde X^*_i + V_i , \quad i=1,\dots d.
  \]

  Thus, up to replacing $F_0$ with $h\circ F_0$ and the lattice
  $\Gamma$ by $h(\Gamma)$ and renaming the $\tilde X^*_i$ as $X^*_i$,
  we have proved that there exists
  \begin{itemize}
  \item a connected, simply connected, nilpotent Lie group $\nilpt$,
  \item a lattice $\Gamma < \nilpt$,
  \item a diffeomorphism $\altdiff\colon G/D \to\nilpt/ \Gamma $,
  \item a diffeomorphism $\transl_0\colon \nilpt/ \Gamma \to \nilpt/
    \Gamma$
  \item and a basis $\{X^*_i, V^*_\alpha\mid i=1,\dots d,
    \alpha=1,\dots ,s\}$ of the Lie algebra $\mathfrak n$ of $N$
  \end{itemize}
  such that
  \begin{itemize}
  \item The only non-trivial commutation relations in $\mathfrak n$
    are given by
    \[ [ X^*_i, X^*_j] =V_{ij}\, \quad\text{with } V_{ij}\in \mathfrak
    z :=\operatorname{span}\{ V^*_\alpha\};
    \]
  \item The diffeomorphism $\altdiff$ intertwines the maps $\map_0$
    and $\transl_0 $:
    \[
    \altdiff \circ \map_0 = \transl_0 \circ \altdiff;
    \]
  \item The diffeomorphism $\transl_0$ has ``constant jacobian
    matrix'' given
    \[
    (\transl_0)_* V^*_\alpha = A_0(V^*_\alpha), \quad
    (\transl_0)_*\tilde X^*_i = \tilde X^*_i + V_i , \quad i=1,\dots
    d, \alpha=1,\dots ,s.
    \]
    with $A_0$ a quasi-unipotent automorphism of the Abelian
    subalgebra $\mathfrak z$ and $V_i\in \mathfrak z$.
  \end{itemize}
  It is immediate to deduce that $\tau_0$ is an affine map of
  $\nilpt$, hence concluding the proof. In fact let $B_0$ be the
  quasi-unipotent automorphism of $\mathfrak n$ defined by
  $(\transl_0)_*$. Let $\tilde\transl_0$ be the lift to $\nilpt$ of
  the diffeomorphim $\transl_0$ and set $\tilde \transl_0 (e_\nilpt)=
  x_0$. The map $\sigma\colon x\in \nilpt \mapsto x_0^{-1}
  \tilde\transl_0 (x)\in \nilpt$ fixes the neutral element
  $e_{\nilpt}\in \nilpt$. Its tangent map is the automorphism of
  $\mathfrak n$ given by $B_1:=\operatorname{Int}(x_0) \circ B_0$,
  which is easily verified to be quasi-unipotent. Thus the map
  $B_1^{-1}\circ \sigma$ is a diffomorphism of $N$ fixing the neutral
  element $e_{\nilpt}$ and whose tangent map induces the identity map
  $\mathfrak n$. It follows that $\tilde \transl_0(x) = x_0B_1(x)$ for
  all $x\in \nilpt$. Hence the map $ \transl_0$ is an affine
  quasi-unipotent diffeomorphism of $\nilpt/\Gamma$. As the
  diffeomorphism $\map_0$ is ergodic so it is the diffeomorphism $
  \transl_0$. By Corollary~1 of Parry's paper~\cite{MR0260975}, a
  quasi-unipotent ergodic affine diffeomorphism of a nilmanifold is
  unipotent. This concludes the proof.
\end{proof}

\begin{lemma}
  \label{lem:6gener-case:3}
  Let $\nilpt$ be a connected, simply connected, nilpotent Lie group,
  let $\Gamma < \nilpt$ be a lattice in $\nilpt$ and $\map\colon
  \nilpt/ \Gamma \to \nilpt/ \Gamma$ a $\Ad$-unipotent affine
  transformation, with $\map(x\Gamma)=u A(x)\Gamma$.  Then the
  automorphism $A$ is unipotent with a rational generalized Jordan
  basis.
\end{lemma}

By a rational basis, we mean a basis $\mathcal B$ of $\mathfrak n$
such that every linear form on $\mathfrak n$ which is integral on
$\log \Gamma$ takes rational values on every $v\in \mathcal B$. A
rational basis $\mathcal B$ is a generalized Jordan basis for the
automorphism $A\in \Aut(\nilpt)$ if its matrix with respect to this
basis is upper (or lower) triangular.
\begin{proof}
  The Lemma is immediate if $\nilpt$ is Abelian, since in this case
  $\D \map = A$ and $A(\Gamma)=\Gamma$.

  Suppose, by induction, that the Lemma is true for all nilmanifolds
  $\nilpt/ \Gamma$ such that the degree of nilpotency of $\nilpt$ is
  less than $n$. Let the degree of nilpotency of $\nilpt$ be equal
  to~$n$.

  Let $\{ \mathfrak n^{(k)}\}$ denote the descending central series of
  the Lie algebra $\mathfrak n$ of $N$, defined by induction by
  $\mathfrak n^{(0)}= \mathfrak n$ and $\mathfrak n^{(k+1)}=
  [\mathfrak n^{(k)}, \mathfrak n]$ for all $k\in \N$, and let
  $\nilpt^{(k)}$ be the corresponding analytic groups.

  Then $\nilpt/\nilpt^{(n-1)}$ is a nilpotent group of degree of
  nilpotency~$(n-1)$. Since the induced affine diffeomorphism $\bar
  \map$ of the nilmanifold $\nilpt/\Gamma\nilpt^{(n-1)}$ is unipotent,
  by the induction hypothesis, the induced automorphism $\bar A\in
  \Aut(\mathfrak n / \mathfrak n^{(n-1)})$ is unipotent and has a
  generalized rational Jordan basis, that is a basis \[\bar {\mathcal
    B}_{n-1}=\{\bar X_{1,1}, \dots, \bar X_{1,\ell_1}, \bar X_{2,1},
  \dots,\bar X_{2,\ell_2}, \dots, \bar X_{k,1}, \dots, \bar
  X_{k,\ell_k}\},\] such that \[\bar A \bar X_{i,j}= \bar X_{i,j} +
  \sum_{k=1}^{j-1} c_{i,k}\bar X_{i,k},\] with $c_{i,k}\in \Q$ for all
  possible values of $i$ and $ k$.

  Let $ \mathcal B_{n-1}=\{X_{1,1}, \dots, X_{1,\ell_1}, X_{2,1},
  \dots, X_{2,\ell_2}, \dots, X_{k,1}, \dots, X_{k,\ell_k}\}$ a system
  of rational vectors projecting to $\bar {\mathcal B}_{n-1}$ under
  the natural map $\mathfrak n \to \mathfrak n / \mathfrak n^{(n-1)}$.
  Since the differential $\D {\map}$ of the $\Ad$-unipotent affine
  transformation $\map$ preserves the center $\nilpt^{n-1}$ and it
  coincides with the automorphism~$A$, there exists a rational Jordan
  basis $\mathcal C_{n-1}$ of $\mathfrak n^{(n-1)}$ for the
  restriction of the automorphism~$A$ to $\nilpt^{n-1}$.

  It is now immediate to check that $\mathcal B_{n}=\mathcal
  B_{n-1}\cup \mathcal C_{n-1}$ is a generalized rational Jordan basis
  for the automorphism $A$, and that $A$ is unipotent.
\end{proof}

\begin{lemma}
  \label{lem:6gener-case:4}
  Let $\nilpt$ be a connected, simply connected, nilpotent Lie group,
  let $\Gamma < \nilpt$ be a lattice in $\nilpt$ and $\map\colon
  \nilpt/ \Gamma \to \nilpt/ \Gamma$ a unipotent ergodic affine
  transformation.  Then either the diffeomorphism~$\map$ admits
  infinitely many invariant independent distributions of Sobolev order
  $1/2$ or is smoothly diffeomorphic to a toral translation.
\end{lemma}
\begin{proof}
  Recall that $N$ is an algebraic affine group. The exponential and
  logarithm map $\exp\colon \mathfrak n \to \nilpt $ and $\log\colon
  \nilpt \to \mathfrak n $ are polynomial maps, hence rational. By
  Lem\-ma~\ref{lem:6gener-case:3}, if $\map(x\Gamma)=u A(x)\Gamma$, then
  the automorphism $A$ is unipotent with a rational Jordan basis. It
  follows that there exists a one-parameter group $B= \{B^t\}_{t\in
    \R}$ of automorphisms of $\nilpt $ such that $B^1=A$. With respect
  to a rational Jordan basis of $\mathfrak n$, the automorphisms
  $\{B^t\}$ are polynomial in the variable $t$, hence they operate
  rationally on $\nilpt $. It follows that the semi-direct product
  $H=\nilpt \rtimes B$ is an algebraic nilpotent group. The set
  $\{(u,A)^n\mid n\in \Z\}$ is a subgroup of~$H$; let $T$ be its
  Zariski closure in $H$. As $A$ is unipotent and $\nilpt $ connected
  and simply connected, the Abelian group $T$ is connected for the
  Hausdorff topology. It follows that the group $T$ is a one-parameter
  group of $H$, necessarily of the form $\{(u(t), B^t)\}_{t\in T}$,
  such that $(u(1), B^1) = (u,A)$.

  Define $\Lambda=\{(\gamma, B^n)\mid n\in \Z, \gamma\in \Gamma\}$,
  Since $B^1=A$ and $A(\Gamma)=\Gamma$, the set $\Lambda$ is a
  discrete subgroup of $H$. In fact the space $H/\Lambda$ is a compact
  nilmanifold. We consider the nilmanifold $\nilpt/\Gamma$ as a
  submanifold of $H/\Lambda$ via the embedding given by the mapping
  $x\Gamma \mapsto (x, 1) \Lambda$.

  Let~$(\flow_t)_{t\in \R}$ be the flow on $H/\Lambda$ given by left
  translation by the one-parameter group~$\{B^t\}_{t\in \R}$. It is
  immediate to check that the first return of the flow
  $(\flow_t)_{t\in \R}$ to the submanifold $\nilpt/\Gamma$ occurs at
  time $t=1$ and the first return map coincides with the affine
  map~$\map$. Hence the flow $(\flow_t)_{t\in \R}$ on the compact
  nilmanifold $H/\Lambda$ is the suspension of the affine
  diffeomorphism~$\map$ and in particular it is ergodic.

  By Theorem~\ref{thm:flaminio-forni}, the flow $(\flow_t)_{t\in \R}$
  either admits infinitely many invariant independent distributions of
  Sobolev order $1/2$ or is diffeomorphic to an ergodic translation
  flow on a torus. This latter possibility occurs only if the
  nilmanifold $H/\Lambda$ is a torus; this is the case only if the
  automorphism $A$ is trivial and the group $N$ is Abelian, that is if
  the affine map $\map$ is a translation and on a torus. Thus, unless
  the diffeomorphism~$\map$ is a translation on a torus, the
  diffeomorphism~$\map$ admits infinitely many invariant independent
  distributions of Sobolev order $1/2$.
\end{proof}

\begin{corollary}
  \label{cor:6gener-case:1}
  Let $G$ be a connected, simply connected, solvable Lie group, let
  $D<G$ be a lattice in $G$ and let $\map_0=u_0A_0$ be an ergodic
  affine quasi-unipotent diffeomorphism of~$G/D$. Assume that $Z(G)D$
  is a closed subgroup of $G$.  Let $\torusdiff\colon G/Z(G)D\to \T^d$
  be a diffeomorphism conjugating the map $\map$, induced by $\map_0$
  on $G/Z(G)D$, with an ergodic translation $\transl$ of $\T^d$.  Then
  either the diffeomorphism~$\map_0$ admits infinitely many invariant
  independent distributions of Sobolev order $1/2$ or is smoothly
  diffeomorphic to a toral translation.
\end{corollary}
\begin{proof}
  By Lemma~\ref{lem:6gener-case:1} there exists a connected, simply
  connected, nilpotent Lie group $\nilpt$, a lattice $\Gamma <
  \nilpt$, a diffeomorphism $\altdiff\colon G/D \to\nilpt/ \Gamma $
  and an unipotent ergodic affine diffeomorphism $\transl_0\colon
  \nilpt/ \Gamma \to \nilpt/ \Gamma$ such that $ \altdiff \circ \map_0
  = \transl_0 \circ \altdiff$.

  Thus the statement follows from Lemma~\ref{lem:6gener-case:4}.
\end{proof}

%%% Local Variables: 
%%% mode: latex
%%% TeX-master: "InvDist_affine"
%%% End: 

\section{Open problems}
\label{sec:7}

We conclude the paper by stating some (mostly well-known) open
problems and conjectures on the stability and the codimension of
smooth flows.

\begin{conjecture} (A. Katok)
  % \marginpar{\tiny{Would Katok agree with this attribution??}}
  Every homogeneous flow (on a compact homogeneous space)
  % \marginpar{\tiny{finite volume?}}
  which fails to be stable (in the sense that the range of the Lie
  derivative on the space of smooth functions is not closed) projects
  onto a Liouvillean linear flow on a torus. In this case, the flow is
  still stable on the orthogonal complement of the subspace of toral
  functions (in other words, the subspace of all functions with zero
  average along each fiber of the projection).
  % \marginpar{\tiny{we should put here that the toral quotient is
  % assumed to be maximal or something like this}}
\end{conjecture}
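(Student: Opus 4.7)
The plan is to mirror the Levi-plus-solvable decomposition and the partially-hyperbolic-versus-quasi-unipotent dichotomy used for Theorem~\ref{thm:Main}, but to extract \emph{quantitative} (``tame'') cohomological information at each reduction step rather than merely the existence of invariant distributions. Using Remark~\ref{rem:3:1} I would first reduce to the ergodic case, and via Theorem~\ref{thm:3:2} together with a refinement of the pull-back/push-forward construction of Lemma~\ref{lem:3:3} (now transporting coboundary estimates rather than just distributions) the problem would split into quantitative stability questions on $L/\overline{q(D)}$, on $R/R\cap D$, and along the fiber. The aim is to show that in each factor, after quotienting out the maximal toral projection, the cohomological equation $\mathcal L_X u = f$ admits a tame solution with a fixed finite loss of derivatives; the only residual obstruction to stability would then sit in the toral quotient, where a Liouvillean frequency vector is precisely when the obstruction materializes.

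In the semisimple direction, Jacobson--Morozov (as in Proposition~\ref{prop:4:2}) embeds the quasi-unipotent direction in an $\sl_2(\R)$-triple, and the sharp Sobolev estimates of \cite{FF1} for the horocycle cohomological equation provide, on each nontrivial irreducible unitary $\hbox{\rm PSL}_2(\R)$-subrepresentation, closed range with a uniform finite loss of derivatives. Assembling these over the discrete Casimir spectrum of a cocompact lattice should give closed range on the orthogonal complement of the $g^\R$-invariant distributions; the partially hyperbolic semisimple factor would be handled analogously using exponential mixing and spectral gap à la Moore. In the solvable direction, Theorem~\ref{thm:mostow} puts us on a compact solvmanifold, Theorem~\ref{thm:auslander-starkov} identifies the quasi-unipotent piece with a nilflow, and Theorem~\ref{thm:flaminio-forni} (together with the underlying tame Sobolev estimates of \cite{FF}) already delivers precisely the ``stable on the complement of the toral functions'' statement of the conjecture in this case. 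The partially hyperbolic compact solvable case would need either parallel unitary-representation-theoretic inputs, or a direct argument based on uniform hyperbolicity of the adjoint action combined with the structure of Malcev splittings.

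The main obstacle, in my view, is the uniformity of the Sobolev constants across the direct sum decomposition of $L^2(G/D)$ into irreducible unitary $G$-subrepresentations: the tame estimate on each component carries constants depending on the representation-theoretic parameter (Casimir eigenvalue for $\sl_2(\R)$-pieces, Kirillov orbit data for nilpotent pieces), and closedness of $\mathcal L_X$ on $C^\infty(G/D)$ requires these constants to grow at most polynomially of a fixed degree in the Sobolev order. In the compact case the spectrum is discrete and this reduces to tracking constants in \cite{FF1} and \cite{FF} carefully, but the dependence of the loss of derivatives on how close an irreducible is to the trivial representation must be controlled uniformly, and this is exactly the point at which a Liouvillean toral factor would spoil the argument, consistent with the conjecture. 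A secondary, more structural difficulty is arranging these per-component estimates so as to produce an \emph{orthogonal} splitting of $C^\infty(G/D)$ transverse to the pull-back of smooth functions from the maximal toral quotient, which is what the second half of the conjecture demands.
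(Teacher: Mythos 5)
This statement is a \emph{conjecture}, attributed to A.~Katok, appearing in Section~\ref{sec:7} of the paper among open problems; the paper offers no proof of it. The discussion immediately following the conjecture lists precisely which special cases are known (accessible partially hyperbolic systems via Livshitz theory and Wilkinson; $\SL(2,\R)$ horocycle flows via \cite{FF1}; nilflows via \cite{FF}) exactly because the general statement is open. Your text is, appropriately, a research program with self-identified obstacles rather than a proof, but it should be labeled as such.

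As a sketch, the plan overreaches at two points and so does not close the gap. First, Theorem~\ref{thm:3:2} and Lemma~\ref{lem:3:3} are designed to transport \emph{invariant distributions} from a quotient upward; they do not propagate tame estimates, and, more seriously, stability on the base factors $\big(L/\overline{q(D)}, q(g^\R)\big)$ and $(G/\overline{G_\infty D}, g^\R)$ does not imply stability on $G/D$: one must also solve the cohomological equation in the fiber directions, and it is precisely there that the Diophantine/Liouvillean dichotomy (and the failure of uniform estimates) is expected to appear. There is no fiber-wise analogue of the Kleinbock--Margulis input, or of the Bargmann/Kirillov estimates, built into these reductions. Second, for the partially hyperbolic factor you invoke exponential mixing and spectral gap as though they produced a tame solution of $\mathcal L_X u = f$; they do not. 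Theorem~\ref{expandingminimal} and Proposition~\ref{prop:5:2} give countably many minimal sets and invariant measures — an obstruction, not a solvability result — and stability of general (non-accessible, non-$\R$-diagonalizable) partially hyperbolic homogeneous flows is not known. The obstruction you identify regarding uniformity of Sobolev constants across the unitary dual is real; together with the two points above, it explains why the statement remains a conjecture rather than a theorem of the paper.
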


As mentioned in the Introduction, hyperbolic and partially hyperbolic,
central isometric (or more generally with uniform sub-exponential
central growth), accessible systems are stable.

In the unipotent case, it is proved in \cite{FF1} that $SL(2,\R)$
unipotent flows (horocycles) on finite volume homogeneous spaces are
stable and in \cite{FF} that the above conjecture holds for nilflows.

\begin{problem} Classify all compact manifolds which admit uniquely
  ergodic flows with $(a)$ a unique invariant distribution (equal to
  the unique invariant measure) up to normalization; $(b)$ a finite
  dimensional space of invariant distributions.
\end{problem}

Example of manifolds (and flows) of type $(a)$ have been found by
A.~Avila, B.~Fayad and A.~Kocsard \cite{AFK}. Note that the Katok
(Greenfield-Wallach) conjecture implies that in all non-toral examples
of type $(a)$ the flow cannot be stable. Recently A.~Avila and
A.~Kocsard \cite{private} have announced that they have constructed
maps on the two-torus having a space of invariant distributions of
arbitrary odd dimension.  It is unclear whether examples of this type
can be stable:

\begin{problem} (M. Herman) Does there exists a stable flow with
  finitely many invariant distributions which is not smoothly
  conjugate to a Diophantine linear flow on a torus?
\end{problem}

The only known example which comes close to an affirmative answer to
this problem is given by generic area-preserving flows on compact
higher genus surfaces \cite{Forni97}, \cite{MMY}.  Such flows are
generically stable and have a finite dimensional space of invariant
distributions in every finite differentiability class (but not in the
class of infinitely differentiable functions).

%%% Local Variables: 
%%% mode: latex
%%% TeX-master: "InvDist_affine"
%%% End: 

\bibliography{InvDist_affine}
\bibliographystyle{amsalpha}
\end{document}